\documentclass[11pt,a4paper]{amsart}
\overfullrule=10pt

%Packages
\usepackage[initials]{amsrefs}
\usepackage{amssymb}
\usepackage[english]{babel}
\usepackage{enumerate}
\usepackage[T1]{fontenc}
\usepackage{graphicx}
\usepackage[utf8]{inputenc}
\usepackage{mathrsfs}
\usepackage{mathtools}
\usepackage{microtype}
\usepackage{multirow}
\usepackage{siunitx}
\usepackage{subcaption}
\usepackage{tikz}

\usepackage[hidelinks]{hyperref}
\usepackage[nameinlink]{cleveref}
\usepackage[initials]{amsrefs}

%Theorem environments
\newtheorem{theorem}{Theorem}[section]
\newtheorem{proposition}[theorem]{Proposition}
\newtheorem{lemma}[theorem]{Lemma}

\theoremstyle{definition}
\newtheorem{definition}[theorem]{Definition}

\theoremstyle{remark}
\newtheorem{remark}[theorem]{Remark}

\numberwithin{equation}{section}

%My commands
\newcommand{\map}[3]{ #1 \colon #2 \to #3 }
\newcommand{\restr}[2]{ \left.#1\right\rvert_{#2}}
\newcommand{\ad}[2]{% \repeat already defined
    #1^{\foreach \n in {1,...,#2}{\sharp}}
}

%%% Sets %%%
\newcommand{\R}[0]{ \mathbb{R} }

\newcommand{\Z}[0]{ \mathbb{Z} }
\newcommand{\N}[0]{ \mathbb{N} }
\newcommand{\C}[0]{ \mathbb{C} }

%%% Paired delimiters %%%
\DeclarePairedDelimiter{\abs}{\lvert}{\rvert}

\DeclarePairedDelimiter{\jb}{\langle}{\rangle}

%Math operators
\DeclareMathOperator{\arccot}{arccot}
\DeclareMathOperator{\arcosh}{arcosh}
\DeclareMathOperator{\diag}{diag}
\DeclareMathOperator{\re}{Re}
\DeclareMathOperator{\sech}{sech}
\DeclareMathOperator{\sgn}{sgn}
\DeclareMathOperator{\supp}{supp}

\begin{document}

\title{Solitary Gravity-Capillary Water Waves with Point Vortices}
\author{Kristoffer Varholm}
\address{Department of Mathematical Sciences, Norwegian University of Science and Technology, 7491 Trondheim, Norway}
\email{kristoffer.varholm@math.ntnu.no}
\thanks{The author acknowledges the support of the project Nonlinear Water Waves by the Research Council of Norway (Grant No. 231668). The author would also like to thank the referee for several helpful suggestions that helped improve the manuscript.}

\subjclass[2010]{Primary 35Q31; Secondary 35C07, 76B25}

\begin{abstract}
We construct small-amplitude solitary traveling gravity-capillary water waves with a finite number of point vortices along a vertical line, on finite depth. This is done using a local bifurcation argument. The properties of the resulting waves are also examined: We find that they depend significantly on the position of the point vortices in the water column.
\end{abstract}

\maketitle

\section{Introduction}
    The steady water-wave problem concerns two-dimensional water waves propagating with constant velocity and without change of shape. Historically, the focus has mainly been on irrotational waves, which are waves where the vorticity\footnote{Informally, the vorticity describes (twice) the velocity at which an infinitesimal paddle wheel placed in the fluid will rotate.}
    \[
        \omega \coloneqq \nabla \times w = v_x - u_y
    \]
    of the velocity field $w=(u,v)$ is identically zero. One reason for this is Kelvin's circulation theorem \cites{Johnson1997,Lighthill1978}, which says that a flow which is initially irrotational will remain so for all time, as long as it is only affected by conservative body forces (e.g. gravity). Another reason is mathematical, as the velocity field can then be written as the gradient of a harmonic function; the velocity potential. This enables the use of powerful tools from complex- and harmonic analysis, and the problem can be reduced to one on the boundary in a number of different ways \cites{Babenko1987,Nekrasov1921}. An important class of such waves are the Stokes waves, which are periodic waves that rise and fall exactly once every minimal period. The Stokes conjecture on the nature of the so-called Stokes wave of greatest height fueled research on waves throughout the 20th century, and would not be fully resolved until 2004 (see the survey \cite{Toland1996} and \cite{Plotnikov2004}, which settled the convexity of this wave).
    
    More recently, however, there has been renewed interest in rotational waves. There are several situations where such waves are appropriate, as effects like wind, temperature or salinity gradients can all induce rotation \cite{Mei1984}. Rotational waves can be markedly different from irrotational waves: For instance, in rotational waves it is possible to have internal stagnation points and critical layers of closed streamlines known as cat's eye vortices \cite{Ehrnstrom2012}.
    
    The first result on rotational waves came surprisingly early, in the beginning of the 1800s with \cite{Gerstner1809} (for a more modern exposition, see \cite{Constantin2001}). There, Gerstner gave the first, and still the only known, explicit (nontrivial) gravity-wave solution to the Euler equations on infinite depth. Although significant because it is an exact solution, it is viewed as more of a mathematical curiosity, even today (see \cite[Chapter~4.3]{Constantin2011}). Much later, in \cite{Dubreil-Jacotin1934}, came the first existence result for small-amplitude waves with quite general vorticity distributions. A vorticity distribution is a function $\map{\gamma}{\R}{\R}$ such that
    \[
        \Delta \psi = \gamma(\psi),
    \]
    where $\psi$ is the relative stream function (which, unlike the velocity potential, is still available for rotational waves, but is not harmonic). A sufficient, but not necessary, condition for such a vorticity distribution to exist is that the wave has no stagnation points. Several improvements have been made to the existence result of Dubreil-Jacotin, but it was not until the pioneering article \cite{Constantin2004} that large waves were constructed, using global bifurcation theory. This article sparked mathematical research into rotational waves.
    
    The use of a semi-hodograph transform in \cite{Dubreil-Jacotin1934} and \cite{Constantin2004}, and the corresponding deep-water result in \cite{Hur2006}, means that the resulting waves cannot exhibit critical layers. Since then, small-amplitude waves with constant vorticity and a critical layer have been constructed in \cite{Wahlen2009}, and later in \cite{Constantin2011a} with a different approach that allows for waves with overhanging surface profiles (there is numerical evidence for the existence of such waves, e.g. \cite{Vanden-Broeck1996}, but this is still an open problem). A reasonable next step is that of waves with an affine vorticity distribution, whose existence was shown in \cites{Ehrnstrom2011,Ehrnstroem2015}. Spurred by the above results there has also been interest in studying the properties and dynamics of these waves below the surface \cites{Wahlen2009,Ehrnstrom2012}. This had been done for linear waves in \cite{Ehrnstrom2008}. Several other avenues have also been considered: We mention heterogeneous waves both with \cites{Henry2014,Walsh2014a} and without \cite{Walsh2009,Escher2011} surface tension, waves with discontinuous vorticity \cite{Constantin2011b}, a variational approach \cite{Burton2011} and Hamiltonian formulation with center manifold reduction \cite{Groves2008}. Existence of large amplitude waves with constant vorticity and a critical layer was established in \cite{Matioc2014}, in the presence of capillary effects. There is also a forthcoming result for pure gravity waves \cite{Constantin2014}, using an entirely different approach.
    
    Common for all the previously mentioned works on rotational waves is the feature that the vorticity is supported on the entire fluid domain (due to the assumption of the existence of a vorticity distribution). Recently, gravity-capillary waves with compactly supported vorticity were constructed in \cite{Shatah2013}, on infinite depth. This includes small- and large-amplitude periodic waves with a point vortex, and small-amplitude solitary waves with either a point vortex or vortex patch. By a point vortex we mean that the vorticity is given by  a \(\delta\)-function, while we use vortex patch to mean that the vorticity is locally integrable and compactly supported. The waves with a point vortex are the simplest form of waves with compactly supported vorticity, and are in a sense ``almost irrotational''.
    
    In this paper, which is based on \cite{Varholm2014}, we extend the existence result for solitary small-amplitude waves with a point vortex to finite depth, and also give both qualitative and quantitative properties for these waves. The main approach to showing existence follows that of \cite{Shatah2013}, but we also treat the natural generalization of waves with several point vortices along a vertical line, and show existence for all but exceptional configurations of vortices. Finally, by finding an explicit expression for the rotational part of the stream function, we give some explicit expressions for the small-amplitude periodic waves with a point vortex on infinite depth that were constructed in \cite{Shatah2013}.
    
    An outline of the paper is as follows: In \Cref{section:formulation} we formulate the problem, and in  \Cref{section:functional_analytic_setting} we give the functional-analytic setting for this formulation. Then, in \Cref{section:local_bifurcation} we prove existence of small solutions, and give some properties for these. \Cref{section:several_point_vortices} treats the extension to several point vortices. The final section, \Cref{section:infinite_depth}, contains the explicit expressions for periodic waves on infinite depth.
\section{Formulation}
    \label{section:formulation}
    Under the assumption of inviscid (absence of viscosity) and incompressible (constant fluid density) flow, the governing equations of motion are the so-called incompressible Euler equations. For describing water waves on the open sea, these are realistic assumptions \cites{Johnson1997,Lighthill1978}, and standard. We will further assume two-dimensional flow under the influence of gravity, where the Cartesian coordinates $(x,y)$ describe the horizontal and vertical direction, respectively. Then the equations read
    \begin{equation}
        \begin{aligned}
            \label{eq:incompressible_euler_full}
            w_t + (w \cdot \nabla)w &= -\nabla p - g e_2,& &\text{(Conservation of momentum)}\\
            \nabla \cdot w &= 0,& &\text{(Conservation of mass)}
        \end{aligned}
    \end{equation}
    where $w=(u,v)$ is the velocity of the fluid, $p$ is the pressure distribution and $-ge_2 = (0,-g)$ is the constant gravitational acceleration\footnote{The constant $g$ is approximately $\SI{9.8}{m/s^2}$, varying by less than $\SI{0.4}{\percent}$ on the Earth's suface (see \cite{Hirt2013}).}.

    For convenience we place, at time $t$, the flat bottom at
    \[
        \{(x,y) \in \R^2 : y = -h\}
    \]
    and the surface at
    \[
        \{(x,y) \in \R^2: y = \eta(x,t)\},
    \]
    where $\eta$ describes the deviation of the free boundary. We assume that $\eta(\cdot,t)$ is bounded, continuous and strictly bounded below by $-h$. It should be emphasized that, due to the free boundary assumption, the function $\eta$ is a priori unknown; determining it is part of the problem.
    
    In addition to \Cref{eq:incompressible_euler_full}, we require boundary conditions to match our domain. In order to model the bottom being impermeable, we will demand that
    \begin{align}
        \restr{v}{y=-h} &= 0 && \text{(Kinematic boundary condition at bottom)}, \notag
        \intertext{with which we mean that $v(x,-h,t)=0$ for all $x$ and $t$. Next, we impose the condition}
        u\eta_x + \eta_t &= v && \text{(Kinematic boundary condition at surface)} \notag
        \intertext{at the surface. This equation is what connects the free boundary to the fluid, and is equivalent to demanding that particles at the surface will remain there. We also require that}
        \restr{p}{y=\eta} &= -\alpha^2 \kappa(\eta), &&\text{(Dynamic boundary condition)} \label{eq:young_laplace}
    \end{align}
    where $\alpha^2 > 0$ describes the surface tension and $\kappa$ is the nonlinear differential operator
    \[
        \kappa(\eta)  \coloneqq \left(\frac{\eta_x}{\jb{\eta_x}}\right)' = \frac{\eta_{xx}}{\jb{\eta_x}^3},
    \]
    yielding the curvature of the surface. The symbol $\jb{\cdot}$ denotes the Japanese bracket defined through $x \mapsto (1+\abs{x}^2)^{1/2}$. \Cref{eq:young_laplace} is known to physicists as the Young--Laplace equation, and states that the pressure difference across a fluid interface (in this case water/air) is proportional to its curvature.
       
    Note that in the lower limit $\alpha^2 = 0$, the dynamic boundary condition in \Cref{eq:young_laplace} corresponds to the assumption of constant pressure on the surface, but we will require that $\alpha^2$ be strictly positive. The proof of \Cref{thm:existence_point_vortex}, for example, relies upon the assumption that $\alpha^2>0$.
    
    \subsection{The vorticity equation}
        By taking the curl of \Cref{eq:incompressible_euler_full}, one obtains after some simple calculations that
        \begin{equation}
            \label{eq:vorticity_transport_equation_full}
            \omega_t + \nabla \cdot (\omega w)=0,
        \end{equation}
        which states that the vorticity $\omega$ is transported by the vector field $w$. Due to this, it is natural to expect that if the vorticity consists of a point vortex at some time, then it will remain a point vortex at all future times, and be transported with the flow. It should be emphasized that, for now, this is not justified by \Cref{eq:vorticity_transport_equation_full}; the multiplication of $\omega$ with $w$ is not well defined, as $w$ will not be smooth at the point vortex. Thus, we will have need of a weaker form of the equation. We remind the reader of the fundamental solution of the Poisson equation.
        
        \begin{proposition}[Newtonian potential]
            \label{prop:delta_vorticity}
            The distribution $\Gamma \in L_\textnormal{loc}^2(\R^2)$ defined by
            \[
                \Gamma(x,y) \coloneqq \frac{1}{4\pi}\log(x^2 + y^2)
            \]
            satisfies
            \[
                \nabla^\perp \Gamma(x,y) \coloneqq (-\Gamma_y,\Gamma_x)(x,y) = \frac{1}{2\pi} \frac{(-y,x)}{x^2 + y^2}
            \]
            and
            \[
                \Delta\Gamma = \nabla \times \nabla^\perp\Gamma = \delta.
            \]
        \end{proposition}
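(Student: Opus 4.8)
The plan is to work entirely at the level of distributions, reducing everything to elementary calculus away from the singularity together with a careful limiting argument at the origin. First I would confirm that $\Gamma$ really defines a distribution: writing $r^2 = x^2 + y^2$, one has $\int_{B_1(0)} \bigl(\log(x^2+y^2)\bigr)^2 \, dA = 8\pi \int_0^1 (\log r)^2 \, r \, dr < \infty$, so indeed $\Gamma \in L^2_{\mathrm{loc}}(\R^2)$, and $\Gamma$ is smooth on $\R^2 \setminus \{0\}$. A direct differentiation there gives $\Gamma_x = \tfrac{1}{2\pi} x / r^2$ and $\Gamma_y = \tfrac{1}{2\pi} y/r^2$, whence $\nabla^\perp\Gamma = (-\Gamma_y, \Gamma_x) = \tfrac{1}{2\pi}(-y,x)/r^2$ pointwise off the origin. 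Because this field is $O(1/r)$ near the origin, it lies in $L^1_{\mathrm{loc}}(\R^2)$, so the first displayed identity will follow as soon as I verify that these pointwise derivatives coincide with the distributional ones.

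To check that, I would test $\Gamma_x$ against an arbitrary $\varphi \in C_c^\infty(\R^2)$ by integrating over $\R^2 \setminus B_\varepsilon(0)$ and letting $\varepsilon \to 0$. Integration by parts yields $-\int_{\R^2 \setminus B_\varepsilon} \Gamma \, \varphi_x \, dA$ plus a boundary contribution over $\partial B_\varepsilon$; since $\Gamma = \tfrac{1}{2\pi}\log r$ is only logarithmically singular while $\partial B_\varepsilon$ has length $2\pi\varepsilon$, that boundary term is $O(\varepsilon \log \varepsilon) \to 0$, and dominated convergence (using $\Gamma \in L^1_{\mathrm{loc}}$) controls the interior integral. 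This gives $\langle \Gamma_x, \varphi \rangle = -\langle \Gamma, \varphi_x \rangle$ with $\Gamma_x$ the stated $L^1_{\mathrm{loc}}$ function, and the same computation handles $\Gamma_y$.

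For $\Delta\Gamma = \delta$, the key point is that $\Gamma$ is harmonic on $\R^2 \setminus \{0\}$, so by definition $\langle \Delta\Gamma, \varphi \rangle = \int_{\R^2} \Gamma \, \Delta\varphi \, dA = \lim_{\varepsilon \to 0} \int_{\R^2 \setminus B_\varepsilon(0)} \Gamma \, \Delta\varphi \, dA$. On this domain Green's second identity together with $\Delta\Gamma = 0$ collapses the integral to the boundary terms $\int_{\partial B_\varepsilon}(\Gamma \, \partial_n \varphi - \varphi \, \partial_n \Gamma) \, ds$, where $n$ is the unit normal pointing toward the origin. The term involving $\Gamma \, \partial_n\varphi$ is again $O(\varepsilon \log \varepsilon)$ and vanishes, while on $\partial B_\varepsilon$ one has $\partial_n \Gamma = -\tfrac{1}{2\pi\varepsilon}$, so $-\int_{\partial B_\varepsilon} \varphi \, \partial_n\Gamma \, ds$ is exactly the average of $\varphi$ over the circle and therefore tends to $\varphi(0)$ by continuity. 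Hence $\langle \Delta\Gamma, \varphi \rangle = \varphi(0) = \langle \delta, \varphi \rangle$, and the identity $\Delta\Gamma = \nabla \times \nabla^\perp\Gamma$ is immediate since $\nabla \times (-\Gamma_y, \Gamma_x) = \Gamma_{xx} + \Gamma_{yy}$. I expect the only real obstacle to be this limiting step at the origin: keeping the orientation of the inward normal straight so that the surviving boundary term produces $+\varphi(0)$ rather than $-\varphi(0)$, and confirming that the logarithmic boundary contributions genuinely decay instead of leaving behind a stray constant.
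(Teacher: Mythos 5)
Your proof is correct, and all the delicate points are handled properly: the $L^2_{\mathrm{loc}}$ bound, the vanishing $O(\varepsilon\log\varepsilon)$ boundary terms, the sign of the inward normal giving $\partial_n\Gamma = -\tfrac{1}{2\pi\varepsilon}$ on $\partial B_\varepsilon$, and the identification of the surviving term with the circular average of $\varphi$. The paper itself states this proposition without proof, as a reminder of the classical fundamental solution of the Poisson equation, and your argument is precisely the standard excision-and-Green's-identity proof that this citation implicitly rests on, so there is no divergence of approach to report.
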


         If $\omega$ is of the form
        \[
            \omega(t) = \delta_{(x_0(t),y_0(t))},
        \]
        then we deduce from \Cref{prop:delta_vorticity} that $w$ is of the form
        \[
            w(x,y,t) =\frac{1}{2\pi}\frac{(y_0(t)-y,x-x_0(t))}{(x-x_0(t))^2 + (y-y_0(t))^2} + \hat{w}(x,y,t),
        \]
        where $\hat{w}$ satisfies $\nabla \cdot \hat{w} = 0$ and $\nabla \times \hat{w} = 0$, and is therefore smooth in space (see the discussion before \Cref{eq:velocity_in_terms_of_stream_function}). As the first term, which we may think of as the part of $w$ generated by the point vortex, is singular, divergence free and odd around $(x_0(t),y_0(t))$, it is not unreasonable to think that the dynamics of the point vortex should depend only on $\hat{w}$. In other words, that the path $t \mapsto (x_0(t),y_0(t))$ along which the point vortex moves should satisfy
        \begin{equation}
            \label{eq:vorticity_transport_equation_weak}
            (\dot{x}_0,\dot{y}_0) = \hat{w}.
        \end{equation}
        
        This can indeed be made rigorous. In \cite[Theorems 4.1 and 4.2]{Marchioro1994} it is proved that if one considers initial data consisting of a vortex patch converging in the sense of distributions to a point vortex, then the weak solutions of the vorticity transport equation converge to a moving point vortex in an appropriate sense. Moreover, the position of this point vortex satisfies \Cref{eq:vorticity_transport_equation_weak}. Thus, we will allow for point vortices, as long as they are propagated in the fluid as in \Cref{eq:vorticity_transport_equation_weak}.
    
    \subsection{Traveling waves}
        \label{section:traveling_waves}
        We now assume that there are functions $\tilde{w},\tilde{p}, \tilde{\eta}$, depending only on space, and a constant velocity $c \in \R$ such that
        \begin{align*}
            w(x,y,t) &= \tilde{w}(x-ct,y),\\
            p(x,y,t) &= \tilde{p}(x-ct,y),\\
            \eta(x,t) &= \tilde{\eta}(x-ct)
        \end{align*}
        for all relevant $x,y$ and $t$. Positive and negative $c$ then correspond to waves moving in the positive and negative $x$-directions, respectively. In the new \emph{steady} variables $(\tilde{x}, \tilde{y})=(x-ct,y)$, after dropping the tildes, our equations read
        \begin{align}
            (w\cdot \nabla)w -cw_x &= -\nabla p - g e_2, &&\text{(Conservation of momentum)} \label{eq:incompressible_euler_steady_momentum}\\
            \nabla \cdot w &= 0, &&\text{(Conservation of mass)} \notag
        \end{align}
        with boundary conditions
        \begin{align}
            v &=0,    &&\text{at $y=-h$}, && \text{(Kinematic)}\label{eq:boundary_condition_steady_bottom}\\
            (u-c)\eta'&=v,    &&\multirow{2}{*}{at $y = \eta(x)$,} && \text{(Kinematic)}\label{eq:boundary_condition_steady_kinematic}\\
            p&=-\alpha^2 \kappa(\eta),    && && \text{(Dynamic)}\label{eq:boundary_condition_steady_dynamic}
        \end{align}
        on the now time-independent domain
        \[
            \Omega(\eta) \coloneqq \{(x,y) \in \R^2 : -h < x < \eta(x)\}.
        \]
        We call the problem of finding $w, p$ and $\eta$ such that these equations are satisfied the \emph{steady water-wave problem}. Note also that the vorticity equation given in \Cref{eq:vorticity_transport_equation_weak} reduces to
        \begin{equation}
            \label{eq:vorticity_transport_equation_weak_steady}
            (c,0) = \hat{w}(x_0,y_0)
        \end{equation}
        for a point vortex centered at $(x_0,y_0) \in \Omega(\eta)$.
        
    \subsection{The Zakharov--Craig--Sulem formulation}
        \label{section:zakharov_craig_sulem}
        It turns out that it is possible to reduce the water-wave problem to an entirely one-dimensional one on the surface in a clever way. This is known as the Zakharov--Craig--Sulem formulation, and was first introduced by Zakharov in \cite{Zakharov1968}, and then later put on a firmer mathematical basis in \cites{Craig1992, Craig1993}. The original formulation relies on the fluid being irrotational, but it is in fact sufficient that this holds near the surface. This is where the compact support of the vorticity comes in.

        Suppose that we have solved the steady water wave problem for some $w, p, \eta$. It is then convenient to split the velocity $w$ as
        \[
            w = \hat{w} + W,
        \]
        where $\hat{w}$ is irrotational, that is, $\nabla \times \hat{w}=0$ and $\nabla \times W = \omega$. We also assume that both $\hat{w}$ and $W$ are divergence free. For us, $W$ will be known.
        
        Although we will allow for $\omega$ to be a singular distribution, the vector field $\hat{w}=(\hat{u},\hat{v})$ will be assumed to at least be in the Sobolev space $H^1(\Omega(\eta))^2$, and $W=(U,V)$ to be at least $L_\textnormal{loc}^1(\Omega(\eta))$. By the assumption of $\nabla \cdot \hat{w}=\nabla \cdot W = 0$, the differentials
        \[
            \hat{v}\,dx - \hat{u}\,dy, \quad V\,dx - U\,dy
        \]
        on $\Omega(\eta)$ are closed. Hence, as $\Omega(\eta)$ is simply connected, these differentials are exact by generalizations of the Poincaré lemma \cite[Theorems 2.1 and 3.1]{Mardare2008}. Thus, there are stream functions $\hat{\psi} \in H_{\textnormal{loc}}^2(\Omega(\eta)), \Psi \in L_\textnormal{loc}^1(\Omega(\eta))$ (by \cite[Corollary 2.1]{Deny1953--54}), determined uniquely modulo constants by $\hat{w}$ and $W$, such that
        \[
            \hat{w} = \nabla^\perp \hat{\psi}, \quad W = \nabla^\perp \Psi.
        \]
        Moreover, by the assumed curl of these vector fields, the function $\hat{\psi}$ is harmonic, and $\Psi$ satisfies $\Delta \Psi = \omega$. In particular, $\hat{\psi}$ is smooth, and so is $W$ outside the support of $\omega$.
        
        By the above, we thus have that
        \begin{equation}
            \label{eq:velocity_in_terms_of_stream_function}
            w = \nabla^\perp(\hat{\psi} + \Psi)
        \end{equation}
        holds for the velocity field $w$. Suppose now that $W$ and $\Psi$ are chosen such that $\Psi = 0$ at the bottom. Then the boundary condition in \Cref{eq:boundary_condition_steady_bottom} translates to $\hat{\psi}$ being constant along the bottom. Since $\hat{\psi}$ is unique modulo constants, we may as well take this condition to be
        \[
            \restr{\hat{\psi}}{y=-h} = 0
        \]
        instead.
        
        We will now apply the assumption that $\supp\omega \subseteq \Omega(\eta)$ is compact. This means that a velocity potential for $w$ exists on any simply connected domain not containing $\supp{\omega}$. \Cref{eq:incompressible_euler_steady_momentum} can then in turn be used to show that
        \[
            \nabla\left(-cu + \frac{1}{2}\abs{w}^2 + p + gy\right) = 0
        \]
        holds on any such domain. Hence, in particular, we obtain the surface Bernoulli equation
        \begin{equation}
            \label{eq:bernoulli}
            c(\hat{\psi}_y + \Psi_y) + \frac{1}{2}\abs{\nabla\hat{\psi} + \nabla \Psi}^2 -\alpha^2\kappa(\eta) + g\eta = C,\quad\text{at $y = \eta(x)$},
        \end{equation}
        where $C$ is a real constant. Here we have inserted the boundary condition for the pressure at the surface, \Cref{eq:boundary_condition_steady_dynamic}. We will set \(C=0\), since we are looking for localized waves. This can be seen by letting $|x| \to \infty$ in \Cref{eq:bernoulli}. We now need the following formal definitions to proceed, which will be specified later on.
        
        \begin{definition}[Harmonic extension operator]
            \label{def:harmonic_extension_operator}
            Given $\eta$, we define the harmonic extension operator $H(\eta)$ as the operator mapping a function $\map{\zeta}{\R}{\R}$ to the harmonic function $\map{\hat{\psi}}{\Omega(\eta)}{\R}$ satisfying
            \begin{align*}
                \hat{\psi}(\cdot, \eta(\cdot)) &= \zeta,\\
                \hat{\psi}(\cdot,-h) &= 0.
            \end{align*}
        \end{definition}

        \begin{definition}[Dirichlet-to-Neumann operator]
            \label{def:dirichlet_to_neumann_operator}
            Given $\eta$, we define the Dirichlet-to-Neumann operator $G(\eta)$ as the operator mapping Dirichlet data to non-normalized Neumann data; that is, the operator defined by
            \[
                G(\eta)\zeta \coloneqq (-\eta',1) \cdot \restr{\nabla[H(\eta)\zeta]}{y=\eta}
            \]
            for functions $\map{\zeta}{\R}{\R}$.
        \end{definition}
        
        With \Cref{def:harmonic_extension_operator} in mind, define $\map{\zeta}{\R}{\R}$ by
        \begin{align}
            \label{eq:velocity_potential_trace}
            \zeta \coloneqq \hat{\psi}(\cdot,\eta(\cdot)),
        \end{align}
        that is, the trace of $\hat{\psi}$ on the surface. By our assumptions, then, we have
        \[
            \hat{\psi} = H(\eta)\zeta,
        \]
        and we will use this to reformulate \Cref{eq:bernoulli} in a way that only involves $\zeta$ and $\Psi$. Note that
        \begin{align*}
            \zeta' &= \hat{\psi}_x + \eta' \hat{\psi}_y,\\
            G(\eta)\zeta &= -\eta'\hat{\psi}_x + \hat{\psi}_y,
        \end{align*}
        where the right-hand side is evaluated at $y = \eta(x)$. Inverting these relations and inserting them in \Cref{eq:bernoulli} yields
        \begin{equation}
            \begin{multlined}
                \label{eq:zakharov_main}
                c\left[\frac{\eta'\zeta'+G(\eta)\zeta}{\jb{\eta'}^2} + \Psi_y\right]\\+\frac{(\zeta' + (1,\eta')\cdot\nabla\Psi)^2 +(G(\eta)\zeta+(-\eta',1)\cdot\nabla\Psi)^2}{2\jb{\eta'}^2}\\+ g\eta -\alpha^2\kappa(\eta)=0,
            \end{multlined}
        \end{equation}
        and in a similar fashion, one obtains
        \begin{equation}
            \label{eq:zakharov_kinematic}
            c\eta' + \zeta' + (1,\eta')\cdot\nabla \Psi=0
        \end{equation}
        from the kinematic boundary condition in \Cref{eq:boundary_condition_steady_kinematic}. \Cref{eq:zakharov_kinematic} can be integrated once to yield
        \begin{equation}
            \label{eq:zakharov_kinematic_integrated}
            c\eta + \zeta + \Psi = 0,
        \end{equation}
        where we have assumed decay at infinity. We emphasize that the function $\Psi$ and its derivatives are evaluated at $y = \eta(x)$ in \Cref{eq:zakharov_main,eq:zakharov_kinematic,eq:zakharov_kinematic_integrated}, which we suppress for readability. \Cref{eq:zakharov_main,eq:zakharov_kinematic_integrated} form the Zakharov--Craig--Sulem formulation, which we will combine with a suitable vorticity equation. One may note that the pressure, $p$, has been eliminated from the formulation entirely.
        
        \begin{remark}
            \Cref{eq:zakharov_main} is slightly different than the equation used in \cite{Shatah2013}. The equation in \cite{Shatah2013} can be obtained by inserting the kinematic boundary condition, \Cref{eq:zakharov_kinematic}, into \Cref{eq:zakharov_main}.
        \end{remark}
\section{Functional-analytic setting}
    \label{section:functional_analytic_setting}
    We now focus on proving the existence of a family of small amplitude and small velocity traveling waves with vorticity consisting of a point vortex situated on the $y$-axis. In other words, solutions with vorticity of the form
    \[
        \omega = \varepsilon \delta_\theta,
    \]
    where $0 < \abs{\varepsilon} \ll 1$, $\theta \in (0,1)$ and where we have defined
    \[
        \delta_\theta \coloneqq \delta_{(0,-(1-\theta) h)}.
    \]
    The constant $\theta$ then corresponds to the relative position of the point vortex above the bottom, and the parameter $\varepsilon$, describing the strength of the vortex, will be used as the bifurcation parameter.
    
    We will from here on always assume that $\eta$ is such that $\eta(0) > -(1-\theta)h$, which prevents the surface from touching the point vortex. Furthermore, we will also assume that $\eta(0) < (1-\theta)h$. The reason for this is purely technical (as we will see after \Cref{prop:phi_properties}). For the purpose of accounting for these assumptions, define the set
    \begin{equation}
        \label{eq:definition_of_surface_profile_set}
        \Lambda_\theta \coloneqq \{\eta \in BC(\R) : \eta > -h, \abs{\eta(0)} < (1-\theta)h\},
    \end{equation}
    whose intersection $\Lambda_\theta \cap H^s(\R)$ is open in $H^s(\R)$ for any $s > 1/2$, by the Sobolev embedding $H^s(\R) \hookrightarrow BC(\R)$.
    
    The next proposition describes the stream function that we will use for the rotational part of the velocity; the counterpart of the function $\mathbf{G}$ in \cite{Shatah2013}. While we could use a similar stream function on finite depth, it is more beneficial to work with one that is tailored for finite depth.
 
    \begin{proposition}[Stream function]
        \label{prop:phi_properties}
        Let $\eta \in \Lambda_\theta$ and define $\map{\Phi}{\Omega(\eta)}{\R}$ by
        \[
            \Phi(x,y) \coloneqq \frac{1}{4\pi} \log{\left(\frac{\cosh(\pi x/h) +\cos(\pi(y/h-\theta))}{\cosh(\pi x/h) + \cos(\pi (y/h + \theta))}\right)}.
        \]
        Then $\Phi$ defines a regular distribution, and
        \begin{equation}
            \label{eq:point_vortex_laplace_equation}
            \begin{aligned}
                \Delta \Phi &= \delta_\theta,\\
                \restr{\Phi}{y=0} &= 0,\\
                \restr{\Phi}{y=-h} &= 0.
            \end{aligned}
        \end{equation}
        Moreover, the function $(x,y) \mapsto \Phi(x,y) - \Gamma(x,y+(1-\theta)h)$ is harmonic and satisfies
        \begin{equation}
            \label{eq:phi_gradient}
            \nabla^\perp\left(\Phi - \Gamma(x,y+(1-\theta)h)\right)(0,-(1-\theta)h) = \left(\frac{1}{4h}\cot(\pi \theta), 0\right),
        \end{equation}
        where $\Gamma$ is the Newtonian potential introduced in \Cref{prop:delta_vorticity}.
    \end{proposition}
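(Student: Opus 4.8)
The plan is to recognize $\Phi$ as the Dirichlet Green's function (up to normalization) for the Laplacian on the strip $\{-h<y<0\}$ with source at $(0,-(1-\theta)h)$, assembled from the infinite array of image charges encoded by the $\cosh+\cos$ kernel, and then to exploit the fact that such a function is, away from its single interior singularity, the real part of a holomorphic function of $z=x+iy$. The computational engine is the factorization
\[
    \cosh(\pi x/h) + \cos(\pi(y/h-\theta)) = 2\left\lvert\cosh\left(\tfrac{\pi}{2h}(z-i\theta h)\right)\right\rvert^2,
\]
which follows from $\cos P\cos Q = \tfrac12(\cos(P-Q)+\cos(P+Q))$ together with $\cos(iw)=\cosh w$. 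Writing $\Phi=\Phi_1-\Phi_2$ for the two logarithms, this identity yields
\[
    \Phi_1 = \frac{\log 2}{4\pi} + \frac{1}{2\pi}\log\left\lvert\cosh\left(\tfrac{\pi}{2h}(z-i\theta h)\right)\right\rvert = \operatorname{Re}\!\left[\tfrac{\log 2}{4\pi} + \tfrac{1}{2\pi}\log\cosh\left(\tfrac{\pi}{2h}(z-i\theta h)\right)\right],
\]
and the analogous expression for $\Phi_2$ with $z-i\theta h$ replaced by $z+i\theta h$.

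From here the boundary conditions are immediate: at $y=0$ and at $y=-h$ the numerator and denominator of the original fraction coincide (using $\cos(\pi\pm\pi\theta)=-\cos\pi\theta$ at the bottom), so $\Phi$ vanishes identically on both boundaries. For the structural claims I locate the zeros of the two $\cosh$ factors: that of $\Phi_1$ sits at $z_0=-i(1-\theta)h$, i.e. exactly at $(0,-(1-\theta)h)$, while that of $\Phi_2$ lies at $y=-(1+\theta)h<-h$, outside $\Omega(\eta)$; the constraint $\lvert\eta(0)\rvert<(1-\theta)h$ from $\Lambda_\theta$ keeps the remaining image singularity $(0,(1-\theta)h)$ above the surface, so within $\Omega(\eta)$ only the vortex singularity is present. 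Hence $\Phi_2$ is harmonic on all of $\Omega(\eta)$, and $\Phi_1$ is harmonic away from $z_0$. Near $z_0$ a simple zero of $\cosh$ gives $\log\lvert\cosh(\cdots)\rvert=\log\lvert z-z_0\rvert+(\text{harmonic})$, so that
\[
    \Phi - \Gamma(x, y+(1-\theta)h) = \Big(\Phi_1 - \tfrac{1}{2\pi}\log\lvert z - z_0\rvert\Big) - \Phi_2
\]
has a removable singularity and is harmonic throughout $\Omega(\eta)$; here I use $\Gamma(x,y+(1-\theta)h)=\tfrac{1}{2\pi}\log\lvert z-z_0\rvert$. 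Local integrability of $\Phi$ (hence that it is a regular distribution) then follows from that of $\Gamma$ via \Cref{prop:delta_vorticity}, and the same proposition gives $\Delta\Phi=\Delta\,\Gamma(x,y+(1-\theta)h)=\delta_\theta$ in the sense of distributions.

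For the gradient identity \Cref{eq:phi_gradient} I set $u\coloneqq\Phi-\Gamma(x,y+(1-\theta)h)$, which is harmonic and, being even in $x$ (the whole construction is), satisfies $u_x(0,\cdot)\equiv 0$; this already gives the vanishing second component. Writing $u=\operatorname{Re}F$ with
\[
    F(z) = \tfrac{1}{2\pi}\log\cosh\!\left(\tfrac{\pi}{2h}(z-i\theta h)\right) - \tfrac{1}{2\pi}\log\cosh\!\left(\tfrac{\pi}{2h}(z+i\theta h)\right) - \tfrac{1}{2\pi}\log(z-z_0),
\]
I use $u_x-iu_y=F'$ and evaluate at $z_0$. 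The first and third terms of $F'$ combine, and the expansion $\tanh(-i\pi/2+w)=\coth w=\tfrac{1}{w}+O(w)$ shows their poles cancel with zero finite remainder; what survives is $-\tfrac{1}{4h}\tanh\!\big(\tfrac{\pi}{2h}(z_0+i\theta h)\big)$. Since $z_0+i\theta h=i(2\theta-1)h$, this equals $-\tfrac{1}{4h}\tanh\!\big(i\pi(\theta-\tfrac12)\big)=\tfrac{i}{4h}\cot(\pi\theta)$, whose real and imaginary parts give $u_x(0,y_0)=0$ and $-u_y(0,y_0)=\tfrac{1}{4h}\cot(\pi\theta)$, so that $\nabla^\perp u(0,-(1-\theta)h)=(\tfrac{1}{4h}\cot(\pi\theta),0)$ as claimed. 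I expect the only delicate point to be precisely this last step: correctly extracting the finite part of the gradient at the singular point, which hinges on the vanishing constant term in the Laurent expansion of $\coth$ and on the identity $\tan(\pi\theta-\tfrac{\pi}{2})=-\cot\pi\theta$.
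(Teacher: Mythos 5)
Your proof is correct, and it takes a genuinely different route from the paper's. The paper proves the proposition by invoking the abstract Green's function result of \Cref{thm:greens_functions}: it constructs the bijective conformal map
\[
    f(z) = \frac{e^{\pi(z+ih)/h} - e^{i\pi\theta}}{e^{\pi(z+ih)/h} - e^{-i\pi\theta}}
\]
from the strip $\R \times (-h,0)$ onto the unit disk sending $-i(1-\theta)h$ to the origin, sets $\Phi = \frac{1}{2\pi}\log\lvert f \rvert$, and reads off the boundary values, the distributional Laplacian, and the gradient identity \Cref{eq:phi_gradient} from that theorem; in particular, the velocity at the vortex comes from the packaged formula $\nabla h(z_0) = \frac{1}{4\pi}\overline{\left(f''(z_0)/f'(z_0)\right)}$, and the meromorphic extension of $f$ handles the passage from the strip to $\Omega(\eta)$. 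You bypass the appendix entirely: your factorization $\cosh(\pi x/h) + \cos(\pi(y/h \mp \theta)) = 2\left\lvert\cosh\left(\tfrac{\pi}{2h}(z \mp i\theta h)\right)\right\rvert^2$ exhibits $\Phi$ directly as a difference of real parts of explicit holomorphic functions (vortex plus mirror vortex), the boundary conditions are verified by inspection of the original fraction, harmonicity of $\Phi - \Gamma(x,y+(1-\theta)h)$ follows from locating the zeros of the two $\cosh$ factors (with the hypothesis $\lvert\eta(0)\rvert < (1-\theta)h$ used exactly where the paper uses it, to exclude the image singularities from $\Omega(\eta)$), and the gradient is extracted from the Laurent expansion of $F'$, using that $\coth w = 1/w + O(w)$ has vanishing constant term together with $\tanh(i\pi(\theta-\tfrac12)) = -i\cot(\pi\theta)$; your final answer matches the paper's. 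The trade-off is this: your argument is self-contained and makes the mirror-vortex structure (which the paper only remarks on after the proposition) completely transparent, whereas the paper's detour through \Cref{thm:greens_functions} buys reusability --- the same theorem and its mixed-boundary variant \Cref{thm:greens_functions_mixed} are invoked again in \Cref{section:infinite_depth} for the periodic infinite-depth stream function, where a by-hand computation like yours would have to be redone from scratch with a different image system.
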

    \begin{proof}
        We will apply \Cref{thm:greens_functions} (see \Cref{appendix}) to prove this result, and thus need a bijective conformal map from the strip $\R \times (-h,0) \subseteq \C$ to the unit disk $\mathbb{D}$, mapping the point $-i(1-\theta)h$ to the origin. This is done in three steps:
        \[
            \includegraphics{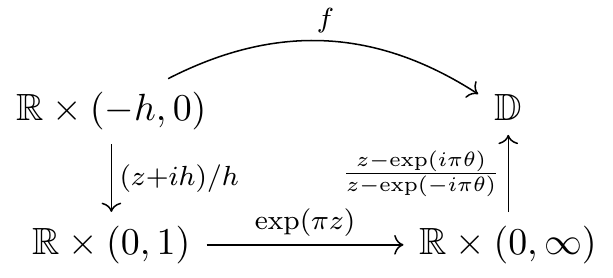}
        \]
        The conformal map for each individual step is well known from elementary complex analysis, see for instance \cite[II.7 and p. 60]{Gamelin2001}. Hence
        \[
            f(z) \coloneqq \frac{e^{\pi (z+ih)/h} - e^{i \pi \theta}}{e^{\pi (z+ih)/h} - e^{-i\pi \theta}}
        \]
        defines the desired map from the strip to the unit disk. By the aforementioned theorem, then,
        \begin{align*}
            \Phi(x,y) &\coloneqq \frac{1}{2\pi} \log(|f(x+iy)|)\\
            &=\frac{1}{4\pi} \log{\left(\frac{\cosh(\pi x/h) +\cos(\pi(y/h-\theta))}{\cosh(\pi x/h) + \cos(\pi (y/h+ \theta))}\right)}
        \end{align*}
        solves \Cref{eq:point_vortex_laplace_equation} in $\R \times (-h,0)$. Because $f$ extends to a meromorphic function on $\C$, it is immediate that $\Phi$ also solves \Cref{eq:point_vortex_laplace_equation} in $\Omega(\eta)$ (recall that $\eta \in \Lambda_\theta$).
        
        Finally, we have
        \begin{align*}
            \nabla^\perp\left(\Phi - \Gamma(x,y+(1-\theta)h)\right)(0,\theta) &= \frac{i}{4\pi}\overline{\left(\frac{f''(i\theta)}{f'(i\theta)}\right)}\\
            &=\left(\frac{1}{4h}\cot(\pi\theta),0\right)
        \end{align*}
        by the final part of the same theorem, which will be important for the asymptotic velocity of the traveling waves that we shall obtain in \Cref{thm:existence_point_vortex}.
    \end{proof}
    Note that the stream function $\Phi$ introduces a ``mirror vortex'' at $(0, (1-\theta)h)$, and moreover is $2h$-periodic in the $y$-direction. This is the reason for the limitation on the height of the surface profiles in the set $\Lambda_\theta$ defined in \Cref{eq:definition_of_surface_profile_set}.
    
    The next proposition is crucial, because the traces of $\Phi$ and its derivatives on the surface enter in the Zakharov--Sulem--Craig formulation of the problem. Having an explicit expression for $\Phi$ enables us to prove the proposition in a quite direct way.
    
    \begin{proposition}
        \label{prop:phi_trace}
        Suppose that $\eta \in H^s(\R) \cap \Lambda_\theta$, where $s > \frac{1}{2}$. Then
        \begin{gather*}
            \Phi(\cdot,\eta(\cdot)) \in H^s(\R),\\
            \nabla^\perp\Phi(\cdot,\eta(\cdot)) \in H^s(\R)^2.
        \end{gather*}
        Moreover, the dependence on $\eta$ is analytic.
    \end{proposition}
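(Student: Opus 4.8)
The plan is to exploit the explicit, essentially closed form of $\Phi$ together with the fact that $H^s(\R)$ is a Banach algebra for $s > 1/2$. The starting observation is that the only singularities of $\Phi$ in the $(x,y)$-plane are the logarithmic poles at $x=0$, $y \equiv \pm(1-\theta)h \pmod{2h}$, arising from the zeros of $\cosh(\pi x/h) + \cos(\pi(y/h\mp\theta))$; away from these points $\Phi$ is real-analytic, being the real part of a holomorphic function by the representation $\Phi = \tfrac{1}{2\pi}\log\abs{f}$ established in the proof of \Cref{prop:phi_properties}. Since $\cosh(\pi x/h) > 1$ for $x \neq 0$, no singularity occurs off the line $x=0$, and the constraint $\abs{\eta(0)} < (1-\theta)h$ built into $\Lambda_\theta$ keeps the graph of $\eta$ a positive distance from the poles there. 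Fixing a base profile $\eta_0 \in H^s(\R) \cap \Lambda_\theta$, I would first record that there exist $\delta > 0$ and an $H^s$-neighborhood of $\eta_0$ such that, for every $\eta$ in it, the slice $y \mapsto \Phi(x,y)$ extends holomorphically to the complex disk of radius $\delta$ about $\eta(x)$, uniformly in $x$; here the embedding $H^s(\R) \hookrightarrow BC(\R)$ converts the pointwise constraint at $x=0$ into a uniform one. (The extension radius is in fact smallest near $x=0$ and grows with $\abs{x}$, so the binding gap is the one at the origin.)

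The second ingredient is decay. Writing the argument of the logarithm as $1 + (a-b)/(\cosh(\pi x/h)+b)$ with $a-b = 2\sin(\pi\theta)\sin(\pi y/h)$, one sees that $\Phi(x,y)$ and each of its partial derivatives decay like $e^{-\pi\abs{x}/h}$ as $\abs{x} \to \infty$, uniformly for $y$ in the relevant bounded range, and the same holds for the holomorphic extension. Combined with the uniform analyticity above, Cauchy estimates give $\partial_y^n\Phi(\cdot,\eta_0(\cdot)) \in H^s(\R)$ together with bounds of the form $\norm{\partial_y^n\Phi(\cdot,\eta_0(\cdot))}_{H^s} \lesssim C M^n n!$, the factorial coming from the radius of analyticity $\delta$.

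With these two ingredients, membership and analyticity both follow from the Nemytskii/substitution calculus on the Banach algebra $H^s(\R)$. For $\eta = \eta_0 + \rho$ with $\rho$ small in $H^s$, I would Taylor expand in the vertical slot,
\[
\Phi(x,\eta_0(x)+\rho(x)) = \sum_{n=0}^{\infty} \frac{1}{n!}\, \partial_y^n\Phi(x,\eta_0(x))\,\rho(x)^n,
\]
and estimate the $n$-th term by the algebra inequality $\norm{\partial_y^n\Phi(\cdot,\eta_0(\cdot))\,\rho^n}_{H^s} \le C^n \norm{\partial_y^n\Phi(\cdot,\eta_0(\cdot))}_{H^s}\norm{\rho}_{H^s}^n$. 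The factorial growth of the coefficient norms is exactly cancelled by the $1/n!$, so the series converges in $H^s(\R)$ once $\norm{\rho}_{H^s}$ lies below a fixed radius; this simultaneously shows $\Phi(\cdot,\eta(\cdot)) \in H^s(\R)$ and that $\eta \mapsto \Phi(\cdot,\eta(\cdot))$ is analytic near $\eta_0$. The identical argument, applied to the two components of $\nabla^\perp\Phi$, which share the same analyticity domain and exponential decay, yields $\nabla^\perp\Phi(\cdot,\eta(\cdot)) \in H^s(\R)^2$ with analytic dependence.

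I expect the main obstacle to be the uniformity near $x=0$: the radius $\delta$ and the constant $M$ degenerate as $\abs{\eta(0)} \uparrow (1-\theta)h$, so the estimates are only locally uniform on $\Lambda_\theta \cap H^s$, and one must shrink the $H^s$-ball around $\eta_0$ (using $H^s \hookrightarrow BC$) so that the entire family of graphs stays a fixed distance from the poles. The one genuinely technical point is verifying the coefficient bounds $\norm{\partial_y^n\Phi(\cdot,\eta_0(\cdot))}_{H^s} \lesssim C M^n n!$ honestly — that is, controlling the full $H^s$-norms of the composed derivatives (which involve $x$-derivatives falling on $\eta_0$) rather than mere sup-norms — but this is routine given the explicit formula and the Banach-algebra structure.
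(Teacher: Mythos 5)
Your geometric groundwork is correct: the singularities of $\Phi$ lie on the line $x=0$, the condition $\abs{\eta(0)}<(1-\theta)h$ in $\Lambda_\theta$ (together with $H^s(\R)\hookrightarrow BC(\R)$) keeps the graph a uniform distance from them, the vertical holomorphic extension has a uniform radius $\delta$, and $\Phi$ and its derivatives decay like $e^{-\pi\abs{x}/h}$. The Taylor-expansion-in-the-Banach-algebra scheme is also a legitimate mechanism for analyticity. The gap is precisely the step you dismiss as routine: the bounds $\norm{\partial_y^n\Phi(\cdot,\eta_0(\cdot))}_{H^s}\lesssim CM^n n!$ at a \emph{general} base profile $\eta_0\in H^s(\R)\cap\Lambda_\theta$. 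By Cauchy's formula,
\[
    \partial_y^n\Phi(x,\eta_0(x)) = \frac{n!}{2\pi\delta^n}\int_0^{2\pi}\Phi\bigl(x,\eta_0(x)+\delta e^{i\phi}\bigr)e^{-in\phi}\,d\phi,
\]
so by Minkowski's integral inequality the factorial structure comes for free, and what is actually needed is $\sup_{\abs{w}\leq\delta}\norm{\Phi(\cdot,\eta_0(\cdot)+w)}_{H^s}<\infty$. That is an instance of the very membership statement being proved, now along (a complex shift of) the base profile; indeed your $n=0$ coefficient already \emph{is} $\Phi(\cdot,\eta_0(\cdot))$. Your argument therefore propagates the conclusion from a base point to an $H^s$-ball around it, but at the base point it assumes it.

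The Banach-algebra structure cannot fill this in on its own. It does handle \emph{entire} functions: for example $\cos(\pi(\eta_0/h-\theta))-\cos(\pi\theta)$ lies in $H^s(\R)$, analytically in $\eta_0$, simply by summing the Maclaurin series, since $\norm{\eta_0^n}_{H^s}\leq A^{n-1}\norm{\eta_0}_{H^s}^n$ with $A$ the algebra constant and the series has infinite radius of convergence. But $\Phi$ carries an outer logarithm: after factoring out $\cosh(\pi x/h)$ one must compose $\log(1+\cdot)$ with $u \coloneqq \cos(\pi(\eta_0/h-\theta))\sech(\pi x/h)\in H^s(\R)$, and the series for $\log(1+z)$ has radius $1$, whereas $\norm{u}_{H^s}$ can be arbitrarily large (take $\eta_0$ large and oscillatory away from the origin) even though the pointwise range of $u$ stays inside the region where the logarithm is analytic — which is what $\eta_0\in\Lambda_\theta$ guarantees. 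Bridging this discrepancy between pointwise range and $H^s$-norm, for fractional $s>1/2$, is exactly the content of the Nemytskii-operator theorem the paper invokes, \cite[Theorem 4 of 5.5.3]{Runst1996}, applied once to the inner composition and once to the outer logarithm; it does not follow from the explicit formula plus the algebra property. Your plan does go through verbatim with the single base point $\eta_0=0$, where the coefficients $\partial_y^n\Phi(\cdot,0)$ are explicit, smooth and exponentially decaying; but that proves the proposition only on a small ball about the origin (enough, incidentally, for the implicit function theorem argument in \Cref{thm:existence_point_vortex}), not on all of $H^s(\R)\cap\Lambda_\theta$ as stated. To treat every $\eta_0$ you need either the cited composition theorem — at which point the paper's short proof is available — or an honest fractional-Sobolev estimate for such compositions, which is genuinely technical rather than routine.
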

    \begin{proof}
        We will only treat $\Phi$, as the argument for the derivative is similar. Observe that it is sufficient to consider the function defined by
        \begin{equation}
            \label{eq:phi_trace_sufficient}
            x \mapsto \log(1+\cos(\eta(x)-\theta)\sech(x))
        \end{equation}
        for $\eta \in H^s(\R)$ such that $\abs{\eta(0)} < \pi-\theta$, where $\theta \in (0,\pi)$. Since
        \[
            \cos(\eta(x)-\theta)\sech(x) = (\cos(\eta(x)-\theta)-\cos(\theta))\sech(x)+\cos(\theta)\sech(x)
        \]
        it follows by \cite[Theorem 4 of 5.5.3]{Runst1996}, $\sech \in H^s(\R)$ and $H^s(\R)$ being an algebra that the function in \Cref{eq:phi_trace_sufficient} lies in $H^s(\R)$ and that the dependence on $\eta$ is analytic. Another application of the result in \cite{Runst1996} then yields the desired result.
    \end{proof}

    As we have seen, because of the reliance on the stream function and the operators $H(\eta)$ and $G(\eta)$, a central problem is the solution of the Laplace equation,
    \begin{equation}
        \begin{gathered}
            \label{eq:laplace}
            \Delta \hat{\psi} = 0 \quad \text{in $\Omega(\eta)$},\\
            \restr{\hat{\psi}}{y=\eta} = \zeta,\quad \restr{\hat{\psi}}{y=-h}= 0
        \end{gathered}
    \end{equation}
    on the fluid domain $\Omega(\eta)$, given $\eta$ and $\zeta$. We have the following theorem, which is adapted from Corollary 2.44 in \cite{Lannes2013}, and which establishes both existence and uniqueness to \Cref{eq:laplace} in suitable Sobolev spaces. Functions on the surface will be identified with functions on the real line as in \Cref{eq:velocity_potential_trace}.
    
    \begin{theorem}[Well-posedness of the Laplace equation \cite{Lannes2013}]
        \label{eq:laplace_well_posed}
        Suppose that $\eta \in H^s(\R) \cap \Lambda_\theta$ for some $s > 3/2$, and that $\zeta \in H^{3/2}(\R)$. Then \Cref{eq:laplace} has a unique solution in $H^2(\Omega(\eta))$.
    \end{theorem}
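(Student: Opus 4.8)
The plan is to straighten the fluid domain to a fixed flat strip, rewrite \Cref{eq:laplace} as a uniformly elliptic equation in divergence form, solve it by a variational argument, and then recover $H^2$-regularity by an elliptic bootstrap. Since $s>3/2$, the embedding $H^s(\R)\hookrightarrow BC(\R)$ together with $\eta\in\Lambda_\theta$ gives $\eta\to 0$ at infinity and $\eta>-h$, hence $\inf_{\R}\eta>-h$, so $\Omega(\eta)$ is a strip of uniformly positive height. This uniform lower bound is exactly what will keep the transformed operator uniformly elliptic. Concretely I would use the diffeomorphism
\[
    \map{\Sigma}{\R\times(-h,0)}{\Omega(\eta)},\qquad \Sigma(x,z)\coloneqq\left(x,\ z+\frac{z+h}{h}\,\eta(x)\right),
\]
which sends $z=0$ to the surface and $z=-h$ to the bottom and has Jacobian determinant $1+\eta(x)/h>0$. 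Writing $\hat{\psi}=\phi\circ\Sigma^{-1}$, the equation $\Delta\hat{\psi}=0$ becomes $\nabla\cdot(P\nabla\phi)=0$ on the flat strip, where the symmetric matrix $P=P(\eta,\eta')$ is positive definite with entries rational in $\eta,\eta'$ and denominator $1+\eta/h$; uniform ellipticity follows from $\inf_\R\eta>-h$ and the boundedness of $\eta,\eta'$.

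The existence and uniqueness of a finite-energy solution is then a routine application of the Lax--Milgram theorem. First I would subtract a lift of the boundary data: choose $\underline{\phi}\in H^2$ of the strip with $\underline{\phi}|_{z=0}=\zeta$ and $\underline{\phi}|_{z=-h}=0$ (the trace theorem makes $\zeta\in H^{3/2}(\R)$ the natural data for an $H^2$ extension, which is precisely the hypothesis on $\zeta$). The reduced unknown $v\coloneqq\phi-\underline{\phi}$ then solves $\nabla\cdot(P\nabla v)=-\nabla\cdot(P\nabla\underline{\phi})$ in $H^{-1}$ with homogeneous Dirichlet data on both horizontal boundaries. The associated bilinear form $(\varphi_1,\varphi_2)\mapsto\int P\nabla\varphi_1\cdot\nabla\varphi_2$ is bounded and, because the strip has finite height, coercive on the space of $H^1$ functions vanishing on $z=0$ and $z=-h$: the vertical Poincaré inequality controls the full $H^1$-norm by $\|\nabla\varphi\|_{L^2}$. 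Lax--Milgram thus yields a unique $v$, hence a unique $\phi=v+\underline{\phi}$, and the same coercivity gives the uniqueness statement, since the homogeneous problem admits only the trivial finite-energy solution.

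It remains to upgrade this weak solution to $H^2$, and this is where the real work lies. The naive diffeomorphism $\Sigma$ above has coefficients $P$ only in $H^{s-1}$, and for $3/2<s\le 2$ this is not quite regular enough to run the standard second-order elliptic estimate directly (one would like $P\in W^{1,\infty}$). The clean fix, following Lannes, is to replace the linear-in-$z$ profile in $\Sigma$ by a smoothing (regularizing) extension of $\eta$ — for instance a Poisson-type extension into the strip — which is smooth in the interior and whose derivatives degenerate only in a controlled, scale-invariant way as $z\to 0$. With such a diffeomorphism the coefficients $P$ gain the regularity needed to estimate tangential difference quotients of $\phi$ and then, through the equation, the remaining normal derivative, giving $\phi\in H^2$ of the strip with uniform bounds. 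Transporting back through $\Sigma^{-1}$ — an $H^2$-isomorphism between the strip and $\Omega(\eta)$ precisely because $\eta\in H^s$ with $s>3/2$ — yields $\hat{\psi}\in H^2(\Omega(\eta))$. I expect the \emph{main obstacle} to be exactly this sharp regularity count at the threshold $s>3/2$, $\zeta\in H^{3/2}$: choosing the diffeomorphism and organizing the difference-quotient estimates so that no more than the available half-derivative of boundary regularity is ever consumed. The unboundedness of the strip in $x$ is a comparatively minor point, handled by the vertical Poincaré inequality and translation-invariant constants.
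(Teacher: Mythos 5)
The paper offers no proof of this theorem at all: it is quoted, in a form adapted from the Neumann bottom condition to the Dirichlet one, from Corollary 2.44 of Lannes's book, and your sketch reproduces exactly the strategy of that cited proof — flattening $\Omega(\eta)$ to a strip, Lax--Milgram with the vertical Poincar\'e inequality, and a regularizing (smoothing) diffeomorphism to recover $H^2$ regularity despite the coefficients being only $H^{s-1}$ when $3/2 < s \leq 2$. Your proposal is correct in outline and takes essentially the same route as the paper's source, including the correct identification of the low-regularity coefficient issue as the main difficulty.
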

    \begin{remark}
        While the natural setting for the velocity potential or the stream function on infinite depth is that of homogeneous Sobolev spaces, used in both \cite{Shatah2013} and \cite{Lannes2013}, this is not the case for the stream function on finite depth. Because we require $\hat{\psi}$ to be equal to a constant at the bottom, it must necessarily be the case that $\hat{\psi}$ tends to the same constant at infinity. Otherwise, because of the finite depth, $\hat{\psi_y}$ would not decay at infinity (in the sense that $\lim_{\abs{(x,y)}\to \infty} \psi_y(x,y) = 0$), and therefore not describe a localized wave\footnote{One could say that such a wave is localized if the limit exists and is different from zero, but this does not yield any new waves (only a change in the frame of reference).}.
    \end{remark}
    
    \Cref{eq:laplace_well_posed} enables us to define the harmonic extension operator described in \Cref{def:harmonic_extension_operator} as an operator $H^{3/2}(\R) \to H^2(\Omega(\eta))$, and using this, defining the Dirichlet-to-Neumann operator. We refer the reader to \cite{Lannes2013}, which is a rich source of results for these operators also in a more general setting. The results there are proved for the velocity potential, but should be adaptable for the stream function.\footnote{Compare with Theorems 3.49 and A.13 in \cite{Lannes2013} for the case of infinite depth, where the boundary conditions for the Laplace equation for the stream function and velocity potential coincide.} The below theorem is an amalgamation of parts from Corollary 2.40 and Theorems 3.15 and A.11 in \cite{Lannes2013}. See also \cite{Shatah2008}.
    
    \begin{theorem}[Boundary operators \cite{Lannes2013}]
        \label{thm:boundary_operators}
        Let $s > 3/2$ and suppose that $\eta \in H^s(\R) \cap \Lambda_\theta$. Then the harmonic extension operator $H(\eta)$ and the Dirichlet-to-Neumann operator $G(\eta)$ are members of $B(H^{3/2}(\R),H^2(\Omega(\eta))$ and $B(H^s(\R),H^{s-1}(\R))$, respectively. The norms of these operators are uniformly bounded on subsets of $H^s(\R) \cap \Lambda_\theta$ that are bounded in the norm on $H^s(\R)$. Moreover, the map $G(\cdot)\zeta$ is analytic for fixed $\zeta \in H^s(\R)$.
    \end{theorem}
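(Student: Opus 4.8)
The plan is to transport the problem to a fixed reference strip and then lean on the elliptic estimates and analytic-dependence machinery already developed in \cite{Lannes2013}; the only genuinely new bookkeeping is that all constants should depend on $\eta$ only through $\norm{\eta}_{H^s(\R)}$, and that the homogeneous bottom condition for the stream function causes no additional difficulty (it is, if anything, simpler than the Neumann-type condition treated there).

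First I would choose a regularizing diffeomorphism $\map{\Sigma_\eta}{\mathcal{S}}{\Omega(\eta)}$ from the flat strip $\mathcal{S} \coloneqq \R \times (-h,0)$ onto $\Omega(\eta)$, of the form $\Sigma_\eta(x,z) = (x,\, z + \sigma(\eta)(x,z))$ with $\sigma(\eta)(\cdot,0) = \eta$ and $\sigma(\eta)(\cdot,-h) = 0$, built from a smoothing operator so that $\Sigma_\eta$ is a bi-Lipschitz $C^1$-diffeomorphism with Jacobian bounded away from zero and the interior coefficients gain the half-derivative one needs for the sharp estimates. The constraint $\eta \in \Lambda_\theta$, together with $\eta > -h$, keeps $\Omega(\eta)$ non-degenerate uniformly on balls of $H^s(\R) \cap \Lambda_\theta$. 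Pulling \Cref{eq:laplace} back by $\Sigma_\eta$ turns it into a divergence-form equation $\nabla \cdot (P(\eta)\nabla\phi) = 0$ on $\mathcal{S}$, with $\phi \coloneqq [H(\eta)\zeta]\circ\Sigma_\eta$, homogeneous data at $z=-h$ and Dirichlet datum $\zeta$ at $z=0$, where the symmetric matrix $P(\eta)$ is formed rationally from the first derivatives of $\sigma(\eta)$. One checks that $\eta \mapsto P(\eta)$ is real-analytic into a suitable coefficient space and that $P(\eta)$ is uniformly elliptic on the stated bounded sets.

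With this in hand, \Cref{eq:laplace_well_posed} (more precisely, the variational solvability behind it) produces $\phi$ together with its $H^2(\mathcal{S})$-bound; transporting back through $\Sigma_\eta^{-1}$, whose Sobolev norms are controlled by $\norm{\eta}_{H^s(\R)}$, gives $H(\eta) \in B(H^{3/2}(\R),H^2(\Omega(\eta)))$ with the asserted uniform bound. For $G(\eta)$ I would rewrite $G(\eta)\zeta = (-\eta',1)\cdot\restr{\nabla[H(\eta)\zeta]}{y=\eta}$, which under $\Sigma_\eta$ becomes a component of the conormal derivative $P(\eta)\nabla\phi$ at $z=0$. Elliptic regularity on the flat strip then gains half a derivative, so that $\nabla\phi$ has a trace in $H^{s-1}(\R)$; since $s-1 > 1/2$, pointwise multiplication on $H^{s-1}(\R)$ is continuous, which absorbs the factor $(-\eta',1)$ and yields $G(\eta) \in B(H^s(\R),H^{s-1}(\R))$ with norm controlled by $\norm{\eta}_{H^s(\R)}$.

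Finally, for the analyticity of $\eta \mapsto G(\eta)\zeta$ I would apply the analytic implicit function theorem on $\mathcal{S}$. Writing $\phi = E\zeta + \phi_0$ with $E\zeta$ a fixed lift of the boundary data (analytic in $\zeta$, independent of $\eta$), the remainder $\phi_0$ solves $L(\eta)\phi_0 = -L(\eta)E\zeta$ subject to homogeneous traces, where $L(\eta)\varphi \coloneqq \nabla\cdot(P(\eta)\nabla\varphi)$. By the uniform ellipticity this $L(\eta)$ is an isomorphism onto its range for each admissible $\eta$, and $\eta \mapsto L(\eta)$ is analytic because $\eta \mapsto P(\eta)$ is; hence $\phi_0 = -L(\eta)^{-1}L(\eta)E\zeta$, and therefore $\phi$ and $G(\eta)\zeta$, depend analytically on $\eta$. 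The main obstacle is precisely this last paragraph: arranging the diffeomorphism, the coefficient map $\eta \mapsto P(\eta)$, and the inverse $L(\eta)^{-1}$ to be simultaneously analytic and uniformly bounded is where the substantive work lies, and it is exactly this package that is imported from \cite{Lannes2013} (after checking that the argument for the velocity potential carries over to the stream function).
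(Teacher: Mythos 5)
Your proposal is correct and takes essentially the same route as the paper: the paper offers no proof of its own, stating the theorem as an amalgamation of Corollary 2.40 and Theorems 3.15 and A.11 of \cite{Lannes2013}, and your sketch --- regularizing diffeomorphism onto the flat strip, uniform ellipticity of the transformed divergence-form problem, elliptic estimates, and analyticity obtained from the analytic coefficient map $\eta \mapsto P(\eta)$ together with inversion of the elliptic operator --- is precisely the machinery of that reference, including the same caveat about transferring the argument from the velocity potential to the stream function. One shared imprecision worth noting (it afflicts the paper's statement just as much as your write-up): the uniform bounds in \cite{Lannes2013} require a uniform non-cavitation constant, i.e. $\eta$ bounded away from $-h$, which an $H^s$-bounded subset of $H^s(\R)\cap\Lambda_\theta$ does not by itself provide, since membership in $\Lambda_\theta$ only constrains $\eta$ pointwise at the origin.
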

    
    In the same setting as in \Cref{thm:boundary_operators}, the curvature of the surface is well defined:
    
    \begin{proposition}[Curvature]
        \label{prop:curvature}
        The curvature operator $\kappa$ is well defined as an operator $H^s(\R) \to H^{s-2}(\R)$ for any $s > 3/2$. Moreover, the map is analytic.
    \end{proposition}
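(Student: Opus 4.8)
The plan is to factor the curvature operator through the chain
\[
    H^s(\R) \xrightarrow{\ \partial_x\ } H^{s-1}(\R) \xrightarrow{\ N\ } H^{s-1}(\R) \xrightarrow{\ \partial_x\ } H^{s-2}(\R),
\]
where $N$ denotes the substitution (Nemytskii) operator $N(g) \coloneqq g\jb{g}^{-1}$ associated with the scalar function $F(t) \coloneqq t(1+t^2)^{-1/2}$, so that $\kappa = \partial_x \circ N \circ \partial_x$ directly from the definition $\kappa(\eta) = (\eta'\jb{\eta'}^{-1})'$. Differentiation is a bounded linear map between the indicated spaces, hence trivially analytic, so everything reduces to showing that $N$ is a well-defined analytic map of $H^{s-1}(\R)$ into itself; since a composition of analytic maps is again analytic, this yields the claim.

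For the operator $N$ I would exploit that $s-1 > 1/2$, so that by the Sobolev embedding $H^{s-1}(\R) \hookrightarrow BC(\R)$ the space $H^{s-1}(\R)$ is a Banach algebra of bounded continuous functions. The function $F$ is real-analytic on all of $\R$ (its denominator never vanishes) and satisfies $F(0) = 0$, indeed the pointwise bound $\abs{F(t)} \le \abs{t}$ holds, so $F \circ g \in L^2(\R)$ whenever $g$ does. These are exactly the hypotheses under which the composition-operator results of Runst and Sickel apply: as in the proof of \Cref{prop:phi_trace}, \cite[Theorem 4 of 5.5.3]{Runst1996} shows both that $g \mapsto F \circ g$ maps $H^{s-1}(\R)$ into itself and that this dependence is analytic.

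Should one prefer to avoid invoking the substitution operator for $F$ directly, an equivalent route is to write $N(g) = g \cdot G(g)$ with $G(t) \coloneqq (1+t^2)^{-1/2}$ and to split $N(g) = g + g\,(G(g)-1)$. Here $\tilde{G} \coloneqq G - 1$ is real-analytic with $\tilde{G}(0) = 0$, so $g \mapsto \tilde{G} \circ g$ is analytic on $H^{s-1}(\R)$ by the same cited result, while multiplication is a bounded bilinear, hence analytic, map on the algebra $H^{s-1}(\R)$; analyticity of $N$ then follows from the analyticity of sums, products, and compositions of analytic maps.

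I expect the only genuine subtlety to be the analyticity of the substitution operator, that is, verifying that the Runst--Sickel machinery really delivers analyticity (and not merely smoothness) in this fractional-order setting on the unbounded domain $\R$. Once that input is secured, the remainder of the argument is purely formal, amounting to composing bounded linear maps with a single analytic map and tracking the Sobolev indices.
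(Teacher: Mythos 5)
Your proposal is correct and follows essentially the same route as the paper: the paper likewise writes $\kappa(\eta) = (f(\eta'))'$ with $f(x) = x\jb{x}^{-1}$, notes that $f$ is analytic with $f(0)=0$ and that $s-1 > 1/2$, and invokes \cite[Theorem 4 of 5.5.3]{Runst1996} both for the mapping property $f(\eta') \in H^{s-1}(\R)$ and for analyticity of the composition operator. Your explicit factorization through $\partial_x$ and the alternative splitting $N(g) = g + g(G(g)-1)$ are just more detailed renderings of the same argument.
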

    \begin{proof}
        Observe that the function $\map{f}{\R}{\R}$ defined by $f(x)=x\jb{x}^{-1}$ is smooth and satisfies $f(0)=0$. As $s-1 > \frac{1}{2}$, the result \cite[Theorem 4 of 5.5.3]{Runst1996} ensures that $f(\eta') \in H^{s-1}(\R)$. Since $f$ is also analytic, $\kappa$ is analytic by the same result.
    \end{proof}
        
    There is one thing we have not yet looked at, namely the vorticity equation \Cref{eq:vorticity_transport_equation_weak_steady}. Recalling \Cref{eq:velocity_in_terms_of_stream_function}, we will consider velocity fields of the form
    \begin{equation}
        \label{eq:velocity_fields_considered_point_vortex}
        w = \nabla^\perp\left(H(\eta)\zeta + \varepsilon \Phi\right).
    \end{equation}
    From \Cref{prop:delta_vorticity} we know that the part of the stream function that is generated by the point vortex at $(0,-(1-\theta)h)$ is given by the Newtonian potential
    \[
        \varepsilon\Gamma(x,y+(1-\theta)h), 
    \]
    whence \Cref{eq:velocity_fields_considered_point_vortex} reduces to
    \[
        (c,0) = \nabla^\perp[H(\eta)\zeta](0,-(1-\theta)h) + \varepsilon\left(\frac{1}{4h}\cot(\pi\theta),0\right),
    \]
    by \Cref{eq:phi_gradient} in \Cref{prop:phi_properties}.
    
    In particular, this means that any solution necessarily must satisfy
    \[
        [H(\eta)\zeta]_x(0,-(1-\theta)h) = 0.
    \]
    For simplicity, we choose to look for $\eta, \zeta$ in appropriately chosen subspaces of $H^s(\R)$, such that this condition is automatically satisfied. Specifically, define
    \[
            H_\text{even}^s(\R) \coloneqq \{f \in H^s(\R) : \text{$f$ is even}\},
    \]
    which is closed in $H^s(\R)$, and therefore a Hilbert space in the inherited norm. We mention that it is still an open question whether asymmetric traveling waves exist. However, it is known that, in many situations, certain properties imply symmetry (see for instance \cite{Constantin2007,Hur2008}). Furthermore, under suitable assumptions, all symmetric waves are traveling waves \cite{Ehrnstroem2009}.
    
    Assume now that $\eta \in H_\text{even}^s(\R) \cap \Lambda_\theta$, with $s > 3/2$, and that $\zeta \in H_\text{even}^{3/2}(\R)$. Then it must necessarily be the case that $H(\eta)\zeta$ is even in $x$. Hence $[H(\eta)\zeta]_x$ vanishes along the $y$-axis and so the vorticity equation reduces further to
    \begin{equation}
        \label{eq:vorticity_equation_point_vortex}
        c =  c_1\varepsilon - [H(\eta)\zeta]_y(0,-(1-\theta)h), \quad \text{where $c_1 \coloneqq \frac{1}{4h}\cot(\pi\theta)$.}
    \end{equation}
    Observe also that the Dirichlet-to-Neumann operator $G(\eta)$ is well defined as an operator $H_\text{even}^s(\R) \to H_\text{even}^{s-1}(\R)$ for $\eta \in H^s_\text{even} \cap{\Lambda_\theta}$ and $s > 3/2$, and that $\kappa$ can be viewed as an operator $H_\text{even}^s(\R) \to H_\text{even}^{s-2}(\R)$.
    
    \begin{remark}
        One has to be careful with claims about the solution set when $\varepsilon = 0$. \Cref{eq:vorticity_equation_point_vortex} of course actually only needs to be satisfied if $\varepsilon \neq 0$. This means that if we impose \Cref{eq:vorticity_equation_point_vortex}, then we lose the trivial set of solutions
        \[
            (\eta, \zeta, c, \varepsilon) \in \{0\} \times \{0\} \times \R \times \{0\}
        \]
        for the other equations, except for the point $(0,0,0,0)$. This should be kept in mind in any claims of uniqueness.
    \end{remark}
    
    For convenience, define now the spaces
    \begin{align*}
        X^s &\coloneqq H_\text{even}^s(\R) \times H_\text{even}^s(\R) \times \R,\\
        Y^s &\coloneqq H_\text{even}^{s-2}(\R) \times H_\text{even}^s(\R) \times \R
        \intertext{and the set}
        U_\theta^s &\coloneqq \{(\eta,\zeta,c) \in X^s: \eta \in \Lambda_\theta\} \subseteq X^s,
    \end{align*}
    which accounts for the limitations on $\eta$.
    
    We proceed to introduce three maps that together will form the basis for our argument. For $s > 3/2$ we define $\map{F_1}{U_\theta^s \times \R}{H_\text{even}^{s-2}(\R)}$ by
    \begin{multline*}
        F_1(\eta, \zeta, c, \varepsilon) = c\left[\frac{\eta'\zeta'+G(\eta)\zeta}{\jb{\eta'}^2}+\varepsilon\Phi_y\right]\\ +\frac{(\zeta' + \varepsilon(1,\eta') \cdot \nabla \Phi)^2 +(G(\eta)\zeta +\varepsilon(-\eta',1)\cdot \nabla \Phi)^2}{2\jb{\eta'}^2}
        + g\eta -\alpha^2\kappa(\eta),
    \end{multline*}
    the map $\map{F_2}{U_\theta^s \times \R}{H_\text{even}^s(\R)}$ by
    \[
        F_2(\eta,\zeta,c,\varepsilon) = c\eta + \zeta+\varepsilon\Phi,
    \]
    and finally the map $\map{F_3}{U_\theta^s \times \R}{\R}$ by
    \begin{equation}
        \label{eq:F3_definition}
        F_3(\eta,\zeta,c,\varepsilon) = c - c_1 \varepsilon + [H(\eta)\zeta]_y(0,-(1-\theta)h).
    \end{equation}
    In all of these definitions, we really mean the traces of $\Phi$ and its derivatives on the surface. The pointwise evaluation in the second term of \Cref{eq:F3_definition} is allowed because $H(\eta)\zeta$ is harmonic. It should be clear that all three maps $F_1, F_2, F_3$ are smooth.
     
    We can now define $\map{F}{U_\theta^s \times \R}{Y^s}$ by
    \[
        F \coloneqq (F_1, F_2, F_3),
    \]
    and our task will then be to find solutions of the equation
    \begin{equation}
        \label{eq:zcs_formulation_point_vortex}
        F(\eta,\zeta,c,\varepsilon)=0.
    \end{equation}
    One may immediately note that $F(0,0,0,0)=0$, so that the origin is a trivial solution. It will turn out that in a small neighborhood of the origin in $X^s \times \R$, there is a unique curve of nontrivial solutions parametrized by the vortex strength parameter $\varepsilon$. 
    
\section{Local bifurcation}
    \label{section:local_bifurcation}
    We can now finally state and prove the following theorem, establishing the existence of small, localized, traveling wave solutions with a point vortex. For this, we will use an implicit function theorem argument on $F$. Note that while we do not apply Crandall--Rabinowitz theorem \cite{Crandall1971}, the situation is very much in the spirit of that theorem. We bifurcate from the family of trivial waves described in the remark after \Cref{eq:vorticity_equation_point_vortex} by introducing the vorticity equation.

    \begin{theorem}[Traveling waves with a point vortex]
        \label{thm:existence_point_vortex}
        Let $s > 3/2$ and $\theta \in (0,1)$. Then there exists an open interval $I \ni 0$ and a $C^\infty$-curve
        \[
            \arraycolsep=0.03\textwidth
            \begin{array}{ccc}
                I & \to & (H_\text{even}^s(\R) \cap \Lambda_\theta) \times H_\text{even}^s(\R) \times \R \times \R\\
                \varepsilon & \mapsto & (\eta(\varepsilon),\zeta(\varepsilon),c(\varepsilon), \varepsilon)
            \end{array}
        \]
        of solutions to the Zakharov--Craig--Sulem formulation, \Cref{eq:zcs_formulation_point_vortex}, for a point vortex of strength $\varepsilon$ situated at $(0,-(1-\theta)h)$. The solutions fulfil
        \begin{equation}
            \label{eq:asymptotic_point_vortex}
            \begin{aligned}
                \eta(\varepsilon)&= \eta_2\varepsilon^2 + O(\varepsilon^4),\\
                \zeta(\varepsilon)&= \zeta_3 \varepsilon^3 + O(\varepsilon^4),\\
                c(\varepsilon)&= c_1\varepsilon + c_3 \varepsilon^3 + O(\varepsilon^4),
            \end{aligned}
        \end{equation}
        in their respective spaces as $\varepsilon \to 0$, where $\eta_2 \in H_\text{even}^s(\R)$ is defined by
        \begin{align*}
            \eta_2 \coloneqq -(g-\alpha^2\partial_x^2)^{-1}\chi, \quad \chi \coloneqq c_1 \Phi_y(\cdot,0) + \frac{1}{2}\Phi_y(\cdot,0)^2,
        \end{align*}
        and where
        \begin{align*}
            \zeta_3 &\coloneqq -\eta_2(c_1 + \Phi_y(\cdot,0)),\\
            c_3 &\coloneqq -[H(0)\zeta_3]_y(0,-(1-\theta)h),
        \end{align*}
        with $c_1$ as in \Cref{eq:vorticity_equation_point_vortex}, $\Phi$ as in \Cref{prop:phi_properties} and $H$ as in \Cref{def:harmonic_extension_operator}.
        
        Moreover, there is a neighborhood of the origin in $U_\theta^s \times \R$ such that this curve describes all solutions to $F(\eta, \zeta, c,\varepsilon)=0$ in that neighborhood.
    \end{theorem}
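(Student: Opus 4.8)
The plan is to solve $F=0$ by the implicit function theorem, treating $\varepsilon$ as the bifurcation parameter and solving for $(\eta,\zeta,c)$ as a function of $\varepsilon$ near the trivial solution $(0,0,0,0)$. Since $F$ is smooth (by \Cref{thm:boundary_operators}, \Cref{prop:phi_trace} and \Cref{prop:curvature}) and $F(0,0,0,0)=0$, everything hinges on showing that the partial Fréchet derivative $L \coloneqq D_{(\eta,\zeta,c)}F(0,0,0,0)$ is a bounded linear isomorphism from $H_\text{even}^s(\R)\times H_\text{even}^s(\R)\times\R$ onto $Y^s$.

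First I would compute $L$. The key simplification is that at the base point both $c$ and $\varepsilon$ vanish, so every term in $F_1$ and $F_2$ carrying a factor of $c$ or $\varepsilon$, as well as every genuinely quadratic term (whose argument also vanishes at the origin), drops out of the linearization. Differentiating the surviving terms gives
\[
    L[\dot\eta,\dot\zeta,\dot c] = \Big((g-\alpha^2\partial_x^2)\dot\eta,\ \dot\zeta,\ \dot c + [H(0)\dot\zeta]_y(0,-(1-\theta)h)\Big).
\]
This is block-triangular: $\dot\zeta$ is read off from the second component, $\dot\eta$ from the first, and $\dot c$ from the third. The only substantive point is the invertibility of $g-\alpha^2\partial_x^2$ as a map $H_\text{even}^s(\R)\to H_\text{even}^{s-2}(\R)$; this is a Fourier multiplier with symbol $g+\alpha^2\xi^2$, bounded away from zero precisely because $\alpha^2>0$. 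This is exactly where surface tension is indispensable. Boundedness of each block (using \Cref{thm:boundary_operators} for the evaluation term) then shows $L$ is an isomorphism, and the implicit function theorem yields the open interval $I$, the $C^\infty$-curve, and---because the implicit function theorem solution is locally unique---the final assertion that the curve exhausts all solutions near the origin.

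For the asymptotics I would substitute the power series $\eta(\varepsilon)=\sum_k \eta_k\varepsilon^k$, etc., into $F(\eta(\varepsilon),\zeta(\varepsilon),c(\varepsilon),\varepsilon)=0$ and collect powers of $\varepsilon$, inverting $L$ order by order. Collecting the $\varepsilon^1$-terms gives $L$ applied to the first-order coefficients equal to $-\partial_\varepsilon F(0,0,0,0)$. Here the two crucial facts are that $\restr{\Phi}{y=0}=0$ from \Cref{prop:phi_properties}, which forces $\partial_\varepsilon F_2(0,0,0,0)=\Phi(\cdot,0)=0$, and that the explicit $\varepsilon$-derivative of the quadratic block of $F_1$ vanishes (its base arguments $\zeta'$ and $G(\eta)\zeta$ are zero), leaving only $\partial_\varepsilon F_3=-c_1$. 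Solving yields vanishing first-order coefficients for $\eta$ and $\zeta$ and leading velocity coefficient $c_1$, which accounts for the leading orders in \Cref{eq:asymptotic_point_vortex}. Collecting the $\varepsilon^2$-terms, one uses that $\restr{\Phi}{y=0}=0$ also gives $\Phi_x(\cdot,0)=0$, so the $\Phi_x^2$ contribution to the quadratic term of $F_1$ disappears, and one is left with $(g-\alpha^2\partial_x^2)\eta_2 = -\big(c_1\Phi_y(\cdot,0)+\tfrac12\Phi_y(\cdot,0)^2\big)=-\chi$, i.e. $\eta_2=-(g-\alpha^2\partial_x^2)^{-1}\chi$; the matching orders of $F_2$ and $F_3$ then produce $\zeta_3$ and $c_3$.

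The main obstacle I anticipate is not the existence step, which is essentially forced once $L$ is identified, but the bookkeeping in the asymptotic expansion: one must track carefully which powers of $\varepsilon$ each term of $F_1$, $F_2$, $F_3$ contributes, using repeatedly that evaluating $\Phi$ and its derivatives on the nearly flat surface reduces them to their traces on $y=0$, where $\Phi$ and its tangential derivative vanish. Getting the orders to line up as $\eta=O(\varepsilon^2)$, $\zeta=O(\varepsilon^3)$, $c=O(\varepsilon)$ is what makes the stated coefficients $\eta_2,\zeta_3,c_3$ emerge cleanly.
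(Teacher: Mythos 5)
Your proposal is correct and follows essentially the same route as the paper: linearize $F$ at the origin, observe the block-triangular operator with diagonal $(g-\alpha^2\partial_x^2,\, I,\, 1)$ whose invertibility rests on the Fourier multiplier $g+\alpha^2\xi^2$ being bounded away from zero, apply the implicit function theorem for existence and local uniqueness, and obtain the expansion coefficients by substituting power series into $F=0$. Indeed, your order-by-order bookkeeping (using $\restr{\Phi}{y=0}=0$, hence $\Phi_x(\cdot,0)=0$) supplies the details that the paper's proof only sketches with the phrase ``the higher-order terms can be obtained by inserting expansions.''
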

    \begin{proof}
        As remarked at the end of \Cref{section:functional_analytic_setting}, the origin is a trivial solution. In order to apply the implicit function theorem, we require the first partial derivatives of $F$ at this point. A direct calculation yields
        \begin{equation}
            \label{eq:point_vortex_map_derivative_X}
            D_XF(0,0,0,0) = \begin{bmatrix}
                g - \alpha^2 \partial_x^2 & 0 & 0\\
                0 & I_{H_\text{even}^s(\R)} & 0\\
                0 & [H(0)\cdot]_y(0,-(1-\theta)h) & 1
            \end{bmatrix},
        \end{equation}
        where the subscript $X$ denotes the partial derivative with respect to the variable $(\eta, \zeta,c)$ in $X^s$.
        
        Now, every operator on the diagonal of $D_XF(0,0,0,0)$ is an isomorphism. Indeed, the operator
        \[
            \map{[g-\alpha^2\partial_x^2]}{H_\text{even}^s(\R)}{H_\text{even}^{s-2}(\R)}
        \]
        corresponds to the Fourier multiplier $g +\alpha^2 \xi^2$. Since $g,\alpha^2 >0$, this operator is invertible, with inverse corresponding to the multiplier $(g + \alpha^2 \xi^2)^{-1}$. The other two operators on the diagonal are identity operators, and therefore trivially invertible. Hence $D_XF(0,0,0,0) \in B(X^s, Y^s)$ is also an isomorphism.
        
        Thus we can use the implicit function theorem to conclude that there is an open interval $I$ containing zero, an open set $V \subseteq U_\theta^s$ containing $(0,0,0)$, and a map $f \in C^\infty(I,V)$ such that for $(\eta,\zeta,c,\varepsilon) \in V \times I$, we have
        \[
            F(\eta,\zeta,c,\varepsilon) = 0 \iff (\eta,\zeta,c) = f(\varepsilon).
        \]
        Furthermore, we obtain
        \[
            Df(0) = -D_XF(0,0,c_1,0)^{-1}D_\varepsilon F(0,0,c_1,0) = \begin{bmatrix}
                0\\
                0\\
                c_1
            \end{bmatrix},
        \]
        which yields the first order terms in \Cref{eq:asymptotic_point_vortex}. The higher-order terms can be obtained by inserting expansions for $\eta(\varepsilon), \zeta(\varepsilon)$ and $c(\varepsilon)$ into the equation $F(\eta(\varepsilon), \zeta(\varepsilon), c(\varepsilon), \epsilon)=0$. This concludes the proof of the theorem.
    \end{proof}
    \begin{remark}
        Because \Cref{thm:existence_point_vortex} holds for any $s > 3/2$, we can get arbitrarily high regularity on the solutions, by possibly making the interval $I$ smaller. We have not been able to conclude that they are smooth, however, since the interval could possibly shrink to a point as $s \to \infty$.
    \end{remark}
    
    Observe that, because $c_1$ changes sign at $\theta = 1/2$, the direction in which the waves obtained in \Cref{thm:existence_point_vortex} will travel (for small $\varepsilon$) depends on where the point vortex is in relation to the line $y = -h/2$. This does not come into play for waves on infinite depth. Note that if $\theta = 1-1/h$ (when $h > 1$) then
    \[
        c_1 = -\frac{1}{4\pi} + O(1/h^2)
    \]
    as $h \to \infty$, which is in agreement with what was found in \cite{Shatah2013} for a point vortex situated at $(0,-1)$ on infinite depth.
    
    Since $c_1$ vanishes when $\theta=1/2$, also the next term in the expansion for $c(\varepsilon)$ is of interest. We gave an expression for $c_3$ in \Cref{thm:existence_point_vortex}, but have not determined its sign yet. We will treat the sign of $c_3$ after \Cref{theorem:asymptotic_behavior_of_eta}, which establishes some properties of the function $\eta_2$.

    Written out, we have
    \begin{equation}
        \label{eq:chi_written_out}
        \chi(x)=\frac{1}{8h^2}\frac{1+\cos(\pi\theta)\cosh(\pi x/h)}{(\cosh(\pi x/h) + \cos(\pi \theta))^2},
    \end{equation}
    for the function $\chi$ defined in \Cref{thm:existence_point_vortex}. We will have use for the fact that $\chi$ has an elementary antiderivative $\ad{\chi}{1}$ and a double antiderivative $\ad{\chi}{2}$ given by
    \begin{equation}
        \label{eq:chi_antiderivatives}
        \begin{aligned}
            \ad{\chi}{1}(x)&=\frac{1}{8\pi h}\frac{\sinh(\pi x/h)}{\cosh(\pi x/h)+\cos(\pi\theta)},\\
            \ad{\chi}{2}(x)&=\frac{1}{8\pi^2}\log(\cosh(\pi x/h)+\cos(\pi\theta)),
        \end{aligned}
    \end{equation}
    respectively. While there in general seems to be no nice closed form of the leading order surface profile
    \[
        \eta_2 = -(g-\alpha^2 \partial_x^2)^{-1}\chi
    \]
    obtained in \Cref{thm:existence_point_vortex}, we can still give some of its properties. An immediate one is that $\eta_2$ is smooth. In \Cref{prop:eta_point_vortex_explicit} we give a series expansion for $\eta_2$ in powers of $e^{-\pi\abs{x}/h}$. Furthermore, perhaps more surprisingly, we \emph{can} find an explicit expression for $\eta_2$ in terms of elementary functions whenever
    \begin{equation}
        \label{eq:definition_of_m}
        m \coloneqq \frac{\sqrt{g}h}{\pi\alpha}.
    \end{equation}
    is a natural number. If $m \in \N$, then $e^{\pm\sqrt{g}x/\alpha}=e^{\pm m\pi x/h}$ are integral powers of $e^{\pm \pi x/h}$, which would appear on the right side of \Cref{eq:chi_written_out} if we had written out $\cosh(\pi x/h)$ and $\sinh(\pi x/h)$. Since $x \mapsto e^{\pm\sqrt{g}x/\alpha}$ spans the kernel of $g-\alpha^2\partial_x^2$, this explains integral values of $m$ being special.
    
    Before we state \Cref{prop:eta_point_vortex_explicit} and \Cref{theorem:asymptotic_behavior_of_eta}, we need a lemma to simplify some expressions.
    \begin{lemma}
        \label{lemma:explicit_expression_for_eta_coefficients}
        For $m \in (0,\infty)\setminus \N$ and $\theta \in (0,1)$, we have
        \begin{equation}
            \label{eq:eta_coefficient_explicit_not_integer}
            \frac{1}{m} + 2m\sum_{k=1}^\infty (-1)^k \frac{\cos(k\pi\theta)}{m^2-k^2}=\pi\frac{\cos(m\pi\theta)}{\sin(m\pi)},
        \end{equation}
        which is equal to
        \begin{equation}
            \label{eq:eta_coefficient_explicit_0_1_integral}
            \int_0^\infty y^{m-1}\frac{\cos(\pi\theta)y+1}{y^2+2\cos(\pi\theta)y+1}\,dy
        \end{equation}
        whenever $m \in (0,1)$. Furthermore, for $m \in \N$,
        \begin{equation}
            \label{eq:eta_coefficient_explicit_integer}
            \frac{1}{m}+2m\sum_{\substack{k=1\\k\neq m}}^\infty(-1)^k\frac{\cos(k\pi\theta)}{m^2-k^2} = -(-1)^m\left(\frac{\cos(m\pi\theta)}{2m} +\pi\theta \sin(m\pi \theta)\right).
        \end{equation}
    \end{lemma}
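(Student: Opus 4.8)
The plan is to read all three identities off a single classical Fourier expansion, and then treat the integer case as a careful limit of the noninteger one.

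First I would establish \eqref{eq:eta_coefficient_explicit_not_integer} from the Fourier cosine series of $x\mapsto\cos(mx)$ on $[-\pi,\pi]$ for $m\notin\N$. Computing the coefficients with the product-to-sum identity $\cos(mx)\cos(kx)=\tfrac12[\cos((m-k)x)+\cos((m+k)x)]$ and the relation $\sin((m\pm k)\pi)=(-1)^k\sin(m\pi)$ gives
\[
    \cos(mx)=\frac{\sin(m\pi)}{\pi}\left[\frac1m+2m\sum_{k=1}^\infty\frac{(-1)^k\cos(kx)}{m^2-k^2}\right].
\]
Evaluating at $x=\pi\theta\in(0,\pi)$ and dividing through by $\sin(m\pi)/\pi$, which is nonzero precisely because $m\notin\N$, yields \eqref{eq:eta_coefficient_explicit_not_integer} at once.

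Next, for the integral \eqref{eq:eta_coefficient_explicit_0_1_integral} I would factor the denominator as $(y+e^{i\pi\theta})(y+e^{-i\pi\theta})$ and split by partial fractions, obtaining
\[
    \frac{\cos(\pi\theta)y+1}{y^2+2\cos(\pi\theta)y+1}=\frac12\left(\frac{e^{i\pi\theta}}{y+e^{i\pi\theta}}+\frac{e^{-i\pi\theta}}{y+e^{-i\pi\theta}}\right).
\]
The standard Mellin integral $\int_0^\infty y^{m-1}(y+a)^{-1}\,dy=\pi a^{m-1}/\sin(m\pi)$, valid for $0<m<1$ and $|\arg a|<\pi$, applies to each summand (here $a=e^{\pm i\pi\theta}$ with $\pi\theta\in(0,\pi)$). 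Summing the two contributions and using $e^{im\pi\theta}+e^{-im\pi\theta}=2\cos(m\pi\theta)$ collapses the integral to $\pi\cos(m\pi\theta)/\sin(m\pi)$, which is exactly the right-hand side of \eqref{eq:eta_coefficient_explicit_not_integer}; this proves \eqref{eq:eta_coefficient_explicit_0_1_integral}.

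Finally, for the integer case \eqref{eq:eta_coefficient_explicit_integer} I would fix $n:=m\in\N$ and let $m\to n$ in \eqref{eq:eta_coefficient_explicit_not_integer} after peeling off the resonant $k=n$ term from the series. Writing $\sin(m\pi)=(-1)^n\sin(\pi(m-n))$ and Taylor expanding $\cos(m\pi\theta)$, $\sin(\pi(m-n))$, and the prefactor $2m/(m+n)$ to first order in $m-n$, I expect the $1/(m-n)$ poles coming from the resonant term and from $\pi\cos(m\pi\theta)/\sin(m\pi)$ to cancel, leaving the finite value $-(-1)^n(\cos(n\pi\theta)/(2n)+\pi\theta\sin(n\pi\theta))$, which is \eqref{eq:eta_coefficient_explicit_integer}. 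The main obstacle is precisely this limit: one must expand both singular pieces to the correct order so that their difference stays finite, and justify taking the limit termwise in the remaining sum $\sum_{k\neq n}$, which is permissible since its tail is $O(k^{-2})$ uniformly near $m=n$.
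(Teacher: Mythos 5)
Your proof is correct, but it follows a genuinely different route from the paper's. The paper makes the integral \eqref{eq:eta_coefficient_explicit_0_1_integral} the centerpiece of the argument: it observes that both sides of \eqref{eq:eta_coefficient_explicit_not_integer} are meromorphic in $m$ with simple poles at the integers, shows that each equals that integral for $m\in(0,1)$ (the closed form via the residue theorem around a keyhole contour, the series via a Laurent expansion of the integrand), and then invokes the identity theorem to upgrade the equality to all of $\C$; the integer case \eqref{eq:eta_coefficient_explicit_integer} is then obtained by taking limits, just as you do. You instead prove \eqref{eq:eta_coefficient_explicit_not_integer} directly, for every real non-integer $m$, from the classical Fourier cosine expansion of $\cos(mx)$ on $[-\pi,\pi]$ (evaluation at the interior point $x=\pi\theta$ is legitimate since the periodic extension is continuous and piecewise smooth, so the series converges there), and you verify \eqref{eq:eta_coefficient_explicit_0_1_integral} separately by partial fractions together with the Mellin integral $\int_0^\infty y^{m-1}(y+a)^{-1}\,dy=\pi a^{m-1}/\sin(m\pi)$ for $0<m<1$, $\abs{\arg a}<\pi$; both computations check out, including the exact cancellation of the two $1/(m-n)$ poles in your limit argument and the uniform $O(k^{-2})$ tail bound that justifies the termwise passage to the limit. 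As for what each approach buys: yours is more elementary and self-contained, avoiding analytic continuation and contour integration around branch cuts, and it spells out the integer-case limit that the paper merely asserts; the paper's is more compact, yields \eqref{eq:eta_coefficient_explicit_not_integer} for all complex $m$ off the poles rather than just real $m$, and ties the integral organically into the proof---which is natural, since that same integral reappears later in establishing the decay rate of $\eta_2$ in the regime $m<1$.
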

    \begin{proof}
        Both sides of \Cref{eq:eta_coefficient_explicit_not_integer} define meromorphic functions in $m$ on $\C$ with simple poles in the points $\Z \times \{0\}$. Moreover, they are both equal to the integral in \Cref{eq:eta_coefficient_explicit_0_1_integral} when $m \in (0,1)$, which can be seen by calculating the integral with both the residue theorem (around a keyhole contour) and a Laurent series expansion of the integrand. Since the interval consists of non-isolated points, we have equality on all of $\C$. Finally, \Cref{eq:eta_coefficient_explicit_integer} follows from \Cref{eq:eta_coefficient_explicit_not_integer} by taking limits.
    \end{proof}
    
    \begin{proposition}[Expansion for $\eta_2$]
        \label{prop:eta_point_vortex_explicit}
        If the number $m$ in \Cref{eq:definition_of_m} satisfies $m \in (0,\infty)\setminus \N$, then the leading order term of the surface profile from \Cref{thm:existence_point_vortex} is given by
        \begin{multline*}
            \eta_2(x)=\frac{1}{8\pi^2\alpha^2}\bigg[\log(1+2\cos(\pi\theta)e^{-\pi\abs{x}/h}+e^{-2\pi\abs{x}/h})\\-\pi\frac{\cos(m\pi\theta)}{\sin(m\pi)}e^{-\sqrt{g}\abs{x}/\alpha} + 2m^2\sum_{k=1}^\infty (-1)^k\frac{\cos(k\pi\theta)}{k(m^2-k^2)}e^{-k\pi\abs{x}/h}\bigg],
        \end{multline*}
        while if $m \in \N$, then
        \begin{multline*}
            \eta_2(x)=\frac{1}{8\pi^2\alpha^2}\biggl[\log(1+2\cos(\pi\theta)e^{-\pi\abs{x}/h}+e^{-2\pi\abs{x}/h})\\+(-1)^m\left(\frac{3\cos(m\pi\theta)}{2m} +\pi\theta \sin(m\pi \theta)+\cos(m\pi\theta) \frac{\pi \abs{x}}{h}\right)e^{-\sqrt{g}\abs{x}/\alpha} \\+ 2m^2\sum_{\substack{k=1\\k\neq m}}^\infty (-1)^k\frac{\cos(k\pi\theta)}{k(m^2-k^2)}e^{-k\pi\abs{x}/h}\biggr].
        \end{multline*}
        These series converge uniformly, and, excluding the origin, so do the series for the first derivative. Moreover, when $m \in \N$, the function $\eta_2$ is given explicitly in terms of elementary functions by
        \begin{multline*}
            \eta_2(x) = \frac{1}{8\pi^2\alpha^2}\biggl[\frac{1}{m} + 2\sum_{k=1}^{m-1} (-1)^{m-k}\frac{\cos((m-k)\pi\theta)}{k}\cosh((m-k)\pi x/h)\\+r(e^{\pi x/h})+r(e^{-\pi x/h})\biggr],
        \end{multline*}
        where $\map{r}{(0,\infty)}{\R}$ is defined by
        \begin{multline*}
            r(x) \coloneqq \frac{1}{2}(-1)^m \cos(m\pi\theta)x^{-m}\log(1+2\cos(\pi\theta)x + x^2)\\+(-1)^m\sin(m\pi\theta)x^{-m}(\arctan(\cot(\pi\theta)+\csc(\pi\theta)x)-\pi(1/2-\theta)).
        \end{multline*}
    \end{proposition}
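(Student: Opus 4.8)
The plan is to recognise that, by \Cref{thm:existence_point_vortex}, $\eta_2 = -(g-\alpha^2\partial_x^2)^{-1}\chi$ is the unique solution in $H^s(\R)$ (in fact the smooth solution) of the linear equation $\alpha^2\eta_2'' - g\eta_2 = \chi$, and to produce the stated formula as an explicit evaluation of this resolvent. The computational engine is the action of $(g-\alpha^2\partial_x^2)^{-1}$ on the pure exponentials $e^{-a\abs{x}}$. Writing $\mu \coloneqq \sqrt{g}/\alpha = m\pi/h$ with $m$ as in \Cref{eq:definition_of_m}, solving the ODE on each half-line and matching so that the even solution is $C^1$ gives, for $a\neq\mu$,
\[
(g-\alpha^2\partial_x^2)^{-1}e^{-a\abs{x}} = \frac{1}{g-\alpha^2 a^2}\Bigl(e^{-a\abs{x}} - \tfrac{a}{\mu}e^{-\mu\abs{x}}\Bigr),
\]
together with the resonant formula $(g-\alpha^2\partial_x^2)^{-1}e^{-\mu\abs{x}} = \tfrac{1}{2\alpha^2\mu}(\abs{x}+\mu^{-1})e^{-\mu\abs{x}}$, which is exactly what will produce the $\abs{x}e^{-\sqrt{g}\abs{x}/\alpha}$ contribution in the integer case.

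Next I would expand $\chi$ itself. Substituting $q = e^{-\pi\abs{x}/h}$ into the double antiderivative $\ad{\chi}{2}$ of \Cref{eq:chi_antiderivatives} and using $\log(1+2\cos(\pi\theta)q+q^2) = -2\sum_{k\geq 1}\tfrac{(-1)^k}{k}\cos(k\pi\theta)q^k$, the linear-in-$\abs{x}$ and constant pieces of $\ad{\chi}{2}$ are killed by $\partial_x^2$ away from the origin, leaving $\chi(x) = -\tfrac{1}{4h^2}\sum_{k\geq 1}(-1)^k k\cos(k\pi\theta)e^{-k\pi\abs{x}/h}$ for $x\neq 0$. Applying the resolvent termwise and collecting the $e^{-k\pi\abs{x}/h}$ and $e^{-\mu\abs{x}}$ pieces reproduces the structure of the claimed formula: the logarithm, the series $\sum \tfrac{(-1)^k\cos(k\pi\theta)}{k(m^2-k^2)}e^{-k\pi\abs{x}/h}$, and a single multiple of $e^{-\sqrt{g}\abs{x}/\alpha}$ whose coefficient is the series that \Cref{lemma:explicit_expression_for_eta_coefficients} evaluates in closed form as $-\pi\cos(m\pi\theta)/\sin(m\pi)$.

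Since the formal series for $\chi$ does not converge at the origin, the cleanest rigorous route is to reverse the logic and \emph{verify} the ansatz. I would check that applying $g-\alpha^2\partial_x^2$ to the claimed right-hand side returns $-\chi$ for $x\neq 0$ (a one-line cancellation using $g-\alpha^2 k^2\pi^2/h^2 = \tfrac{\alpha^2\pi^2}{h^2}(m^2-k^2)$), and that the candidate lies in $H^s(\R)$: decay is immediate, and the only subtlety is smoothness at $x=0$. Here \Cref{lemma:explicit_expression_for_eta_coefficients} enters precisely as the statement that the jump of $\eta_2'$ at the origin — which is, up to a nonzero constant, $\tfrac1m + 2m\sum_{k\geq 1}(-1)^k\tfrac{\cos(k\pi\theta)}{m^2-k^2} - \pi\tfrac{\cos(m\pi\theta)}{\sin(m\pi)}$ — vanishes, the coefficient of $e^{-\sqrt{g}\abs{x}/\alpha}$ being tuned exactly to cancel the kinks of the other terms. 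With no $\delta$ produced at the origin, uniqueness of the $H^s$ solution identifies the ansatz with $\eta_2$. The integer case is obtained by letting $m'\to m\in\N$: the resonant term and the $k=m$ term each blow up like $(\sin m'\pi)^{-1}$, differentiating $e^{-m'\pi\abs{x}/h}$ in $m'$ yields the $\abs{x}$ factor, and the finite limit is governed by \Cref{eq:eta_coefficient_explicit_integer}. Uniform convergence is a Weierstrass $M$-test, the coefficients being $O(k^{-3})$, with an extra factor $k$ (hence $O(k^{-2})$, and uniform only away from the origin, where the $\sgn$ from differentiating $\abs{x}$ is discontinuous) for the differentiated series.

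Finally, for the elementary closed form when $m\in\N$, the plan is to resum by partial fractions $\tfrac{1}{k(m^2-k^2)} = \tfrac{1}{m^2 k} + \tfrac{1}{2m^2(m-k)} - \tfrac{1}{2m^2(m+k)}$: the $1/k$ piece reconstitutes the logarithm, the finitely many terms near $k=m$ give the $\cosh((m-k)\pi x/h)$ sum, and the tails $\sum \tfrac{(-1)^k\cos(k\pi\theta)}{m\pm k}q^k$ are evaluated by writing $(-1)^k\cos(k\pi\theta) = \re[(-e^{i\pi\theta})^k]$ and using $\sum_{k\geq 1}\tfrac{z^k}{m+k} = -z^{-m}\log(1-z) - \sum_{j=1}^{m}z^{j-m}/j$ with $z=-qe^{i\pi\theta}$. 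Taking real parts turns $\log(1+qe^{i\pi\theta})$ into the $\log(1+2\cos(\pi\theta)q+q^2)$ and $\arg(1+qe^{i\pi\theta})$ pieces of $r$, the latter equal to $\arctan(\cot(\pi\theta)+\csc(\pi\theta)q) - \pi(\tfrac12-\theta)$ after an elementary arctangent identity. I expect this last resummation — tracking which growing exponentials in $r(e^{\pi x/h})$ and $r(e^{-\pi x/h})$ cancel the finite $\cosh$ sum so that $\eta_2$ decays — to be the most laborious step, while the genuine conceptual obstacle is the resonant bookkeeping at the origin, which \Cref{lemma:explicit_expression_for_eta_coefficients} is designed to dispatch.
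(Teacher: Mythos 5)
Your proposal is correct, and it takes a genuinely different route from the paper's proof. The paper computes forward: it represents $\eta_2$ as the convolution $-\tfrac{1}{2\alpha\sqrt g}\,e^{-\sqrt g\abs{\cdot}/\alpha}*\chi$, integrates by parts onto the antiderivatives in \Cref{eq:chi_antiderivatives}, substitutes $z=e^{\pi x/h}$, and evaluates the resulting integrals explicitly --- by partial fractions for the elementary integer-$m$ expression, and by expanding $\log((z+z^{-1})/2+\cos(\pi\theta))$ in powers of $z^{\pm 1}$ and integrating termwise for the series form --- invoking \Cref{lemma:explicit_expression_for_eta_coefficients} only at the end, to put the coefficient of $e^{-\sqrt g\abs{x}/\alpha}$ in closed form. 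You instead verify the stated expression as an ansatz for $(g-\alpha^2\partial_x^2)u=-\chi$: the cancellation via $g-\alpha^2k^2\pi^2/h^2=\tfrac{\alpha^2\pi^2}{h^2}(m^2-k^2)$ away from the origin checks out, $g-\alpha^2\partial_x^2$ is indeed an isomorphism of $H_\text{even}^s(\R)$ onto $H_\text{even}^{s-2}(\R)$ so uniqueness closes the argument, and --- the nicest feature of your reading --- \Cref{lemma:explicit_expression_for_eta_coefficients} is revealed as precisely the condition that the kinks of the individual exponentials cancel at $x=0$, so that no Dirac mass is produced; this explains structurally why the resonant coefficient must equal $-\pi\cos(m\pi\theta)/\sin(m\pi)$, something the paper's computation leaves implicit. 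Your caution about the termwise-resolvent motivation is also warranted: applied literally it produces the coefficient $\sum_k(-1)^kk^2\cos(k\pi\theta)/(m^2-k^2)$ for $e^{-\sqrt g\abs{x}/\alpha}$, whose terms do not tend to zero, so the verification step carries all the rigor, exactly as you say. One small repair: for $m\in\N$ your limit $m'\to m$ must be carried to second order, since the leading pole cancellation only yields the $\pi\theta\sin(m\pi\theta)$ and $\cos(m\pi\theta)\pi\abs{x}/h$ terms, while $\tfrac{3}{2m}\cos(m\pi\theta)$ comes from the $O(1)$ parts of $1/\sin(m'\pi)$ and of $2m'^2/\bigl(m(m'+m)(m'-m)\bigr)$; it would be cleaner, and entirely within your toolkit, to verify the integer ansatz directly using your resonant formula $(g-\alpha^2\partial_x^2)^{-1}e^{-\mu\abs{x}}=\tfrac{1}{2\alpha^2\mu}(\abs{x}+\mu^{-1})e^{-\mu\abs{x}}$, with \Cref{eq:eta_coefficient_explicit_integer} now supplying the jump cancellation. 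Your final resummation by partial fractions of $1/(k(m^2-k^2))$ is correct and is, in effect, the paper's partial-fraction integration read in reverse.
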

    \begin{proof}
        It follows from
        \[
            \mathscr{F}(e^{-a\abs{\cdot}})(\xi) = \sqrt{\frac{2}{\pi}}\frac{a}{a^2+\xi^2}, \quad a > 0,
        \]
        and the definition of $\eta_2$, that we may write $\eta_2$ as the convolution
        \begin{align}
            \eta_2(x)&=-\frac{1}{2\alpha\sqrt{g}}(e^{-\sqrt{g}\abs{\cdot}/\alpha}*\chi)(x)\notag\\
            &=-\frac{1}{2\alpha \sqrt{g}}\left(J(x,\chi) + J(-x,\chi)\right), \label{eq:eta_convolution_written_out}
        \end{align}
        where
        \[
            J(x,\chi) \coloneqq e^{-\sqrt{g}x/\alpha}\int_{-\infty}^x e^{\sqrt{g}y/\alpha}\chi(y)\,dy.
        \]
        Equivalently
        \begin{align}
            \label{eq:eta_convolution_first_antiderivative}
            \eta_2(x)& =\frac{1}{2\alpha^2}\left(J(x,\ad{\chi}{1}) + J(-x,\ad{\chi}{1})\right)\\
            \label{eq:eta_convolution_second_antiderivative}
            &=\frac{1}{\alpha^2}\ad{\chi}{2}(x)-\frac{\sqrt{g}}{2\alpha^3}\left(J(x,\ad{\chi}{2})+J(-x,\ad{\chi}{2})\right)
        \end{align}
        through integration by parts, where $\ad{\chi}{1}$ and $\ad{\chi}{2}$ are the antiderivatives defined in \Cref{eq:chi_antiderivatives}.
        
        We first use \Cref{eq:eta_convolution_first_antiderivative} to obtain an explicit expression for $\eta_2$ when $m \in \N$. By using the substitution $x \mapsto e^{\pi x/h}$, we find that
        \[
            J(x,\ad{\chi}{1}) = \frac{1}{8\pi^2}f_1(e^{\pi x/h}), \quad f_1(x)\coloneqq x^{-m}\int_0^x z^{m-1} \frac{z^2-1}{z^2+2\cos(\pi\theta)z+1}\,dz.
        \]
        The fraction in the integrand in the definition of $f_1$ has partial fraction decomposition
        \[
            \frac{z^2-1}{z^2+2\cos(\pi\theta)z+1} = 1- \frac{e^{i\pi\theta}}{z+e^{i\pi\theta}} - \frac{e^{-i\pi\theta}}{z+e^{-i\pi\theta}},
        \]
        and since
        \[
            z^{m-1}\frac{a}{z+a} = -\frac{(-a)^m}{z+a} - \sum_{k=0}^{m-2} (-a)^{m-k-1}z^k,\quad a \in \C, z \neq -a,
        \]
        this means that
        \begin{multline*}
            f_1(x) = \frac{1}{m} + (-1)^m e^{im\pi\theta}x^{-m}\log(x+e^{i\pi\theta})+(-1)^me^{-im\pi\theta}x^{-m}\log(x+e^{-i\pi\theta})\\+2(-1)^m\pi\theta\sin(m\pi\theta)x^{-m}+2\sum_{k=1}^{m-1}\frac{(-1)^k\cos(k\pi\theta)}{m-k}x^{-k},
        \end{multline*}
        where $\log(\cdot)$ denotes the principal branch of the logarithm. The result now follows by using the identity
        \begin{multline*}
            \log(x+e^{i\pi\theta}) = \frac{1}{2}\log(1+2\cos(\pi\theta)x+x^2)\\-i(\arctan(\cot(\pi\theta)+\csc(\pi\theta)x)-\pi/2),
        \end{multline*}
        valid for all $x \in \R$.
        
        For the series representation of $\eta_2$, we use \Cref{eq:eta_convolution_second_antiderivative}, because this leads to a series that converges more rapidly. We will assume that $m \in (0,\infty) \setminus \N$; the case for $m \in \N$ is similar, except that one needs to use \Cref{eq:eta_coefficient_explicit_integer} instead of \Cref{eq:eta_coefficient_explicit_not_integer}. We use the same substitution as before to arrive at
        \[
            J(x,\ad{\chi}{2}) = \frac{\alpha}{8\pi^2 \sqrt{g}} f_2(e^{\pi x/h}),
        \]
        where $\map{f_2}{(0,\infty)}{\R}$ is defined by
        \[
            f_2(x)\coloneqq mx^{-m}\int_0^x z^{m-1}\log((z^{-1}+z)/2+\cos(\pi\theta))\,dz.
        \]
        One may check that one has
        \[
            \log((z^{-1}+z)/2+\cos(\pi\theta)) =-\log(2)-\log(z) -2 \sum_{k=1}^\infty (-1)^k\frac{\cos(k\pi\theta)}{k}z^k
        \]
        for $z \in (0,1)$ and
        \[
            \log((z^{-1}+z)/2+\cos(\pi\theta)) =-\log(2)+\log(z) -2 \sum_{k=1}^\infty (-1)^k\frac{\cos(k\pi\theta)}{k}z^{-k}
        \]
        for $z \in (1,\infty)$.
        
        It then follows by termwise integration that
        \[
            f_2(x) = \frac{1}{m}-\log(2)-\log(x)-2m\sum_{k=1}^\infty(-1)^k \frac{\cos(k\pi\theta)}{k(m+k)}x^k
        \]
        on $(0,1]$ (the endpoint is Abel's theorem \cite[Theorem 17.14]{Markushevich1965}), and that
        \begin{align*}
            f_2(x) &= f_2(1)x^{-m} + mx^{-m}\int_1^x z^{m-1}\log((z+z^{-1})/2+\cos(\pi\theta))\,dz\\
            &=\begin{multlined}[t]-\frac{1}{m}-\log(2)+\log(x)-2m\sum_{k=1}^\infty (-1)^k \frac{\cos(k\pi\theta)}{k(m-k)}x^{-k}\\-\left(\frac{2}{m}+4m\sum_{k=1}^\infty (-1)^k\frac{\cos(k\pi\theta)}{m^2-k^2}\right)x^{-m}.
            \end{multlined}
        \end{align*}
        for $x \in [1,\infty)$. Employing \Cref{eq:eta_convolution_second_antiderivative}, we find that $\eta_2$ is given by
        \[
            \begin{multlined}
                \eta_2(x)=\frac{1}{8\pi^2\alpha^2}\bigg[\log(\cosh(\pi x/h)+\cos(\pi\theta))-\pi\abs{x}/h+\log(2)\\-\left(\frac{1}{m}+2m\sum_{k=1}^\infty (-1)^k\frac{\cos(k\pi\theta)}{m^2-k^2}\right)e^{-\sqrt{g}\abs{x}/\alpha}\\+ 2m^2\sum_{k=1}^\infty (-1)^k\frac{\cos(k\pi\theta)}{k(m^2-k^2)}e^{-k\pi\abs{x}/h}\bigg]
            \end{multlined}
        \]
        for all $x \in \R$, by using that $\eta_2$ is even and observing that for $x \geq 0$ we have $e^{\pi x/h} \in [1,\infty)$ and $e^{-\pi x/h} \in (0,1]$. If we now apply \Cref{eq:eta_coefficient_explicit_not_integer} from \Cref{lemma:explicit_expression_for_eta_coefficients} in order to get a closed-form expression for the coefficient in front of $e^{-\sqrt{g}\abs{x}/\alpha}$, we arrive at the desired expansion.
    \end{proof}
    \begin{remark}
        The only obstacle to convergence of the series given in \Cref{prop:eta_point_vortex_explicit} is the origin; thanks to the exponential factor $e^{-k\pi \abs{x}/h}$, the convergence is rapid away from the origin. It should also be noted that, while \Cref{eq:chi_written_out} seems to suggest that $\eta_2$ should be expandable in a series of powers of $\sech(\pi x/h)$ by equating coefficients in the differential equation defining it, this seems to lead to a series that does not converge. We have kept the series expansion for $\eta_2$ also when $m \in \N$, because the expression in terms of elementary functions is unwieldy, and prone to numerical errors even for small values of $m$.
    \end{remark}
    
    The expressions found in \Cref{prop:eta_point_vortex_explicit} have well defined pointwise limits as $\theta \uparrow 1$ (for $x \neq 0$) and $\theta \downarrow 0$. In particular, when $m =1$ these are given by
    \begin{align*}
        \lim_{\theta \downarrow 0} \eta_2(x) &= \frac{1}{8\pi^2\alpha^2}[1-e^{\pi x/h}\log(1+e^{-\pi x/h}) - e^{-\pi x/h}\log(1+e^{\pi x/h})]\\
        \lim_{\theta \uparrow 1} \eta_2(x) &=\frac{1}{8\pi^2\alpha^2}[1 + e^{\pi x/h}\log{\abs{1-e^{-\pi x/h}}} + e^{-\pi x/h}\log{\abs{1-e^{\pi x/h}}}],
    \end{align*}
    which can can be seen as graphs drawn with thicker lines in \Cref{fig:surface_profile_point_vortex}, together with $\eta_2$ for various values of the parameter $\theta$.
    
    \begin{figure}[htb]
        \centering
        \includegraphics{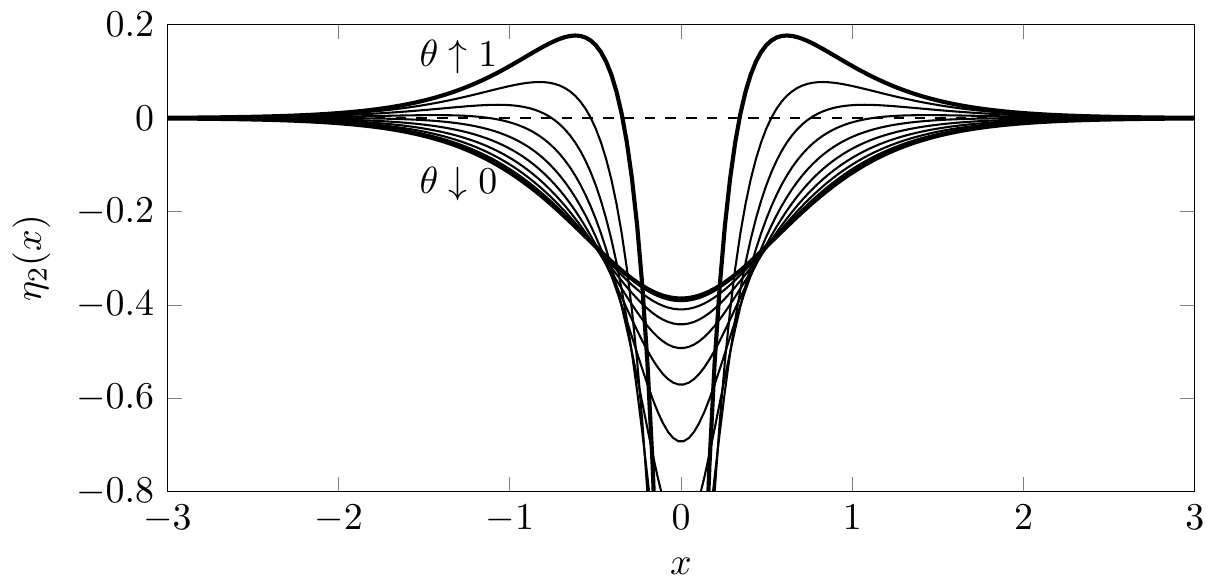}
        \caption{The leading order term $\eta_2$ in $\eta(\varepsilon)$, with $h=1, \alpha^2=1/(8\pi^2), m=1$. The values of $\theta$ shown are $\theta = 0.1,0.2,\ldots,0.9$, together with the thicker lower and upper limits $\theta \downarrow 0$ and $\theta \uparrow 1$.}
        \label{fig:surface_profile_point_vortex}
    \end{figure}
    
    We see from \Cref{fig:surface_profile_point_vortex} that one gets a depression at the origin, which becomes more pronounced the closer the point vortex is situated to the surface. The profile when the point vortex is close to the surface is very similar to the profile for the infinite depth case, found in \cite{Shatah2013}. However, a feature which is not seen on infinite depth is that there is a significant difference between the case $\theta \leq 1/2$ and the case $\theta > 1/2$ (in addition to the changing sign of $c_1$). For $\theta \leq 1/2$ there is a single trough at the origin, and $\eta_2$ is everywhere strictly negative. When $\theta > 1/2$ one in addition gets crests on either side of the origin. As we can see from \Cref{fig:surface_profile_point_vortex}, the positions of these crests depend on the position of the point vortex.
    
    Some of what we have just discussed is not limited to the specific choice of constants that are used in \Cref{fig:surface_profile_point_vortex}, and for which \Cref{prop:eta_point_vortex_explicit} yields an explicit expression for $\eta_2$. We will see that $m=1$ plays a special role in the asymptotic behavior of $\eta_2$, however.  More precisely, we have the following theorem:
    \begin{theorem}[Properties of $\eta_2$]
        \label{theorem:asymptotic_behavior_of_eta}
        The leading order surface term $\eta_2$ always satisfies $\eta_2(0) < 0$ and $\eta_2''(0) > 0$, meaning that the origin is a depression. When $\theta \leq 1/2$, the function $\eta_2$ is everywhere negative, and strictly increasing on $[0,\infty)$. For $\theta>1/2$, we have two cases, depending on the number $m$ defined in \Cref{eq:definition_of_m}:
        \begin{enumerate}[(i)]
            \item If $m > 1/(2\theta)$, then $\eta_2(x)$ is positive for sufficiently large $\abs{x}$. In particular, $\eta_2$ has crests on either side of the origin.
            \item If $m \leq 1/(2\theta)$, then $\eta_2(x)$ is negative for sufficiently large $\abs{x}$.
        \end{enumerate}
        Furthermore, $\eta_2$ has the following asymptotic properties for any $\theta \in (0,1)$:
        \begin{enumerate}[(i)]
            \item For $m>1$
                \begin{equation}
                    \label{eq:eta_limit_m_larger_than_one}
                    \lim_{x\to \infty} \eta_2(x) e^{\pi x/h} = -\frac{2}{m^2-1}\frac{\cos(\pi\theta)}{8\pi^2\alpha^2}.
                \end{equation}
            \item If $m=1$, then
                \[
                    \lim_{x\to \infty} \eta_2(x) \frac{e^{\pi x/h}}{\pi x/h} = -\frac{\cos(\pi\theta)}{8\pi^2\alpha^2}.
                \]
            \item For $m < 1$
                \begin{equation}
                    \label{eq:eta_limit_m_less_than_one}
                    \lim_{x\to \infty} \eta_2(x) e^{\sqrt{g}x/\alpha} = -\frac{\pi}{\sin(m\pi)}\frac{\cos(m\pi\theta)}{8\pi^2\alpha^2}.
                \end{equation}
        \end{enumerate}
    \end{theorem}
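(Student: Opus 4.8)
The plan is to reduce every assertion to the sign behaviour of $\chi$ and of $\chi'$ on $[0,\infty)$, exploiting that the convolution kernel $e^{-\sqrt{g}\abs{\cdot}/\alpha}$ appearing in \eqref{eq:eta_convolution_written_out} is strictly positive. First I would record the elementary facts. By \eqref{eq:chi_written_out} the even function $\chi$ has denominator $8h^2(\cosh(\pi x/h)+\cos(\pi\theta))^2>0$, so $\sgn\chi(x)=\sgn(1+\cos(\pi\theta)\cosh(\pi x/h))$; hence $\chi>0$ everywhere when $\theta\le 1/2$, whereas for $\theta>1/2$ the numerator is positive near the origin and negative for large $\abs{x}$, so $\chi$ changes sign exactly once on $(0,\infty)$, from positive to negative. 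Writing $u=\cosh(\pi x/h)$, differentiation gives $\sgn\chi'(x)=\sgn\!\big(-(\cos(\pi\theta)u+2-\cos^2(\pi\theta))\big)$ for $x>0$, which is negative throughout for $\theta\le 1/2$ and changes sign exactly once, from negative to positive, for $\theta>1/2$. Finally the antiderivative $\chi^{\sharp}$ of \eqref{eq:chi_antiderivatives} yields the normalisations $\int_0^\infty\chi\,dy=\chi^{\sharp}(\infty)=\tfrac{1}{8\pi h}>0$ and $\chi(0)=\tfrac{1}{8h^2(1+\cos(\pi\theta))}>0$.

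For the depression at the origin I evaluate \eqref{eq:eta_convolution_written_out} at $x=0$, obtaining $\eta_2(0)=-\tfrac{1}{\alpha\sqrt{g}}\int_0^\infty e^{-ay}\chi(y)\,dy$ with $a=\sqrt{g}/\alpha$. Since $\chi$ is positive-then-negative with a single crossing $y_0$ and $e^{-ay}$ is strictly decreasing, a rearrangement (Chebyshev-type) inequality gives $\int_0^\infty e^{-ay}\chi\,dy\ge e^{-ay_0}\int_0^\infty\chi\,dy=\tfrac{e^{-ay_0}}{8\pi h}>0$, so $\eta_2(0)<0$ (the case $\theta\le 1/2$ being the degenerate one with no crossing). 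For the second derivative I combine the defining relation $\alpha^2\eta_2''=g\eta_2+\chi$ with the integration by parts $a\int_0^\infty e^{-ay}\chi\,dy=\chi(0)+\int_0^\infty e^{-ay}\chi'\,dy$ to obtain the clean identity $\alpha^2\eta_2''(0)=g\eta_2(0)+\chi(0)=-\int_0^\infty e^{-ay}\chi'(y)\,dy$. The same inequality, applied now to $\chi'$ (negative-then-positive, single crossing $y^{\ast}$) against the decreasing weight, gives $\int_0^\infty e^{-ay}\chi'\,dy\le e^{-ay^{\ast}}\int_0^\infty\chi'\,dy=-e^{-ay^{\ast}}\chi(0)<0$, so $\eta_2''(0)>0$.

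For $\theta\le 1/2$ the global sign is immediate, since $\chi>0$ convolved with a positive kernel forces $\eta_2<0$ everywhere. For the monotonicity I differentiate \eqref{eq:eta_convolution_written_out} and symmetrise to the representation $\eta_2'(x)=\tfrac{1}{2\alpha^2}\int_0^\infty e^{-as}\big(\chi(x-s)-\chi(x+s)\big)\,ds$. Because $\chi$ is even and strictly decreasing on $[0,\infty)$ when $\theta\le 1/2$, and $\abs{x-s}<x+s$ for all $x,s>0$, each integrand is strictly positive, whence $\eta_2'(x)>0$ on $(0,\infty)$.

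The remaining statements are asymptotic, and I read them off the series of \Cref{prop:eta_point_vortex_explicit}, using $e^{-\sqrt{g}\abs{x}/\alpha}=e^{-m\pi\abs{x}/h}$. Expanding the logarithm as $2\cos(\pi\theta)e^{-\pi\abs{x}/h}+O(e^{-2\pi\abs{x}/h})$ and comparing decay rates identifies the slowest-decaying contribution: for $m>1$ the $e^{-\pi\abs{x}/h}$ terms (logarithm plus the $k=1$ summand) combine to coefficient $-\tfrac{2}{m^2-1}\tfrac{\cos(\pi\theta)}{8\pi^2\alpha^2}$, giving \eqref{eq:eta_limit_m_larger_than_one}; for $m=1$ the term $\cos(\pi\theta)\tfrac{\pi\abs{x}}{h}e^{-\pi\abs{x}/h}$ gives the stated $m=1$ limit; and for $m<1$ the term $-\pi\tfrac{\cos(m\pi\theta)}{\sin(m\pi)}e^{-\sqrt{g}\abs{x}/\alpha}$ gives \eqref{eq:eta_limit_m_less_than_one}. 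The sign dichotomy for $\theta>1/2$ then follows by inspecting this leading coefficient: for $m\ge 1$ it is positive (as $\cos(\pi\theta)<0$), and for $m<1$ its sign is that of $-\cos(m\pi\theta)$, positive precisely when $m\theta>1/2$; hence $\eta_2$ is eventually positive, and by $\eta_2(0)<0$ and evenness exhibits crests on either side, exactly when $m>1/(2\theta)$. The delicate point I expect to require the most care is the borderline $m=1/(2\theta)$ (necessarily $m<1$), where $\cos(m\pi\theta)=0$ kills the slowest exponential: there one must descend to the $e^{-\pi\abs{x}/h}$ terms, whose combined coefficient $-\tfrac{2\cos(\pi\theta)}{(m^2-1)8\pi^2\alpha^2}$ is negative for $m<1$ and $\theta>1/2$, placing this case in (ii); together with the nonconstructive positivity of $\eta_2''(0)$ for $\theta>1/2$ (where $g\eta_2(0)+\chi(0)$ is a difference of terms of opposite sign, salvaged only by the integration-by-parts identity above), this is the crux of the argument.
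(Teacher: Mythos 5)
Your proposal is correct, and every auxiliary fact you rely on checks out: the sign analysis of $\chi$ and $\chi'$ via the substitution $u=\cosh(\pi x/h)$, the rearrangement inequality (its proof is just pointwise positivity of $(e^{-ay}-e^{-ay_0})\chi(y)$), the identity $\alpha^2\eta_2''(0)=-\int_0^\infty e^{-\sqrt{g}y/\alpha}\chi'(y)\,dy$, the symmetrized formula for $\eta_2'$, and the identification of leading coefficients in the series. However, your route differs genuinely from the paper's in several places. For the signs at the origin, the paper integrates by parts once and uses positivity of the antiderivative $\chi^{\sharp}$ from \Cref{eq:chi_antiderivatives} to get $\eta_2(0)<0$ in one line (no case distinction on $\theta$), and obtains $\eta_2''(0)>0$ by asserting that $\chi$ attains its global maximum at the origin; your single Chebyshev-type mechanism, applied once to $\chi$ and once to $\chi'$, replaces both steps, at the cost of establishing the single-crossing structure — though note the paper's global-maximum claim for $\theta>1/2$ implicitly requires the same analysis of $\chi'$ that you carry out explicitly. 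For the monotonicity on $[0,\infty)$ when $\theta\le 1/2$, your representation $\eta_2'(x)=\tfrac{1}{2\alpha^2}\int_0^\infty e^{-\sqrt{g}s/\alpha}\left(\chi(x-s)-\chi(x+s)\right)ds$ is cleaner than the paper's formula involving $\sinh$ factors; both ultimately rest on $\chi'<0$ on $(0,\infty)$. The largest divergence is in the asymptotics: the paper derives all three limits directly from the convolution \Cref{eq:eta_convolution_written_out} by L'H\^opital's rule, invoking \Cref{lemma:explicit_expression_for_eta_coefficients} to identify the constant when $m<1$, whereas you read them off the series of \Cref{prop:eta_point_vortex_explicit}. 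This is legitimate (that proposition precedes the theorem and its proof is independent of it) and handles integer and non-integer $m$ uniformly, but you should make the tail estimate explicit: uniform convergence of the series does not survive multiplication by $e^{\pi x/h}$, so one needs that the $k\ge 2$ part is $O(e^{-2\pi x/h})$, which follows since the coefficients decay like $k^{-3}$ and $e^{-k\pi x/h}\le e^{-2\pi x/h}$ there. Your treatment of the borderline case $m=1/(2\theta)$ — descending to the $e^{-\pi x/h}$ scale, where the combined coefficient $-\tfrac{2\cos(\pi\theta)}{m^2-1}$ is negative for $m<1$ and $\theta>1/2$ — is exactly the paper's resolution, transposed from the L'H\^opital computation to the series.
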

    \begin{proof}
        We first prove that $\eta_2(0) < 0$ and $\eta_2''(0)>0$, which holds for all values of $m$ and $\theta$. By inserting $x=0$ in \Cref{eq:eta_convolution_written_out}, and using the evenness of $\chi$, we find
        \begin{align*}
            \eta_2(0) &= -\frac{1}{\alpha\sqrt{g}} \int_0^\infty e^{-\sqrt{g}y/\alpha} \chi(y)\,dy\\
            &=-\frac{1}{\alpha^2}\int_0^\infty \underbrace{e^{-\sqrt{g}y/\alpha}\ad{\chi}{1}(y)}_{\text{$>0$ on $(0,\infty)$}}\,dy < 0,
        \end{align*}
        where the second equality follows from integration by parts, and the function $\ad{\chi}{1}$ was defined in \Cref{eq:chi_antiderivatives}. Since $\eta_2=-(g-\alpha^2\partial_x^2)^{-1}\chi$, we also have
        \begin{align*}
            \eta_2''(0) &= \frac{1}{\alpha^2}(g\eta_2(0)+\chi(0))\\
            &=\frac{\sqrt{g}}{2\alpha^3}\int_{-\infty}^\infty e^{-\sqrt{g}\abs{y}/\alpha} (\chi(0) - \chi(y))\,dy\\
            &>0,
        \end{align*}
        as $\chi$ achieves a global maximum at the origin.
        
        Suppose now that $\theta \leq 1/2$. Like in \Cref{prop:eta_point_vortex_explicit}, we use the fact that $\eta_2$ may be written as the convolution
        \begin{equation}
            \label{eq:eta_as_convolution}
            \begin{aligned}
                \eta_2 &= -\frac{1}{2\alpha \sqrt{g}} (e^{-\sqrt{g}\abs{\cdot}/\alpha}*\chi),\\
            \end{aligned}
        \end{equation}
        which shows that $\eta_2$ is strictly negative, since $\chi$ is strictly positive when $\theta \leq 1/2$. Moreover, some manipulations of the above formula shows that we may write the derivative of $\eta_2$ as
        \begin{multline*}
            \eta_2'(x) = -\frac{1}{\alpha\sqrt{g}}\left[\sinh\left(\frac{\sqrt{g}}{\alpha}x\right)\int_x^\infty e^{-\sqrt{g}y/\alpha} \chi'(y)\,dy\right. \\ \left.+ e^{-\sqrt{g}x/\alpha}\int_0^x \sinh\left(\frac{\sqrt{g}}{\alpha}y\right)\chi'(y)\,dy\right],
        \end{multline*}
        where we have used the fact that $\chi$ is even. One may check that $\chi'$ is strictly negative for $x > 0$ when $\theta \leq 1/2$. This shows that $\eta_2'$ is strictly positive for $x > 0$, and so $\eta_2$ is strictly increasing on $[0,\infty)$ by the mean value theorem.
        
        Before we consider the case $\theta > 1/2$, we prove the asymptotic properties for $\eta_2$ listed in \Crefrange{eq:eta_limit_m_larger_than_one}{eq:eta_limit_m_less_than_one}. These follow by multiplying each side in \Cref{eq:eta_convolution_written_out} with the appropriate factor and taking limits. For instance, suppose that $m>1$, meaning that $\sqrt{g}/\alpha > \pi/h$. For the integral in
        \[
            e^{\pi x/h} \left(e^{-\sqrt{g}x/\alpha} \int_{-\infty}^x e^{\sqrt{g}y/\alpha}\chi(y)\,dy\right) = \frac{\int_{-\infty}^x e^{\sqrt{g}y/\alpha}\chi(y)\,dy}{e^{(\sqrt{g}/\alpha -\pi/h)x}}
        \]
        there are two possibilities: If $\theta=1/2$, then it is possible that the integrand is integrable on the entire real line, meaning that the limit as $x \to \infty$ is zero; otherwise, the integral tends to $\pm \infty$, and so
        \begin{align*}
            \lim_{x \to \infty} \frac{\int_{-\infty}^x e^{\sqrt{g}y/\alpha}\chi(y)\,dy}{e^{(\sqrt{g}/\alpha -\pi/h)x}} &= \lim_{x \to \infty} \frac{e^{\sqrt{g}x/\alpha}\chi(x)}{(\sqrt{g}/\alpha-\pi/h)e^{(\sqrt{g}/\alpha -\pi/h)x}}\\
            &= \frac{1}{\sqrt{g}/\alpha - \pi/h} \frac{\cos(\pi\theta)}{4h^2}
        \end{align*}
        by L'Hôpital's rule. The other limits can be treated in a similar way, with one exception:
        
        The procedure will show that when $m < 1$, we have
        \begin{align*}
            \lim_{x\to \infty} \eta_2(x)e^{\sqrt{g}x/\alpha} &= -\frac{1}{2\alpha\sqrt{g}}\int_{-\infty}^\infty e^{\sqrt{g}y/\alpha}\chi(y)\,dy\notag\\
            &=-\frac{1}{8\pi^2\alpha^2 m}\int_0^\infty y^m \frac{\cos(\pi\theta)y^2+2y+\cos(\pi\theta)}{(y^2+2\cos(\pi\theta)y+1)^2}\,dy\\
            &=-\frac{1}{8\pi^2\alpha^2}\int_0^\infty y^{m-1} \frac{\cos(\pi\theta)y+1}{y^2+2\cos(\pi\theta) y +1}\,dy
        \end{align*}
        where the second and third equality follows from the substitution $y \mapsto e^{\pi y/h}$ and an integration by parts, respectively. The result now follows since the integral on the final line is equal to the right-hand side of \Cref{eq:eta_coefficient_explicit_not_integer} by \Cref{lemma:explicit_expression_for_eta_coefficients}.
        
        Finally, we consider the case of $\theta > 1/2$, which is harder to describe completely, as the integrand in the convolution in \Cref{eq:eta_as_convolution} changes sign. Observe that the claims on the sign of $\eta_2(x)$ for sufficiently large $x$ follows for $m \neq 1/(2\theta)$ from the limits in \Crefrange{eq:eta_limit_m_larger_than_one}{eq:eta_limit_m_less_than_one}. An additional argument is needed for the edge case $m = 1/(2\theta)$, because the limit in \Cref{eq:eta_limit_m_less_than_one} vanishes. It turns out that \Cref{eq:eta_limit_m_larger_than_one} also holds in the special case $m = 1/(2\theta)$, which can be shown with the same method we used to show the other limits. Hence $\eta_2$ is negative for sufficiently large $x$ when $m = 1/(2\theta)$, which exhausts the values of $m$.
    \end{proof}
    \begin{remark}
        It is likely that $\eta_2$ has similar properties to those for the case $\theta \leq 1/2$ when $\theta > 1/2$ and $m \leq 1/(2\theta)$, but we have not been able to prove this.
    \end{remark}
    
    We are now in a position where we can give the sign of $c_3$ in the expansion in \Cref{thm:existence_point_vortex} for $\theta \leq 1/2$.
    
    \begin{proposition}[Sign of $c_3$]
        \label{prop:velocity_sign}
        The constant $c_3$ in \Cref{eq:asymptotic_point_vortex} is negative when $\theta \leq 1/2$. In particular, if $\theta = 1/2$ and $\varepsilon$ is sufficiently small, the waves obtained in \Cref{thm:existence_point_vortex} are left-moving when $\varepsilon > 0$ and right-moving when $\varepsilon < 0$.
    \end{proposition}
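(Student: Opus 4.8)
The plan is to reduce the sign of $c_3 = -[H(0)\zeta_3]_y(0,-(1-\theta)h)$ to two separate positivity facts: that the Dirichlet datum $\zeta_3$ is positive, and that the relevant $y$-derivative of the Poisson kernel of the flat strip is positive. First I would record the sign of $\zeta_3 = -\eta_2(c_1 + \Phi_y(\cdot,0))$. By \Cref{theorem:asymptotic_behavior_of_eta}, $\eta_2 < 0$ everywhere when $\theta \le 1/2$, so the factor $-\eta_2$ is positive. Differentiating the explicit expression for $\Phi$ in \Cref{prop:phi_properties} gives
\[
    \Phi_y(x,0) = \frac{\sin(\pi\theta)}{2h\,(\cosh(\pi x/h)+\cos(\pi\theta))},
\]
which is strictly positive, while $c_1 = \tfrac{1}{4h}\cot(\pi\theta) \ge 0$ for $\theta \in (0,1/2]$. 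Hence $c_1 + \Phi_y(\cdot,0) > 0$, and therefore $\zeta_3 > 0$ on all of $\R$.

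Next I would express the interior normal derivative through the Poisson kernel $\mathcal{P}(\cdot,y)$ for the flat strip $\Omega(0) = \R \times (-h,0)$, namely the one producing the harmonic extension with datum on $y=0$ and vanishing trace on $y=-h$. Writing $H(0)\zeta_3 = \zeta_3 * \mathcal{P}(\cdot,y)$ (convolution in $x$) and using that both $\zeta_3$ and $\mathcal{P}(\cdot,y)$ are even in $x$, one obtains
\[
    [H(0)\zeta_3]_y(0,-(1-\theta)h) = \int_\R \zeta_3(x)\,\mathcal{P}_y(x,-(1-\theta)h)\,dx.
\]
It then remains to determine the sign of $\mathcal{P}_y(\cdot,-(1-\theta)h)$.

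The key computation is to evaluate this kernel derivative explicitly. Passing to Fourier variables as in the proof of \Cref{prop:eta_point_vortex_explicit} gives $\mathcal{P}_y(x,-(1-\theta)h) = \tfrac{1}{\pi h^2}\int_0^\infty t\,\tfrac{\cosh(\theta t)}{\sinh t}\cos(tx/h)\,dt$. Using $t\cos(tx/h) = \partial_s[\sin(ts)]|_{s=x/h}$, differentiating under the integral sign, and invoking the standard transform $\int_0^\infty \tfrac{\cosh(\theta t)}{\sinh t}\sin(ts)\,dt = \tfrac{\pi}{2}\tfrac{\sinh(\pi s)}{\cosh(\pi s)+\cos(\pi\theta)}$ (valid for $\abs{\theta}<1$, verifiable by residues), I arrive at
\[
    \mathcal{P}_y(x,-(1-\theta)h) = \frac{\pi}{2h^2}\,\frac{1+\cos(\pi\theta)\cosh(\pi x/h)}{(\cosh(\pi x/h)+\cos(\pi\theta))^2} = 4\pi\,\chi(x),
\]
with $\chi$ as in \Cref{eq:chi_written_out}. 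For $\theta \le 1/2$ we have $\cos(\pi\theta)\ge 0$, so this kernel is strictly positive. Combining the two positivity facts yields $c_3 = -4\pi\int_\R \zeta_3\,\chi\,dx < 0$. I expect this kernel evaluation and its positivity to be the main obstacle; once it is in hand, the conclusion is immediate. Note that for $\theta>1/2$ the numerator $1+\cos(\pi\theta)\cosh(\pi x/h)$ changes sign, which is exactly why the argument is confined to $\theta \le 1/2$.

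Finally, for the ``in particular'' assertion I would specialize to $\theta = 1/2$, where $c_1 = \tfrac{1}{4h}\cot(\pi/2) = 0$, so that \Cref{eq:asymptotic_point_vortex} collapses to $c(\varepsilon) = c_3\varepsilon^3 + O(\varepsilon^4)$. Since $c_3 < 0$, for sufficiently small $\varepsilon$ the sign of $c(\varepsilon)$ is that of $c_3\varepsilon^3$: negative (left-moving) for $\varepsilon>0$ and positive (right-moving) for $\varepsilon<0$, in accordance with the sign convention for $c$ fixed in \Cref{section:traveling_waves}.
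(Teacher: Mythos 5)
Your proposal is correct, but it takes a genuinely different route from the paper's proof. The paper argues softly: after establishing, as you do, that $\zeta_3>0$ --- but additionally that $\zeta_3$ is strictly decreasing on $[0,\infty)$ --- it applies the maximum principle and the Hopf boundary point lemma at the bottom to get $[H(0)\zeta_3]_y(0,-h)>0$, and then a second Hopf-lemma argument applied to $[H(0)\zeta_3]_x$ on the quarter strip $(0,\infty)\times(-h,0)$, combined with the harmonicity identity $[H(0)\zeta_3]_{yy}=-[H(0)\zeta_3]_{xx}$, to show that $[H(0)\zeta_3]_y$ is increasing along the $y$-axis; positivity at the vortex depth follows. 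You instead replace all of this qualitative analysis with a hard computation of the strip's Poisson kernel derivative, via the classical sine transform $\int_0^\infty \frac{\cosh(\theta t)}{\sinh t}\sin(ts)\,dt = \frac{\pi}{2}\frac{\sinh(\pi s)}{\cosh(\pi s)+\cos(\pi\theta)}$. I have checked your resulting identity $\mathcal{P}_y(\cdot,-(1-\theta)h)=4\pi\chi$, and it is correct; it can also be seen without the Fourier integral, since the symmetry of the Green's function of the strip gives $\mathcal{P}(\cdot,-(1-\theta)h)=\Phi_y(\cdot,0)$, and differentiating in $y$ amounts to applying $h^{-1}\partial_\theta$ to the explicit formula for $\Phi_y(\cdot,0)$. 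As for what each approach buys: the paper's argument is robust and would survive in geometries where no explicit kernel is available, but it requires the strict monotonicity of $\eta_2$ on $[0,\infty)$ from \Cref{theorem:asymptotic_behavior_of_eta}, not merely its sign; your argument needs only the sign of $\zeta_3$ and yields the quantitative formula $c_3=-4\pi\int_\R \zeta_3\chi\,dx$, which could in principle be used to estimate $\abs{c_3}$, at the modest price of justifying differentiation under the integral sign and the identification of $H(0)\zeta_3$ with the Poisson integral (both unproblematic here, the latter by the uniqueness in \Cref{eq:laplace_well_posed}). Your treatment of the ``in particular'' statement coincides with the paper's implicit reasoning.
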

    \begin{proof}
        Recall the definition of $\zeta_3$ in \Cref{eq:asymptotic_point_vortex}. From \Cref{theorem:asymptotic_behavior_of_eta} we know that $\eta_2$ is negative, and strictly increasing on $[0,\infty)$. Furthermore, the factor $c_1 + \Phi_y(\cdot,0)$ is positive and strictly decreasing on the same interval. It follows that also $\zeta_3$ is positive and strictly decreasing on $[0,\infty)$.
        
        The harmonic function $H(0)\zeta_3$ on $\R \times (-h, 0)$ assumes the value $0$ at the bottom of the domain and $\zeta_3>0$ at the top of the domain. By the maximum principle, it is positive on the entire domain. Thus we may use the Hopf boundary point lemma (see \cite[Lemma 3.4]{Gilbarg2001}) in order to conclude that $[H(0)\zeta_3]_y(0,-h) >0$. The result will therefore follow if we can show that $[H(0)\zeta_3]_y$ is increasing along the $y$-axis. We will do this by looking at $[H(0)\zeta_3]_x$ on $(0,\infty) \times (-h,0)$. Because of its values on the boundary, it is negative in the interior. Another application of the Hopf boundary point lemma implies that $[H(0)\zeta_3]_{xx}$ is negative on the $y$-axis (except at the point $(0,-h)$, where it vanishes). Since $[H(0)\zeta_3]_{yy} = - [H(0)\zeta_3]_{xx}$ by the harmonicity of $H(0)\zeta_3$, this concludes the proof.
    \end{proof}

    We finish our exposition on a single point vortex with a short discussion on the streamlines of waves obtained in \Cref{thm:existence_point_vortex}. Observe that if $(x(t),y(t))$ denotes the position of a fluid particle at time $t$, then
    \begin{equation}
        \label{eq:particle_before_scaling}
        (\dot{x}(t),\dot{y}(t)) = w(x(t),y(t),t),\\
    \end{equation}
    before the new variables in \Cref{section:traveling_waves}. After introducing the steady variables, \Cref{eq:particle_before_scaling} becomes
    \begin{equation}
        \label{eq:particle_after_scaling}
        (\dot{x}(t),\dot{y}(t)) = w(x(t),y(t)) - (c,0),
    \end{equation}
    meaning that if we only keep the first order terms for $w$ and $c$ from \Cref{thm:existence_point_vortex}, we obtain (keeping the same notation for the paths)
    \begin{equation}
        \label{eq:particle_first_order}
        (\dot{x}(t),\dot{y}(t)) = \varepsilon\nabla^\perp\left(\Phi + c_1 y\right)(x(t),y(t)).
    \end{equation}
    
    \begin{figure}[htb]
        \centering
        \begin{subfigure}[b]{0.495\linewidth}
            \includegraphics{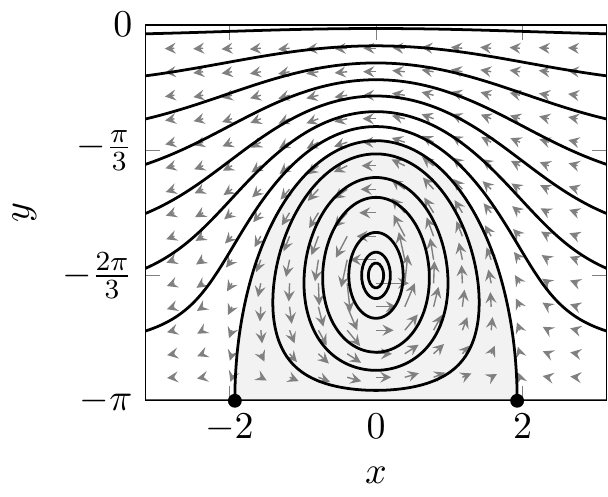}
            \caption{$\theta=1/3$}
            \label{subfig:particle_paths_theta_one_third}
        \end{subfigure}
        \begin{subfigure}[b]{0.495\linewidth}
            \includegraphics{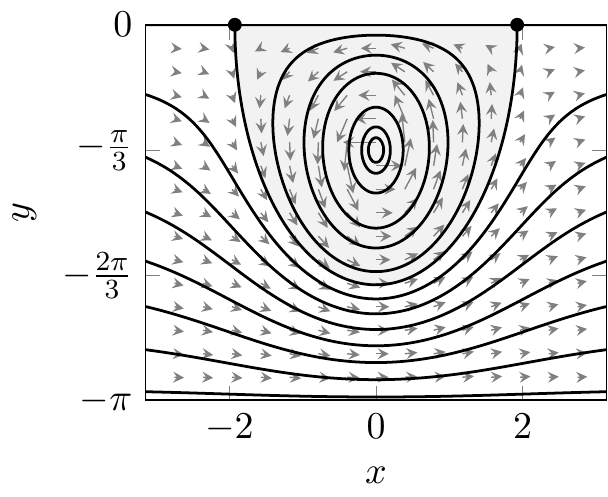}
            \caption{$\theta=2/3$}
            \label{subfig:particle_paths_theta_two_thirds}
        \end{subfigure}
        \caption{Streamlines in the frame of reference traveling with the wave, for $h=\pi$ and $\varepsilon > 0$. The wave corresponding to $\theta=1/3$ propagates to the right, while the wave corresponding to $\theta=2/3$ propagates to the left. The arrows illustrating the vector field on the right hand side of \Cref{eq:particle_first_order} have been scaled here for visibility, and only their direction is quantitatively accurate.}
        \label{fig:particle_paths}
    \end{figure}
    
    We have used this to obtain \Cref{fig:particle_paths}, which shows streamlines in the steady frame moving with the wave. The portraits corresponding to $\theta$ and $1-\theta$ can be obtained from each other by a \SI{180}{\degree} rotation. When $\theta=1/2$, all the streamlines are closed (not shown), so we will focus on the case $\theta \neq 1/2$.  The lines $y=-h$ and $y=0$ are nullclines for the system in \Cref{eq:particle_first_order}, and the points $(x,y)$ with
    \begin{equation}
        \label{eq:phase_portrait_equilibrium_points}
        \begin{aligned}
            x&= \pm h/\pi \arcosh(\abs{2\sin(\pi\theta)\tan(\pi\theta)+\cos(\pi\theta)}),\\ y&=\begin{cases}
                -h & \theta < 1/2\\
                0 & \theta > 1/2 
            \end{cases}
        \end{aligned}
    \end{equation}
    are equilibrium points, corresponding to stagnation points. One may check that
    \[
        h/\pi \arcosh(2\sin(\pi\theta)\tan(\pi\theta)+\cos(\pi\theta)) = \sqrt{3}h\theta + O(\theta^5)
    \]
    as $\theta \downarrow 0$, meaning that the distance between the equilibria is very close to linear in $\theta$ for small $\theta$ (a corresponding statement holds for $1-\theta$ small). They go off to infinity as $\theta \to 1/2$ from either side. The heteroclinic orbit (which can be expressed explicitly in terms of $\arcosh$) connecting the two equilibrium points described in \Cref{eq:phase_portrait_equilibrium_points} encloses a critical layer containing closed streamlines. Outside this region the particles always move in the same direction with respect to the steady frame. This direction is either to the left or right depending on the sign of $\cot(\pi\theta)$ and $\varepsilon$.
    
    We also mention that on infinite depth, the streamlines always look like those in \Cref{subfig:particle_paths_theta_two_thirds}. If the point vortex is situated at $(0,-d)$, the equilibrium points at the surface will be at $(\pm \sqrt{3}d,0)$, and the points on the heteroclinic orbit between these satisfy
    \[
        x^2+(y+d)^2=2dy(1+\coth(y/(2d))),
    \]
    which is close to half an ellipse centered at $(0,-d)$ with semiaxes $\sqrt{3}d$ and $\sim 2.0873 d$. The equilibrium points in \Cref{eq:phase_portrait_equilibrium_points} converge to those on infinite depth as $h \to \infty$ if $d$ is held fixed. 
    
    Because only the first order terms in $\varepsilon$ have been kept in \Cref{eq:particle_first_order}, we do not make any claim about the accuracy of the phase portraits in \Cref{fig:particle_paths} for the full system in \Cref{eq:particle_after_scaling}. That would require further and more thorough analysis, in particular for the case $\theta = 1/2$. Still, the phase portraits can give some indication as to how these waves look beneath the surface. One feature will remain the same for \Cref{eq:particle_after_scaling}: Because of the singularity of $\Phi$ at $(0,-(1-\theta)h)$, the streamlines will always remain closed sufficiently close to the point vortex.
    
\section{Several point vortices}
    \label{section:several_point_vortices}
    We aim to extend the existence result for traveling waves with a single point vortex in \Cref{thm:existence_point_vortex} to a finite number of point vortices on the $y$-axis. As opposed to the single vortex case, where we could choose $\theta$ freely, there will be limitations on the positions that the point vortices can occupy. We will return to this. Suppose that
    \[
        1 > \theta_1 > \theta_2 > \cdots > \theta_n > 0,
    \]
    and that we wish to establish the existence of a traveling wave with point vortex at the points
    \[
        (0,-(1-\theta_1)h), \ldots, (0,-(1-\theta_n)h),
    \]
    the situation being otherwise similar to that of a single point vortex. The admissible surface profiles are those in $\Lambda_{\theta_1}$, as the uppermost point vortex is the most restrictive.
    
    For $\eta \in \Lambda_{\theta_1}$ and $\gamma = (\gamma^1,\ldots,\gamma^n) \in \R^n$ we may define
    \begin{equation}
        \label{eq:phi_gamma}
        \Phi^\gamma \coloneqq \sum_{j=1}^n \gamma^j \Phi^j,
    \end{equation}
    where
    \[
        \Phi^j(x,y) \coloneqq \frac{1}{4\pi}\log\left(\frac{\cosh(\pi x/h) + \cos(\pi(y/h-\theta_j))}{\cosh(\pi x/h) + \cos(\pi(y/h+\theta_j))}\right), \quad j=1,\ldots,n,
    \]
    in $\Omega(\eta)$. We will seek solutions of the form
    \[
        w = \nabla^\perp[H(\eta)\zeta + \Phi^\gamma],
    \]
    cf. \Cref{eq:velocity_fields_considered_point_vortex} for a single point vortex.
    
    The main difference from the single point vortex case is of course the vorticity equation, \Cref{eq:vorticity_transport_equation_weak_steady}, which needs to be imposed for each of the point vortices. For the $i$th point vortex, the vorticity equation reduces to
    %Use \cr instead of // in substack because of weird interaction with multlined
    \[
        (c,0)\begin{multlined}[t]=\nabla^\perp[H(\eta)\zeta](0,-(1-\theta_i)h)\\+ \frac{1}{4h}\biggl(\gamma^i\cot(\pi\theta_i) + \sum_{\substack{j=1 \cr j\neq i}}^n \gamma^j\left[\cot\left(\pi\frac{\theta_i + \theta_j}{2}\right)-\cot\left(\pi\frac{\theta_i-\theta_j}{2}\right)\right],0\biggr),
        \end{multlined}
    \]
    which, if we assume that $\eta$ and $\zeta$ are even (see the discussion before \Cref{eq:vorticity_equation_point_vortex}), can be written more succinctly as
    \begin{equation}
        \label{eq:vorticity_equation_multiple_vortices}
        c\mathbf{1} = -([H(\eta)\zeta]_y(0,-(1-\theta_i)h))_{i=1}^n + \Theta\gamma.
    \end{equation}
    Here, we have defined $\mathbf{1} \coloneqq (1,\ldots,1) \in \R^n$ and the matrix $\Theta \in \R^{n \times n}$ by
    \begin{equation}
        \label{eq:theta_matrix_definition}
        \Theta_{i,j} = \begin{dcases}
            \frac{1}{4h}\cot(\pi \theta_i) & i = j\\
            \frac{1}{4h}\left(\cot\left(\pi\frac{\theta_i + \theta_j}{2}\right)-\cot\left(\pi\frac{\theta_i-\theta_j}{2}\right)\right) & i \neq j
        \end{dcases}
    \end{equation}
    for $1 \leq i,j \leq n$.
    
    As opposed to for one vortex, it is now more natural to use the wave velocity $c$ as the bifurcation parameter. We will therefore write $\varepsilon$ instead of $c$ in order to have notation that is more consistent with the one vortex case. The idea is to use the vortex strengths $\gamma$ in order to balance \Cref{eq:vorticity_equation_multiple_vortices}, which is possible when $\Theta$ is invertible. It should be emphasized that this is almost always the case (\Cref{thm:point_vortex_configurations_measure_zero}), but that there always are configurations of $n$ point vortices that yield singular $\Theta$ (\Cref{prop:always_bad_configurations}). We have already seen such a configuration, albeit a trivial one: For the case $n=1$ one has $\Theta=0$ when $\theta = 1/2$.
    
    We make the necessary redefinitions
    \begin{align*}
        X^s &\coloneqq H_\text{even}^s(\R) \times H_\text{even}^s(\R) \times \R^n,\\
        Y^s &\coloneqq H_\text{even}^{s-2}(\R) \times  H_\text{even}^s(\R) \times \R^n,\\
        U_{\theta_1}^s &\coloneqq \left\{(\eta,\zeta, \gamma) \in X^s : \eta \in \Lambda_{\theta_1}\right\},
    \end{align*}
    and proceed to define, for $s > 3/2$, the map  $\map{F_1}{U_{\theta_1}^s \times \R}{H_\text{even}^{s-2}(\R)}$ by
    \begin{multline*}
        F_1(\eta, \zeta, \gamma, \varepsilon) = \varepsilon\left[\frac{\eta'\zeta'+G(\eta)\zeta}{\jb{\eta'}^2}+\Phi_y^\gamma\right]\\ +\frac{(\zeta' + (1,\eta') \cdot \nabla \Phi^\gamma)^2 +(G(\eta)\zeta +(-\eta',1)\cdot \nabla \Phi^\gamma)^2}{2\jb{\eta'}^2}
            + g\eta -\alpha^2\kappa(\eta),
    \end{multline*}
    the map $\map{F_2}{U_{\theta_1}^s \times \R}{H_\text{even}^s(\R)}$ by
    \[
        F_2(\eta,\zeta,\gamma,\varepsilon) \coloneqq \varepsilon\eta + \zeta+\Phi^\gamma,
    \]
    and finally the map $\map{F_3}{U_{\theta_1}^s \times \R}{\R^n}$ by
    \[
        F_3(\eta,\zeta,\gamma,\varepsilon)\coloneqq \Theta\gamma - \varepsilon\mathbf{1} - ([H(\eta)\zeta]_y(0,-(1-\theta_i)h))_{i=1}^n.
    \]
    In all of these definitions, the function $\Phi^\gamma$ and its derivatives are evaluated at $(x,\eta(x))$, which is suppressed for readability.
    
    We now define $F \coloneqq \map{(F_1, F_2, F_3)}{U_\theta^s}{Y^s}$, and seek solutions of the equation
    \begin{equation}
        \label{eq:zcs_formulation_several_point_vortices}
        F(\eta,\zeta,\gamma,\varepsilon)=0,
    \end{equation}
    which has the origin as a trivial solution. We are led to the following analog of \Cref{thm:existence_point_vortex} for several point vortices, establishing the existence of a family of small, localized solutions, assuming that $\Theta$ is nonsingular. The resulting waves have one critical layer for each point vortex, assuming that no component of $\gamma$ vanishes.
    
    \begin{theorem}[Traveling waves with several point vortices]
        \label{thm:existence_several_point_vortices}
        Let $s > 3/2$, and let $1 > \theta_1 > \theta_2 > \cdots > \theta_n > 0$. Suppose that the matrix $\Theta$ defined in \Cref{eq:theta_matrix_definition} is invertible. Then there exists an open interval $I \ni 0$ and a $C^\infty$-curve
        \[
            \arraycolsep=0.03\textwidth
            \begin{array}{ccc}
                I & \to & (H_\text{even}^s(\R) \cap \Lambda_{\theta_1}) \times H_\text{even}^s(\R) \times \R^n \times \R\\
                \varepsilon & \mapsto & (\eta(\varepsilon),\zeta(\varepsilon),\gamma(\varepsilon), \varepsilon)
            \end{array}
        \]
        of solutions with velocity $c=\varepsilon$ to the Zakharov--Craig--Sulem formulation, \Cref{eq:zcs_formulation_several_point_vortices}, for point vortices of strengths $\gamma^1(\varepsilon),\ldots, \gamma^n(\varepsilon)$ situated at
        \[
            (0,-(1-\theta_1)h), \ldots , (0,-(1-\theta_n)h).
        \]
        The solutions fulfil
        \begin{equation}
            \label{eq:multiple_point_vortices_asymptotic}
            \begin{aligned}
                \eta(\varepsilon)&= \eta_2\varepsilon^2 + O(\varepsilon^4),\\
                \zeta(\varepsilon)&= \zeta_3 \varepsilon^3 + O(\varepsilon^4),\\
                \gamma(\varepsilon)&= \gamma_1 \varepsilon + \gamma_3 \varepsilon^3 + O(\varepsilon^4),
            \end{aligned}
        \end{equation}
        in their respective spaces as $\varepsilon \to 0$, where $\gamma_1 \coloneqq \Theta^{-1}\mathbf{1}$, the function $\eta_2 \in H_\text{even}^s(\R)$ is defined by
        \begin{align*}
            \eta_2 \coloneqq -(g-\alpha^2\partial_x^2)^{-1}\chi, \quad \chi \coloneqq \Phi_y^{\gamma_1}(\cdot,0) + \frac{1}{2}\Phi_y^{\gamma_1}(\cdot,0)^2,
        \end{align*}
        and where
        \begin{align*}
            \zeta_3 &= - \eta_2(1+\Phi_y^{\gamma_1}(\cdot,0)),\\
            \gamma_3 &= \Theta^{-1}([H(0)\zeta_3]_y(0,-(1-\theta_i)h))_{i=1}^n,
        \end{align*}
        with $\Phi^{\gamma_1}$ as in \Cref{eq:phi_gamma} and $H$ as in \Cref{def:harmonic_extension_operator}.
        
        Moreover, there is a neighborhood of the origin in $U_\theta^s \times \R$ such that this curve describes all solutions to $F(\eta, \zeta, \gamma,\varepsilon)=0$ in that neighborhood.
    \end{theorem}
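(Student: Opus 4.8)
The plan is to mirror the proof of \Cref{thm:existence_point_vortex} and apply the implicit function theorem to $F=(F_1,F_2,F_3)$ at the trivial solution $(0,0,0,0)$, the only genuinely new ingredient being that the scalar bottom-right block of the single-vortex Jacobian is replaced by the $n\times n$ matrix $\Theta$. First I would compute $D_XF(0,0,0,0)$, where $X$ refers to the variable $(\eta,\zeta,\gamma)\in X^s$. As in the one-vortex case, the quadratic terms of $F_1$ do not contribute to the linearization at the origin, and since $\varepsilon=0$ there the term $\varepsilon[\cdots]$ drops out as well; only $g\eta-\alpha^2\kappa(\eta)$ survives, linearizing to the Fourier multiplier $g-\alpha^2\partial_x^2$ in the $\eta$-slot. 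Differentiating $F_2=\varepsilon\eta+\zeta+\Phi^\gamma$ gives the identity in the $\zeta$-slot, while the $\gamma$-derivative vanishes: by \Cref{prop:phi_properties} each $\Phi^j$ vanishes on $y=0$, so the trace $\Phi^\gamma(\cdot,0)$ on the flat surface $\eta=0$ is identically zero. Finally, $F_3=\Theta\gamma-\varepsilon\mathbf{1}-([H(\eta)\zeta]_y(0,-(1-\theta_i)h))_{i=1}^n$ has vanishing $\eta$-derivative at $\zeta=0$ (because $H(\eta)0=0$ for every $\eta$), derivative $-L$ in the $\zeta$-slot, where $\map{L}{H_\text{even}^s(\R)}{\R^n}$ is $L\zeta\coloneqq([H(0)\zeta]_y(0,-(1-\theta_i)h))_{i=1}^n$, and derivative $\Theta$ in the $\gamma$-slot.

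The resulting Jacobian is block lower-triangular,
\[
    D_XF(0,0,0,0) = \begin{bmatrix}
        g-\alpha^2\partial_x^2 & 0 & 0\\
        0 & I & 0\\
        0 & -L & \Theta
    \end{bmatrix},
\]
so it is an isomorphism exactly when its three diagonal blocks are. The first is invertible because $g,\alpha^2>0$ make $g+\alpha^2\xi^2$ a multiplier bounded away from zero, the second is the identity, and the third is invertible precisely by the standing hypothesis that $\Theta$ is nonsingular; this is the one place where the exclusion of the exceptional (measure-zero) vortex configurations enters. With $D_XF(0,0,0,0)\in B(X^s,Y^s)$ an isomorphism, the implicit function theorem produces an open interval $I\ni0$, a neighborhood $V$ of the origin in $U_\theta^s$, and a $C^\infty$ curve $\varepsilon\mapsto(\eta(\varepsilon),\zeta(\varepsilon),\gamma(\varepsilon))$ exhausting the solutions of $F=0$ in $V\times I$, which yields both existence and the local uniqueness statement.

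To read off the asymptotics I would exploit the symmetry $(\eta,\zeta,\gamma,\varepsilon)\mapsto(\eta,-\zeta,-\gamma,-\varepsilon)$, which leaves $F_1$ invariant and sends $F_2,F_3$ to $-F_2,-F_3$ (using $\Phi^{-\gamma}=-\Phi^\gamma$ and the linearity of $H(\eta)\cdot$); it therefore preserves the zero set, and by uniqueness of the curve this forces $\eta$ to be even and $\zeta,\gamma$ to be odd in $\varepsilon$. Combined with $Df(0)=-D_XF(0,0,0,0)^{-1}D_\varepsilon F(0,0,0,0)$, where $D_\varepsilon F(0,0,0,0)=(0,0,-\mathbf{1})$ gives $Df(0)=(0,0,\Theta^{-1}\mathbf{1})=(0,0,\gamma_1)$, the parities pin the leading orders to $\eta=\eta_2\varepsilon^2+O(\varepsilon^4)$, $\zeta=\zeta_3\varepsilon^3+O(\varepsilon^4)$, and $\gamma=\gamma_1\varepsilon+\gamma_3\varepsilon^3+O(\varepsilon^4)$. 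The explicit coefficients $\eta_2,\zeta_3,\gamma_3$ then follow by inserting these truncated expansions into $F(\eta(\varepsilon),\zeta(\varepsilon),\gamma(\varepsilon),\varepsilon)=0$ and matching powers of $\varepsilon$, exactly as in the single-vortex case.

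Candidly, the argument is a near-verbatim repetition of the proof of \Cref{thm:existence_point_vortex}, and once $\Theta$ is assumed invertible there is no genuinely hard step. The only point requiring care is confirming that the off-diagonal coupling in the Jacobian sits strictly below the diagonal; in particular that $D_\gamma F_2$ vanishes at the origin, which rests entirely on the boundary condition $\restr{\Phi^j}{y=0}=0$ from \Cref{prop:phi_properties}. Were that trace nonzero, the $(2,3)$ block would survive and one would have to invert a genuinely coupled operator rather than a triangular one. The remaining work, the bookkeeping that identifies the third-order coefficients $\zeta_3$ and $\gamma_3$, is the most laborious part but is entirely routine.
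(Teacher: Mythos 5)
Your proposal is correct and follows essentially the same route as the paper: the implicit function theorem applied to $F$ at the origin, with the block lower-triangular Jacobian $D_XF(0,0,0,0)$ whose diagonal blocks $g-\alpha^2\partial_x^2$, $I$, and $\Theta$ are invertible (the last precisely by hypothesis), yielding the curve, the local uniqueness, and $Df(0)=(0,0,\Theta^{-1}\mathbf{1})$. Your one genuine addition is the parity argument via the symmetry $(\eta,\zeta,\gamma,\varepsilon)\mapsto(\eta,-\zeta,-\gamma,-\varepsilon)$, which cleanly explains why $\eta$ contains only even and $\zeta,\gamma$ only odd powers of $\varepsilon$; the paper leaves this implicit, obtaining the expansion coefficients by inserting the ansatz into $F=0$ and matching powers, as in the single-vortex case.
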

    \begin{proof}
        As for a single point vortex, we wish to apply the implicit function theorem at the origin. We find the derivative
        \[
            D_X F(0,0,0,0) = \begin{bmatrix}
                g - \alpha^2 \partial_x^2 & 0 & 0\\
                0 & I_{H_\text{even}^s(\R)} & 0\\
                0 & -([H(0)\cdot]_y(0,-(1-\theta_i)h))_{i=1}^n & \Theta
            \end{bmatrix},
        \]
        where $([H(0)\cdot]_y(0,-(1-\theta_i)h))_{i=1}^n$ means the operator $H_\text{even}^s(\R) \to \R^n$ defined by
        \[
            \zeta \mapsto ([H(0)\zeta]_y(0,-(1-\theta_i)h))_{i=1}^n.
        \]
        Recalling that $g-\alpha^2\partial_x^2$ and $\Theta$ are invertible by the discussion after \Cref{eq:point_vortex_map_derivative_X} and by assumption, respectively, $D_XF(0,0,0,0)$ is an isomorphism.
        
        Hence we can use the implicit function theorem to deduce the existence of an open interval $I$ around zero, an open set $V \subseteq U_{\theta_1}^s$ containing the origin, and a map $f \in C^\infty(I,V)$ such that for $(\eta,\zeta,\gamma,\varepsilon) \in V \times I$, we have
        \[
            F(\eta,\zeta,\gamma,\varepsilon) = 0 \iff (\eta,\zeta,\gamma) = f(\varepsilon).
        \]
        The terms in the expansion in \Cref{eq:multiple_point_vortices_asymptotic} can be obtained as in the proof of \Cref{thm:existence_point_vortex}.
    \end{proof}
    \begin{remark}
        It is worth mentioning that on infinite depth, the matrix $\Theta$ is always invertible. A corresponding existence theorem for infinite depth would thus hold for any configuration.
    \end{remark}
    \begin{remark}
        An extension of the existence result in \Cref{thm:existence_several_point_vortices} to point vortices that are not all on the same vertical line would require a different argument than the one we have used. The main issue is that assuming $\eta$ and $\zeta$ to be even is then no longer sufficient to satisfy the vertical component of the vorticity equation, like we did to obtain \Cref{eq:vorticity_equation_multiple_vortices}.
    \end{remark}
    
    One may note that the sign reversal of the wave velocity about the midpoint $\theta=1/2$ that we saw with the single point vortex, can be seen also for several point vortices, albeit in a different manner. If the matrix $\Theta$ corresponds to $1 > \theta_1 > \cdots > \theta_n > 0$, and we reflect the vortices across the line $y=-h/2$ by considering $\vartheta_i \coloneqq 1 - \theta_i, 1 \leq i \leq n$ instead (without reordering them), then the new matrix is $-\Theta$. This causes a swap of sign on the leading order vortex strengths, $\gamma_1 = \Theta^{-1}\mathbf{1}$.
    
    We have pointed out that the matrix $\Theta$ is not invertible for all configurations of point vortices, and gave the trivial example of $\theta = 1/2$ for a single point vortex. This example, together with \Cref{thm:existence_point_vortex}, also shows that invertibility of $\Theta$ is not a necessary condition for the existence of a traveling wave with point vortices in those points. See also \Cref{remark:existence_when_det_theta_vanishes}.
    
    The only case for multiple point vortices on the $y$-axis where we can feasibly describe the admissible positions directly is for $n=2$. In fact, we give a complete description of when $\Theta$ is invertible in \Cref{prop:two_point_vortices}; see also \Cref{fig:two_point_vortices_positions}, which presents this result graphically. One may observe that the midpoint between the bottom and surface plays a role also here.
    
    \begin{figure}[htb]
        \centering
        \includegraphics{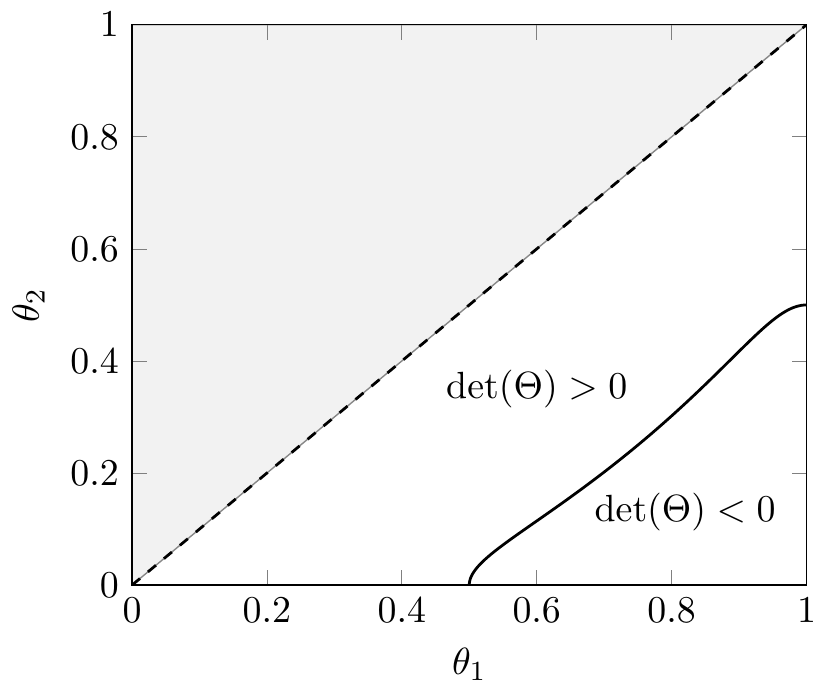}
        \caption{The determinant of $\Theta$ for the case $n=2$ as a function of $(\theta_1, \theta_2)$. The determinant vanishes along the solid black curve, which is given explicitly as a parametrization in \Cref{prop:two_point_vortices}. (In the figure, the level curve for $\det(\Theta)=0$ is computed numerically.)}
        \label{fig:two_point_vortices_positions}
    \end{figure}
    
    \begin{proposition}[$\Theta$ for $n=2$]
        \label{prop:two_point_vortices}
        For two point vortices, we have the following:
        \begin{enumerate}[(i)]
            \item If $\theta_1 \leq 1/2$, then $\Theta$ is invertible for all $\theta_2 \in (0,\theta_1)$.
            \item If $\theta_1 > 1/2$, then $\Theta$ is invertible for all $\theta_2 \in (0,\theta_1)$ except for exactly one value, $0<\hat{\theta}_2(\theta_1)<1/2$. The graph of $\map{\hat{\theta}_2}{\left(1/2,1\right)}{\left(0,1/2\right)}$ is described by the curve
            \[
                \arraycolsep=0.03\textwidth
                \begin{array}{ccc}
                    \left(\pi/4,3\pi/4\right) & \to & \left(1/2,1\right) \times \left(0,1/2\right)\\
                    t & \mapsto & (t+f(t),t-f(t))/\pi
                \end{array}
            \]
            where $\map{f}{\left(\pi/4,3\pi/4 \right)}{\R}$ is defined by
            \[
                f(x) \coloneqq \arccot\left(\sqrt{\frac{1}{2}\left(\cot(x)^2+\sqrt{4-3\cot(x)^4}\right)}\right).
            \]
        \end{enumerate}
    \end{proposition}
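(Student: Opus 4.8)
The plan is to compute $\det\Theta$ explicitly, reduce its vanishing to a single polynomial relation, and then analyse that relation in the two regimes $\theta_1\le 1/2$ and $\theta_1>1/2$. Throughout I would substitute $\sigma\coloneqq \pi(\theta_1+\theta_2)/2$ and $\tau\coloneqq \pi(\theta_1-\theta_2)/2$, so that $\pi\theta_1=\sigma+\tau$, $\pi\theta_2=\sigma-\tau$, and the admissible region $0<\theta_2<\theta_1<1$ becomes the open triangle $\{0<\tau<\sigma,\ \sigma+\tau<\pi\}$. Writing $s\coloneqq\cot\sigma$ and $t\coloneqq\cot\tau$, \Cref{eq:theta_matrix_definition} together with the oddness of $\cot$ gives
\[
    4h\,\Theta=\begin{bmatrix}\cot(\sigma+\tau) & s-t\\ s+t & \cot(\sigma-\tau)\end{bmatrix}.
\]
Using the addition formulas $\cot(\sigma+\tau)=\tfrac{st-1}{s+t}$ and $\cot(\sigma-\tau)=\tfrac{st+1}{t-s}$, the diagonal product collapses to $(s^2t^2-1)/(t^2-s^2)$, whence
\[
    (4h)^2\det\Theta=\frac{s^2t^2-1}{t^2-s^2}-(s^2-t^2)=\frac{s^4-s^2t^2+t^4-1}{t^2-s^2}.
\]
Since $s^2=t^2$ only on the boundary (where $\theta_2=0$ or $\theta_1=1$), in the interior $\det\Theta=0$ is equivalent to $s^4-s^2t^2+t^4=1$, i.e., with $u\coloneqq s^2$ and $v\coloneqq t^2$, to the ellipse equation $u^2-uv+v^2=1$.

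For part (i) I would use the identity $u^2-uv+v^2-1=(u-v)^2+(uv-1)$. If $\theta_1\le 1/2$ then $\sigma+\tau\le\pi/2$ with $0<\tau<\pi/4$, which forces $\sigma\le\pi/2-\tau$ and hence $s\ge\tan\tau=1/t$, i.e. $uv=s^2t^2\ge 1$; moreover $\sigma>\tau$ gives $0<s<t$, so $t^2-s^2>0$ and $u\ne v$. Thus the numerator $(u-v)^2+(uv-1)$ is strictly positive and the denominator positive, whence $\det\Theta>0$: the matrix is invertible for every $\theta_2\in(0,\theta_1)$.

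For part (ii) I would solve $v^2-uv+(u^2-1)=0$, giving $v=\tfrac12\bigl(u\pm\sqrt{4-3u^2}\bigr)$; on $(\pi/4,3\pi/4)$ one has $|\cot\sigma|\le 1$, so the discriminant is nonnegative and the ``$+$'' root is precisely $\cot^2(f(\sigma))$, reproducing the parametrization $(\theta_1,\theta_2)=\bigl((\sigma+f(\sigma))/\pi,(\sigma-f(\sigma))/\pi\bigr)$. The key simplification is that an \emph{admissible} zero of $\det\Theta$ has $v=\cot^2\tau\ge 1$ (equivalently $\tau\le\pi/4$): any ellipse point with $v<1$ necessarily has $u>1$, so $\sigma\notin[\pi/4,3\pi/4]$ and $\tau>\pi/4$, violating either $\tau<\sigma$ or $\sigma+\tau<\pi$; conversely $v\ge 1$ forces $u\le 1$, $\sigma\in[\pi/4,3\pi/4]$, and the ``$+$'' root. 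This simultaneously discards the ``$-$'' root and confines the parameter to $(\pi/4,3\pi/4)$. On this branch $\theta_2<1/2$ follows without any monotonicity: from $uv=1-(u-v)^2\le 1$ (strict off the endpoints) one gets $|st|=\sqrt{uv}<1$, so $st+1>0$; since also $t-s>0$ there, $\cot(\pi\theta_2)=\cot(\sigma-\tau)=(st+1)/(t-s)>0$, i.e. $\theta_2<1/2$. Existence of a vanishing $\theta_2$ for each fixed $\theta_1\in(1/2,1)$ is then the intermediate value theorem applied to $N(\theta_2)\coloneqq s^4-s^2t^2+t^4-1$ (which differs from $\det\Theta$ only by the nonvanishing factor $t^2-s^2$): $N\to\cot^4(\pi\theta_1/2)-1<0$ as $\theta_2\to 0$ and $N\to+\infty$ as $\theta_2\to\theta_1$, where $t=\cot\tau\to+\infty$.

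The crux, and the step I expect to be the main obstacle, is uniqueness. Writing $\theta_2=2\psi/\pi$ with $\sigma=\pi\theta_1/2+\psi$ and $\tau=\pi\theta_1/2-\psi$, a direct differentiation (using $\csc^2=1+\cot^2$) gives
\[
    \tfrac12\,\frac{dN}{d\psi}=t(2v-u)(1+v)-s(2u-v)(1+u).
\]
At any zero of $N$ the point $(u,v)$ lies on the admissible branch, where $0\le u\le 1\le v$; hence $t=\sqrt v\ge\sqrt u=|s|$, $2v-u\ge|2u-v|$ with $2v-u>0$, and $1+v\ge 1+u$, so the first term dominates the modulus of the second and $dN/d\psi>0$ there (strictly, since $u\ne v$ at an interior zero). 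Thus every zero of $N$ is a strict upcrossing, which together with the sign change of $N$ forces \emph{exactly one} zero. This single factor-by-factor sign comparison is the only genuinely delicate point — it is what prevents a crude bound, since $1+f'(\sigma)\to 0$ at the endpoints — while everything else is bookkeeping with the ellipse relation $uv=1-(u-v)^2$. Combined with the range statements $\theta_1\in(1/2,1)$ and $\theta_2\in(0,1/2)$ established above, this identifies the vanishing locus with the stated curve and completes the proof.
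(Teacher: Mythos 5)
Your proof is correct, and its computational core coincides with the paper's: the same half-sum/half-difference substitution (your $(\sigma,\tau)$ is the paper's $(\phi_1,\phi_2)$), the same reduction of $\det(\Theta)=0$ to the quadratic $x^2-ax+a^2-1=0$ with $a=\cot^2\sigma$, $x=\cot^2\tau$, and an intermediate value argument for existence. The genuine differences are exactly at the two places where the paper is terse. For uniqueness, the paper works in the original variables with $16h^2\det(\Theta)=\cot(\pi\theta_1)\cot(\pi\theta_2)+4\sin(\pi\theta_1)\sin(\pi\theta_2)/(\cos(\pi\theta_2)-\cos(\pi\theta_1))^2$ and asserts, without proof, that for fixed $\theta_1\in(1/2,1)$ this is strictly increasing on all of $(0,\theta_1)$; the assertion is true (upon differentiation the key signs are $\cot(\pi\theta_1)<0$ and $2-\cos(\pi\theta_2)\bigl(\cos(\pi\theta_2)+\cos(\pi\theta_1)\bigr)>0$), but the verification is omitted. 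You replace global monotonicity by a local transversality statement --- every zero of the numerator $N=u^2-uv+v^2-1$ is a strict upcrossing --- proved via an explicit derivative identity together with the confinement $0\le u\le1\le v$ of admissible zeros. This buys a self-contained argument in which the sign analysis is needed only on the zero set, where the ellipse relation is available; the paper's route is shorter once the omitted differentiation is supplied, but requires monotonicity everywhere. Likewise, the branch selection that the paper compresses into ``some care has to be taken to ensure that one picks the right branches'' is carried out explicitly by you (admissible zeros force $v\ge1$, hence $u\le1$, hence $\sigma\in(\pi/4,3\pi/4)$ and the ``$+$'' root), and you obtain $\hat{\theta}_2<1/2$ from $\cot(\pi\theta_2)=(st+1)/(t-s)>0$ on the branch, where the paper instead combines its monotonicity claim with positivity of the determinant at $\theta_2=1/2$. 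Your part (i), via $N=(u-v)^2+(uv-1)$ and $uv\ge1$, is a mild variant of the paper's observation that $\cot(\pi\theta_1)\cot(\pi\theta_2)\ge0$ when $\theta_1\le1/2$.

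One loose end you should close: your argument yields that the graph of $\hat{\theta}_2$ is \emph{contained in} the parametrized curve (each admissible zero lies on the branch, and each $\theta_1\in(1/2,1)$ carries exactly one zero), but the proposition asserts that the two sets coincide, so you must also check the converse, namely that every parameter value $\sigma\in(\pi/4,3\pi/4)$ produces an \emph{admissible} pair, i.e. $f(\sigma)<\sigma$ and $\sigma+f(\sigma)<\pi$. This follows in one line from your own formulas --- $\cot^2 f(\sigma)=\tfrac12\bigl(u+\sqrt{4-3u^2}\bigr)\ge1$ for $u\in[0,1]$, so $f(\sigma)\le\pi/4<\min(\sigma,\pi-\sigma)$ --- but as written the inclusion of the curve in the graph is not addressed.
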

    \begin{proof}
        It is useful to write the determinant of $\Theta$ as
        \[
            \det(\Theta) = \frac{1}{16h^2}\left[\cot(\pi\theta_1)\cot(\pi\theta_2)+\frac{4\sin(\pi\theta_1)\sin(\pi\theta_2)}{(\cos(\pi\theta_2)-\cos(\pi\theta_1))^2}\right].
        \]
        One immediately observes that the second term inside the parentheses is always strictly positive. If $\theta_1 \leq 1/2$, then one has in addition that the first term is nonnegative for any $\theta_2 \in (0,\theta_1) \subseteq (0,1/2)$. This proves the first part of the proposition.
        
        For the second part, let us first prove that there is exactly one value of $\theta_2$ for each $\theta_1 \in (1/2,1)$ that makes $\Theta$ singular, and that this value lies in the interval $(0,1/2)$. For fixed $\theta_1 \in (1/2,1)$ the determinant is strictly increasing in $\theta_2$, and tends to $-\infty$ as $\theta_2 \downarrow 0$, and to $\infty$ as $\theta_2 \uparrow \theta_1$. Hence it vanishes at exactly one value of $\theta_2$, say $\hat{\theta_2}(\theta_1)$. Because the determinant is positive when $\theta_2 = 1/2$, this value must necessarily lie in the interval $(0,1/2)$.
        
        We now move to the parametrization of the graph of the map $\map{\hat{\theta}_2}{(1/2,1)}{(0,1/2)}$. One may note from \Cref{fig:two_point_vortices_positions} that there is symmetry across the diagonal line
        \[
            \{(\theta_1,\theta_2) \in (0,1)^2 : \theta_1 > \theta_2, \theta_1 + \theta_2 = 1\},
        \]
        which suggests making a change of variables. By letting
        \begin{equation}
            \label{eq:two_vortices_determinant_change_of_variables}
            \phi_1 \coloneqq \pi\frac{\theta_1 + \theta_2}{2},\quad \phi_2 \coloneqq \pi\frac{\theta_1-\theta_2}{2},
        \end{equation}
        we can write the determinant in the form
        \[
            \det(\Theta) = \frac{1}{16h^2}\left[\frac{\cot(\phi_1)^2\cot(\phi_2)^2-1}{\cot(\phi_2)^2-\cot(\phi_1)^2}+\cot(\phi_2)^2-\cot(\phi_1)^2\right],
        \]
        which leads us to solve the quadratic equation
        \[
            x^2-ax+a^2-1=0,\quad a\coloneqq \cot^2(\phi_1), \quad x\coloneqq \cot^2(\phi_2)
        \]
        for $x$, given $a$. Doing this yields the parametrization, by using $\phi_1$ as the parameter (some care has to be taken to ensure that one picks the right branches of the functions involved) and going back to the original variables by inverting \Cref{eq:two_vortices_determinant_change_of_variables}.
    \end{proof}
    
    \begin{remark}
        \label{remark:existence_when_det_theta_vanishes}
        By employing the parametrization of the graph of $\hat{\theta}_2$ provided by \Cref{prop:two_point_vortices}, one can show that the each column of $\Theta$ is linearly independent from $\mathbf{1}$ when $\det(\Theta)=0$. This implies that an argument similar to that of \Cref{thm:existence_several_point_vortices} can be performed, by using the vortex strength $\gamma^1$ as the bifurcation parameter, instead of $c$. Thus it is possible to show existence for any configuration when $n=2$. An extension of this argument to $n > 2$ is harder, because it requires the rank of $\Theta$ to be $n-1$.
    \end{remark}
    
    While the set of configurations that make $\det(\Theta)$ vanish is hard to describe in general when $n > 2$, some observations can be made. Of course, if $n \geq 2$, and as long as the derivative of $\det(\Theta)$ with respect to the variable $(\theta_1,\ldots,\theta_n)$ does not vanish at a point where $\det(\Theta) = 0$, the zero set of $\det(\Theta)$ is locally a smooth manifold of dimension $n-1$ around that point by the implicit function theorem. When $n=2$, the zero set is actually the graph of a smooth function in $\theta_1$ by \Cref{prop:two_point_vortices}, and numerical evidence suggests that the zero set is the graph of a smooth function in $(\theta_1,\theta_2)$ when $n=3$. Actually checking that the derivative does not vanish is hard, but we have the following theorem:
    
    \begin{theorem}
        \label{thm:point_vortex_configurations_measure_zero}
        The subset of configurations of point vortices in
        \[\{(\theta_1,\ldots,\theta_n) \in (0,1)^n : 1 > \theta_1 > \theta_2 > \cdots > \theta_n > 0\}
        \]
        such that $\Theta$ is not invertible has measure zero.
    \end{theorem}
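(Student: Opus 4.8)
The plan is to exploit the fact that $\det(\Theta)$ is a real-analytic function of $(\theta_1,\ldots,\theta_n)$ on the open simplex
\[
    S \coloneqq \{(\theta_1,\ldots,\theta_n) \in (0,1)^n : 1 > \theta_1 > \cdots > \theta_n > 0\},
\]
and then to invoke the classical result that the zero set of a real-analytic function which does not vanish identically on a connected open subset of $\R^n$ has Lebesgue measure zero. The set in the statement is exactly $\{\det(\Theta)=0\}$, so the theorem follows once these two ingredients are in place.

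First I would verify real-analyticity. Each entry of $\Theta$ in \Cref{eq:theta_matrix_definition} is built from $\cot$ evaluated at $\pi\theta_i$, at $\pi(\theta_i+\theta_j)/2$, and at $\pi(\theta_i-\theta_j)/2$. On $S$ one has $\pi\theta_i \in (0,\pi)$, $\pi(\theta_i+\theta_j)/2 \in (0,\pi)$, and, since $\theta_i > \theta_j$ for $i<j$, also $\pi(\theta_i-\theta_j)/2 \in (0,\pi/2)$; in particular all of these arguments stay strictly away from the poles $\pi\Z$ of $\cot$. As $\cot$ is real-analytic off its poles, every entry of $\Theta$ is real-analytic on $S$, and hence so is the polynomial expression $\det(\Theta)$. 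Since $S$ is convex, it is connected.

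The heart of the matter is to show that $\det(\Theta)$ is not identically zero on $S$, which I would establish by induction on $n$. For $n=1$ one has $\det(\Theta) = \frac{1}{4h}\cot(\pi\theta_1)$, which is nonzero as soon as $\theta_1 \neq 1/2$. For the inductive step, fix a configuration $\theta_1 > \cdots > \theta_{n-1}$ for which the corresponding $(n-1)\times(n-1)$ matrix $\Theta^{(n-1)}$ is nonsingular; this matrix is precisely the top-left block of $\Theta$, and its entries do not depend on $\theta_n$. Now let $\theta_n \downarrow 0$. In this limit $\Theta_{nn} = \frac{1}{4h}\cot(\pi\theta_n) \to +\infty$, while the remaining entries of the last column satisfy $\Theta_{jn} \to 0$ for $j < n$ and all other entries (including those of the last row, which tend to $\frac{1}{2h}\cot(\pi\theta_k/2)$) stay bounded. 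Expanding the determinant along the final column yields $\det(\Theta) = \Theta_{nn}\det(\Theta^{(n-1)}) + o(1)$, so $\det(\Theta) \to \pm\infty$ and is in particular nonzero for $\theta_n$ sufficiently small. This exhibits a point of $S$ at which $\det(\Theta) \neq 0$.

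The step I expect to be the main obstacle is precisely this non-vanishing claim: the measure-zero conclusion is automatic once the analytic function is known to be nontrivial, but ruling out the degenerate possibility that $\Theta$ is singular throughout $S$ requires genuinely producing a nonsingular configuration. The degenerate-limit argument above is clean because sending the lowest vortex to the bottom makes $\Theta$ diagonally dominant in its last row and column; the only care needed is to confirm that the off-diagonal entries of the last column really vanish in the limit while the relevant cofactors stay bounded, both of which follow from the explicit form of the entries in \Cref{eq:theta_matrix_definition}. With non-triviality secured, one applies the real-analytic zero-set theorem — provable by noting that the restriction of $\det(\Theta)$ to almost every line is a nontrivial one-variable analytic function, hence has isolated zeros, and then integrating via Fubini — to conclude that $\{\det(\Theta)=0\}$ has measure zero.
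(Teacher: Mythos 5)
Your proposal is correct, but the key non-degeneracy step is carried out in a genuinely different way from the paper. Both arguments reduce the theorem to two ingredients: $\det(\Theta)\not\equiv 0$ on the open simplex, and the classical fact that the zero set of a nontrivial analytic function on a connected open set is Lebesgue-null. The difference is in how non-vanishing is exhibited. You induct on $n$ and send only the lowest vortex to the bottom: then $\Theta_{nn}=\tfrac{1}{4h}\cot(\pi\theta_n)\to+\infty$, the rest of the last column tends to $0$, the relevant cofactors stay bounded, and the cofactor expansion gives $\det(\Theta)=\Theta_{nn}\det(\Theta^{(n-1)})+o(1)\to\pm\infty$, which is nonzero for small $\theta_n$ by the inductive hypothesis. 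The paper instead scales the \emph{entire} configuration, $\theta_i=\varepsilon\tilde\theta_i$ with $\varepsilon\downarrow 0$, and shows that $\diag(\tan(\pi\theta_k))_{k=1}^n\,\Theta$ converges to $\tfrac{1}{4h}(I_{\R^n}-B)$ with $B$ skew-symmetric, hence invertible, so $\Theta$ is invertible for small $\varepsilon$. Your route is more elementary (a cofactor expansion and a one-dimensional base case, with no spectral input), and you also establish \emph{joint} real-analyticity of $\det(\Theta)$ on the simplex, whereas the paper invokes only analyticity in each variable separately together with a dimension-induction footnote for the null-set conclusion; your observation that all arguments of $\cot$ avoid the poles makes the joint statement clean and correct. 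What the paper's scaling buys, and your limit does not, is sign information: the skew-symmetric structure yields $\det(\Theta)\prod_{k=1}^n\tan(\pi\theta_k)>0$ for small $\varepsilon$, and precisely this positivity is reused in \Cref{prop:always_bad_configurations} to produce configurations with $\det(\Theta)<0$ and hence, by continuity, singular configurations. So your argument fully suffices for the theorem as stated, but it would not feed into that later result.
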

    \begin{proof}
        Each entry in $\Theta$ is analytic in each $\theta_i$ for $\theta_1,\ldots,\theta_{i-1},\theta_{i+1},\ldots,\theta_n$ fixed. It follows that $\det(\Theta)$ also has this property, when viewed as a function
        \[
            U \coloneqq \{(\theta_1,\ldots,\theta_n) \in (0,1)^n : 1 > \theta_1 > \theta_2 > \cdots > \theta_n > 0\} \to \R.
        \]

        We first verify that $\det(\Theta)$ does not vanish identically on $U$. To that end, fix $1/2 > \tilde{\theta}_1 > \tilde{\theta}_2 > \cdots > \tilde{\theta}_n > 0$ and consider $\theta_1 = \varepsilon \tilde{\theta}_1,\ldots \theta_n = \varepsilon \tilde{\theta}_n$ for $1 > \varepsilon > 0$. The purpose of the upper bound of $1/2$ is to make sure that $\tan(\pi\theta_i)$ is well defined for all $1 \leq i \leq n$. Observe now that if we let $T\coloneqq\diag(\tan(\pi \theta_k))_{k=1}^n$, then
        \[
            4h[T\Theta]_{i,j} = \begin{cases}
                1 & i = j\\
                \tan(\pi\theta_i)\left(\cot\left(\pi\frac{\theta_i + \theta_j}{2}\right) - \cot\left(\pi\frac{\theta_i - \theta_j}{2}\right) \right) & i \neq j
            \end{cases},
        \]
        where
        \[
            \lim_{\varepsilon \downarrow 0} \tan(\varepsilon\pi\tilde{\theta}_i)\left(\cot\left(\varepsilon\pi\frac{\tilde{\theta}_i + \tilde{\theta}_j}{2}\right) - \cot\left(\varepsilon\pi\frac{\tilde{\theta}_i - \tilde{\theta}_j}{2}\right) \right)= -\frac{4\tilde{\theta}_i \tilde{\theta}_j}{\tilde{\theta}_i^2 - \tilde{\theta}_j^2}
        \]
        for $i \neq j$. It follows that $\diag(\tan(\pi \theta_k))_{k=1}^n\Theta$ has a limit in $B(\R^n)$ as $\varepsilon \downarrow 0$, and that this limit is
        \begin{equation}
            \label{eq:diag_theta_limit_pos}
            \lim_{\varepsilon \downarrow 0} \diag(\tan(\pi \theta_k))_{k=1}^n\Theta = \frac{1}{4h}(I_{\R^n} - B),
        \end{equation}
        where we have defined $B \in \R^{n \times n}$ by
        \begin{equation}
            \label{eq:def_of_b_matrix}
            B_{i,j} \coloneqq \begin{cases}
                0 & i = j\\
                4\tilde{\theta}_i \tilde{\theta}_j(\tilde{\theta}_i^2 - \tilde{\theta}_j^2)^{-1} & i \neq j
            \end{cases}.
        \end{equation}
        In particular, $B$ is skew-symmetric, which implies that $I_{\R^n}-B$ is invertible. Since the set of invertible operators is open, so is the matrix $\diag(\tan(\pi \theta_k))_{k=1}^n\Theta$ for sufficiently small $\varepsilon$, which in turn means that $\Theta$ is invertible for such $\varepsilon$.
        
        Finally, the set $U$ is connected. Hence, since we know that $\det(\Theta)$ is analytic in each variable and does not vanish identically, we infer\footnote{This follows by induction on the dimension, by using the well known result in one dimension.} that the subset of $U$ on which $\det(\Theta)$ vanishes has measure zero.
    \end{proof}
    
    In general we cannot do better than \Cref{thm:point_vortex_configurations_measure_zero}, in the sense that for any $n \geq 1$ there will always be a configuration of $n$ point vortices that makes $\det(\Theta)$ vanish.
    
    \begin{proposition}
        \label{prop:always_bad_configurations}
        There are always configurations of point vortices in
        \[
            \{(\theta_1,\ldots,\theta_n) \in (0,1)^n : 1 > \theta_1 > \theta_2 > \cdots > \theta_n > 0\}
        \]
        where $\Theta$ is singular.
    \end{proposition}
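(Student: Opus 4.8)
The plan is to exhibit two configurations in
\[
U = \{(\theta_1,\ldots,\theta_n) \in (0,1)^n : 1 > \theta_1 > \cdots > \theta_n > 0\}
\]
at which $\det(\Theta)$ takes opposite signs, and then to invoke the intermediate value theorem. This works because $U$ is convex, hence connected, and because $\det(\Theta)$ is continuous (indeed analytic) on $U$: every cotangent in \Cref{eq:theta_matrix_definition} has argument $\pi\theta_i$, $\pi\tfrac{\theta_i+\theta_j}{2}$ or $\pi\tfrac{\theta_i-\theta_j}{2}$ lying strictly inside $(0,\pi)$, away from the poles of $\cot$. Thus the whole task reduces to producing a sign change of $\det(\Theta)$ on $U$, whereupon a path joining the two configurations must cross a zero.

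A configuration with $\det(\Theta) > 0$ is already furnished by the proof of \Cref{thm:point_vortex_configurations_measure_zero}. Taking $\theta_i = \varepsilon\tilde\theta_i$ with $1/2 > \tilde\theta_1 > \cdots > \tilde\theta_n > 0$ fixed and $\varepsilon$ small, the scaling in \Cref{eq:diag_theta_limit_pos} gives $\diag(\tan(\pi\theta_k))_{k=1}^n\,\Theta \to \tfrac{1}{4h}(I_{\R^n} - B)$ with $B$ the skew-symmetric matrix of \Cref{eq:def_of_b_matrix}, so $\det(I_{\R^n} - B) > 0$; since $\prod_k \tan(\pi\theta_k) > 0$ for $\theta_k \in (0,1/2)$, we conclude $\det(\Theta) > 0$ for $\varepsilon$ small.

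For the opposite sign I would move a single vortex to the surface while keeping the remaining $n-1$ near the bottom, well separated: set $\theta_1 = 1 - \varepsilon\mu$ and $\theta_i = \varepsilon\tilde\theta_i$ for $2 \le i \le n$, with $\mu > 0$ fixed and $\varepsilon$ small. Running the same diagonal scaling $\diag(\tan(\pi\theta_k))_{k=1}^n$, the decisive point is that the cross entries linking the surface vortex to the bottom cluster involve $\cot(\pi\tfrac{\theta_1 \pm \theta_i}{2})$, whose arguments tend to $\pi/2$; these entries are therefore $O(\varepsilon)$, and remain so after multiplication by the $O(\varepsilon)$ tangent factors, so the off-diagonal blocks vanish in the limit. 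The diagonal entries normalize to $\tfrac{1}{4h}$ as before, and the within-cluster off-diagonal entries again produce a skew-symmetric correction, so the scaled matrix converges to the block-diagonal matrix $\tfrac{1}{4h}\diag(1, I_{\R^{n-1}} - B')$ with $B'$ skew-symmetric, whose determinant is strictly positive. The only change from the previous case is the scaling sign: now $\tan(\pi\theta_1) = -\tan(\pi\varepsilon\mu) < 0$ while the other $n-1$ tangents are positive, so $\prod_k \tan(\pi\theta_k) < 0$ and hence $\det(\Theta) < 0$ for $\varepsilon$ small.

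The main obstacle is precisely this limit computation for the mixed configuration—verifying that the cross block between the near-surface vortex and the near-bottom cluster genuinely vanishes and that the surviving blocks have positive determinant—but it is a direct repetition of the estimates already carried out for \Cref{thm:point_vortex_configurations_measure_zero}, together with the elementary fact that $\det(I - B') > 0$ for any real skew-symmetric $B'$, its nonzero eigenvalues occurring in conjugate pairs $\pm i\lambda$. With the two sign-opposite configurations in hand, connectedness of $U$ and the intermediate value theorem complete the proof. (For odd $n$ one can bypass the mixed configuration altogether: reflecting a near-bottom configuration across $y = -h/2$ as in the remark following \Cref{thm:existence_several_point_vortices} multiplies $\det(\Theta)$ by $(-1)^n$, already reversing the sign.)
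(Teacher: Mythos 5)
Your proposal is correct and takes essentially the same route as the paper's own proof: both exhibit the positive-sign configuration $\theta_i = \varepsilon\tilde\theta_i$ and the negative-sign configuration $\theta_1 = 1-\varepsilon\mu$, $\theta_i = \varepsilon\tilde\theta_i$ for $i \geq 2$, via the same diagonal rescaling by $\diag(\tan(\pi\theta_k))_{k=1}^n$ and the same skew-symmetry argument for the limiting matrices, concluding by continuity of $\det(\Theta)$ on the connected set $U$ (a step the paper leaves implicit). Your parenthetical reflection trick for odd $n$ is a correct bonus observation, but the core argument coincides with the paper's.
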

    \begin{proof}
        The matrix appearing on the right-hand side of \Cref{eq:diag_theta_limit_pos} in the proof of \Cref{thm:point_vortex_configurations_measure_zero} has a positive determinant. Indeed, the matrix $B$ defined in \Cref{eq:def_of_b_matrix} is skew-symmetric, so its spectrum is purely imaginary. Moreover, since the matrix is real, the eigenvalues are either zero or appear in complex conjugate pairs.
        
        Say that the first $m$ eigenvalues $\lambda_1,\ldots,\lambda_m$ of $B$ are zero and that
        \[
            \lambda_{m+2j-1} = \overline{\lambda_{m+2j}}=i\mu_j,\quad j=1,\ldots,(n-m)/2,
        \]
        where the $\mu_j$ are real. Then it follows that
        \begin{align*}
            \det\left(\frac{1}{4h}(I_{\R^n}-B)\right) &= \frac{1}{(4h)^n} \det(I_{\R^n}-B)\\
            &=\frac{1}{(4h)^n}(1+\mu_1^2)(1+\mu_2^2)\cdots(1+\mu_{(n-m)/2}^2),
        \end{align*}
        because the determinant of a matrix is equal to the product of its eigenvalues (taking algebraic multiplicity into account). By \Cref{eq:diag_theta_limit_pos} we then have
        \begin{equation}
            \label{eq:det_product_positive}
            \det(\Theta) \prod_{k=1}^n \tan(\pi\theta_k) > 0
        \end{equation}
        for small $\varepsilon > 0$ (as in the proof of \Cref{thm:point_vortex_configurations_measure_zero}) by continuity of the determinant. Since all the tangents are also positive, this implies that $\det(\Theta) > 0$ for small $\varepsilon > 0$.
        
        It remains to exhibit a configuration where $\det(\Theta) < 0$. To that end, fix $\frac{1}{2} > \tilde{\theta}_1 > \tilde{\theta}_2 > \tilde{\theta}_3 > \cdots > \tilde{\theta}_n > 0$ and consider $\theta_1 = 1- \varepsilon \tilde{\theta}_1, \theta_2 = \varepsilon \tilde{\theta}_2,\ldots \theta_n = \varepsilon \tilde{\theta}_n$ for $1 > \varepsilon > 0$. Proceeding as in the proof of \Cref{thm:point_vortex_configurations_measure_zero} we find
        \begin{equation}
            \label{eq:diag_theta_limit_neg}
            \lim_{\varepsilon \downarrow 0} \diag(\tan(\pi \theta_k))_{k=1}^n\Theta = \frac{1}{4h}(I_{\R^n} - \tilde{B}),
        \end{equation}
        where we have defined $\tilde{B} \in \R^{n \times n}$ by
        \[
            \tilde{B}_{i,j} \coloneqq \begin{cases}
                0 & \text{$i=j$ or $i=1$ or $j=1$},\\
                4\tilde{\theta}_i \tilde{\theta}_j(\tilde{\theta}_i^2 - \tilde{\theta}_j^2)^{-1} & \text{otherwise}.
            \end{cases}
        \]
        This matrix is still skew-symmetric like $B$, and so the right-hand side of \Cref{eq:diag_theta_limit_neg} has a positive determinant, as before. Hence \Cref{eq:det_product_positive} holds for small $\varepsilon$. However, now $\tan(\pi\theta_1)$ is negative and the rest of the tangents are positive, meaning that $\det(\Theta)$ must be negative.
    \end{proof}
\section{Explicit expressions for infinite depth}
    \label{section:infinite_depth}
    In this section we give some explicit expressions for periodic waves with a point vortex on infinite depth, constructed in \cite{Shatah2013}. We will adopt the notation and conventions used there. The fluid domain for the trivial surface is $\R \times (-\infty,1)$ and the waves have period $2\pi L$. The stream function for the rotational part is denoted by $\mathbf{G}$.
    
    \begin{proposition}[Stream function]
        \label{prop:stream_function_periodic}
        The stream function for the rotational part is given by
        \[
            \mathbf{G}(x,y) = \frac{1}{4\pi} \log\left(\frac{\cos(x/L)-\cosh(y/L)}{\cos(x/L)-\cosh((y-2)/L)}\right).
        \]
    \end{proposition}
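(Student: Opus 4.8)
The plan is to follow the strategy of \Cref{prop:phi_properties} almost verbatim, with the finite strip replaced by the periodic half-plane. That is, I would produce a conformal map $f$ from the fluid domain $\R \times (-\infty,1)$, with its $2\pi L$-periodicity in $x$, onto the unit disk $\mathbb{D}$ that sends the point vortex at $(0,0)$ to the origin, and then invoke \Cref{thm:greens_functions} to conclude that $\frac{1}{2\pi}\log\abs{f}$ solves the point-vortex Laplace problem with vanishing trace on the surface $y=1$. The map I would assemble in two steps, exactly as in the three-step diagram used for $\Phi$. First, the exponential $z \mapsto e^{-iz/L}$ carries one period of $\{y<1\}$ conformally onto the punctured disk $\{0 < \abs{w} < e^{1/L}\}$, sending $y \to -\infty$ to the center $w=0$, the vortex $z=0$ to $w=1$, and the ``mirror'' point $z=2i$ to $w=e^{2/L}$, which lies outside the disk. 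Second, I would compose with the disk automorphism
\[
    \phi(w) \coloneqq \frac{e^{1/L}(w-1)}{e^{2/L}-w},
\]
which maps $\{\abs{w}<e^{1/L}\}$ onto $\mathbb{D}$ and sends $w=1$ to $0$; note that its pole sits precisely at the image $e^{2/L}$ of the mirror vortex, so that $f \coloneqq \phi \circ e^{-iz/L}$ has a pole there, as befits the method of images.

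It then remains to check that $\frac{1}{2\pi}\log\abs{f}$ agrees with the claimed formula, which is a direct computation resting on the elementary identity $\abs{e^u - e^{iv}}^2 = 2e^u(\cosh u - \cos v)$. Writing $e^{-iz/L} = e^{y/L}(\cos(x/L) - i\sin(x/L))$ and applying this to $\abs{e^{-iz/L}-1}^2$ and $\abs{e^{2/L}-e^{-iz/L}}^2$, the exponential prefactors cancel and one obtains
\[
    \abs{f(z)}^2 = \frac{\cosh(y/L) - \cos(x/L)}{\cosh((y-2)/L) - \cos(x/L)} = \frac{\cos(x/L) - \cosh(y/L)}{\cos(x/L) - \cosh((y-2)/L)},
\]
so that $\frac{1}{2\pi}\log\abs{f} = \frac{1}{4\pi}\log\abs{f}^2$ is exactly $\mathbf{G}$. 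The same computation shows $\abs{f}<1$ throughout $\{y<1\}$ (since $\abs{y}<\abs{y-2}$ there) and $\abs{f}=1$ on $y=1$, which is the homogeneous boundary condition at the surface; decay of $\nabla^\perp \mathbf{G}$ as $y \to -\infty$ is then immediate from the formula.

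The only point requiring genuine care, as opposed to the simply connected situation of \Cref{prop:phi_properties}, is the periodicity. Because $\{y<1\}$ with period $2\pi L$ is a half-cylinder rather than a disk, I would apply \Cref{thm:greens_functions} on a single period cell $\{0 \le x < 2\pi L,\ y<1\}$, on which $e^{-iz/L}$ is a genuine bijection onto the punctured disk, and then extend $\mathbf{G}$ periodically. One must observe that $f$, viewed on the whole half-plane, has simple zeros at every translate $(2\pi k L, 0)$, $k \in \Z$, of the vortex, so that $\Delta \mathbf{G}$ is the corresponding periodic array of point vortices while reducing to $\delta_{(0,0)}$ near the origin, and that the poles at $(2\pi kL, 2)$ all lie above the surface and so never enter $\Omega(\eta)$. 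This bookkeeping is the only place the argument departs from the finite-depth proof, and it is routine once the covering picture is made explicit.
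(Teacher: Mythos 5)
Your construction is correct, but it takes a genuinely different route from the paper. The paper never works on the full periodic half-plane: it restricts to a single period cell $(-\pi L,\pi L)\times(-\infty,1)$, observes that by symmetry $\mathbf{G}_x$ must vanish on the cell walls $x=\pm\pi L$, and thereby converts periodicity into a mixed Dirichlet/Neumann problem, which it solves via \Cref{thm:greens_functions_mixed} using an explicit conformal map (built from $\tanh$) onto a \emph{slit} unit disk, the slit accounting for the Neumann walls. You instead absorb the periodicity into the covering map: $f=\phi\circ e^{-iz/L}$ is a single-valued holomorphic function on all of $\{y<1\}$, with simple zeros precisely at the vortex lattice $2\pi L\Z\times\{0\}$, poles only at the mirror lattice $2\pi L\Z\times\{2\}$ above the surface, and $\abs{f}=1$ exactly on $y=1$; the local expansion at a simple zero (the same argument as in the proof of \Cref{thm:greens_functions}, via \Cref{prop:delta_vorticity}) gives $\Delta\mathbf{G}=\sum_k\delta_{(2\pi kL,0)}$, and your identity $\abs{e^u-e^{iv}}^2=2e^u(\cosh u-\cos v)$ yields the stated formula; I have checked this computation and it is right. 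This is more elementary and self-contained than the paper's argument, needing neither the mixed-boundary Green's function theorem nor slit domains, while the paper's route buys consistency with its appendix machinery and runs parallel to the finite-depth proof of \Cref{prop:phi_properties}. One caveat: your stated appeal to \Cref{thm:greens_functions} ``on a single period cell'' is not literally licensed --- the half-open cell is not a domain, and on the open cell $f$ maps onto a punctured, slit disk rather than onto $\mathbb{D}$, so the hypotheses of that theorem fail. This does not damage the proof, because what you actually use is only the local structure of $\tfrac{1}{2\pi}\log\abs{f}$ near each simple zero together with the directly verified boundary condition at $y=1$ and the decay as $y\to-\infty$, all of which you establish on the whole half-plane without invoking global bijectivity; but the final write-up should say so rather than cite \Cref{thm:greens_functions}.
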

    \begin{proof}
        We wish to find the stream function $\map{\mathbf{G}}{\R \times (-\infty,1)}{\R}$ corresponding to equally spaced point vortices of unit strength at the points $2\pi L\Z \times \{0\} \subseteq \R^2$, and which is such that this stream function vanishes at the surface, $\R \times \{1\}$. By symmetry, it must be the case that $\mathbf{G}_x$ vanishes on $\pi L(1 + 2\Z) \times (-\infty,1)$. This leads us to the boundary value problem
        \[
            \Delta \mathbf{G} = \delta, \quad \restr{\mathbf{G}}{y=0} = 0, \quad \restr{\mathbf{G}_x}{x=\pm \pi L} = 0,
        \]
        on $(-\pi L,\pi L) \times (-\infty,1)$. This equation can be dealt with using \Cref{thm:greens_functions_mixed} in \Cref{appendix}.
        
        In order to apply \Cref{thm:greens_functions_mixed} we require a conformal map satisfying the requirements in the theorem statement. One may check (see \cite[Sections 7.1 and 7.2]{Varholm2014}) that
        \begin{equation}
            \label{eq:infinite_depth_conformal_map}
            f(z) \coloneqq \frac{\tanh(1/(2L)) -\tanh((1+iz)/(2L))}{\tanh(1/(2L))+\tanh((1+iz)/(2L))}
        \end{equation}
        defines a bijective conformal map from the half strip $(-\pi L, \pi L) \times (-\infty,1)$ onto the slit unit disk $\mathbb{D} \setminus ((0,\exp(-1/L)) \times \{0\})$, and which is such that
        \begin{enumerate}[(i)]
            \item The origin is fixed.
            \item The surface is mapped to the unit circle.
            \item The sides $\{\pm\pi L\} \times (-\infty,1)$ are mapped to the slit.
        \end{enumerate}
        The result now follows by taking the logarithm of the modulus of the map $f$ in \Cref{eq:infinite_depth_conformal_map}.
    \end{proof}
    
    By using \Cref{prop:stream_function_periodic}, we can obtain an explicit expression for the leading order wave velocity $c_1$, and a Fourier series for the leading order surface profile $\eta_*$:
    
    \begin{proposition}[$c_1$ and $\eta_*$]
        The leading-order wave velocity $c_1$ and surface profile $\eta_*$ are given by
        \begin{align*}
            c_1 &= -\frac{1}{4\pi L}\coth(1/L),\\
            \eta_* &= - \frac{1}{4\pi^2} \sum_{n=1}^\infty \frac{n}{gL^2 + \alpha^2 n^2} e^{-n/L}\cos(nx/L),
        \end{align*}
        respectively.
    \end{proposition}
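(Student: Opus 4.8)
The plan is to read off both constants directly from the explicit formula for $\mathbf{G}$ in \Cref{prop:stream_function_periodic}, using that $c_1$ and $\eta_*$ arise from exactly the same leading-order analysis as in finite depth (\Cref{prop:phi_properties} and \Cref{thm:existence_point_vortex}), now with $\mathbf{G}$ in place of $\Phi$ and the surface at $y=1$ in place of $y=0$.

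For $c_1$, the leading-order velocity is the horizontal component of $\nabla^\perp$ of the de-singularized part of $\mathbf{G}$ at the vortex, i.e. $c_1 = -[\mathbf{G}-\Gamma]_y(0,0)$ with $\Gamma$ the Newtonian potential of \Cref{prop:delta_vorticity}, precisely as $\tfrac{1}{4h}\cot(\pi\theta)$ was obtained in \Cref{prop:phi_properties}. First I would split $\mathbf{G}$ into the singular term $\tfrac{1}{4\pi}\log(\cosh(y/L)-\cos(x/L))$ and the image term $-\tfrac{1}{4\pi}\log(\cosh((y-2)/L)-\cos(x/L))$, which is smooth at the origin. Using the factorization $\cosh(y/L)-\cos(x/L)=2\lvert\sin(z/(2L))\rvert^2$ with $z=x+iy$, the singular term equals $\tfrac{1}{2\pi}\log\lvert\sin(z/(2L))\rvert+\tfrac{1}{4\pi}\log 2$, whose regular part is the real part of a holomorphic function whose derivative vanishes at $z=0$; hence it contributes nothing to $[\mathbf{G}-\Gamma]_y(0,0)$. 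Thus $c_1$ equals minus the $y$-derivative of the image term at the origin, which I would evaluate directly and simplify with $\sinh(2/L)=2\sinh(1/L)\cosh(1/L)$ and $\cosh(2/L)-1=2\sinh^2(1/L)$ to get $c_1=-\tfrac{1}{4\pi L}\coth(1/L)$.

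For $\eta_*$, the analogue of \Cref{thm:existence_point_vortex} gives $\eta_* = -(g-\alpha^2\partial_x^2)^{-1}\chi$ with $\chi = c_1\mathbf{G}_y(\cdot,1)+\tfrac12\mathbf{G}_y(\cdot,1)^2$, so I need the trace $\mathbf{G}_y(\cdot,1)$ and its square. The key simplification is that at $y=1$ the two logarithmic terms share the denominator $\cosh(1/L)-\cos(x/L)$, giving the closed form $\mathbf{G}_y(x,1)=\tfrac{\sinh(1/L)}{2\pi L}\,\tfrac{1}{\cosh(1/L)-\cos(x/L)}$. I would then apply the Poisson-kernel expansion $\tfrac{1}{\cosh a-\cos\phi}=\tfrac{1}{\sinh a}\bigl(1+2\sum_{n\ge 1}e^{-na}\cos(n\phi)\bigr)$ with $a=1/L$ and $\phi=x/L$, which collapses to $\mathbf{G}_y(x,1)=\tfrac{1}{2\pi L}\bigl(1+2\sum_{n\ge 1}e^{-n/L}\cos(nx/L)\bigr)$.

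Finally I would insert this series into $\chi$, expand the square by a Cauchy product, and collect the coefficient of $\cos(nx/L)$. Writing $p=e^{-1/L}$ and $c_1=-\tfrac{1+p^2}{4\pi L(1-p^2)}$, the three contributions—from $c_1\mathbf{G}_y(\cdot,1)$, from the linear part of $\tfrac12\mathbf{G}_y(\cdot,1)^2$, and from its quadratic part—conspire so that the bracketed factor reduces to $n/2$, yielding the single coefficient $\tfrac{n}{4\pi^2 L^2}e^{-n/L}$, while the constant terms cancel exactly (consistent with $\eta_*$ having zero mean). Applying the Fourier multiplier $(g-\alpha^2\partial_x^2)^{-1}$, which sends $\cos(nx/L)$ to $(g+\alpha^2 n^2/L^2)^{-1}\cos(nx/L)$, then produces the claimed series. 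I expect this last coefficient-collection step—in particular computing the Fourier coefficients of the $S^2$ term and verifying the cancellations that leave the clean factor $n$—to be the main obstacle; the rest is a direct differentiation and a standard summation.
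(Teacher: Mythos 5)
Your proof is correct, but it takes a genuinely different route from the paper on both parts. For $c_1$, the paper never differentiates the explicit formula for $\mathbf{G}$: it reuses the conformal map $f$ from \Cref{eq:infinite_depth_conformal_map} and quotes the final part of \Cref{thm:greens_functions}, namely $c_1 = \frac{i}{4\pi}\overline{(f''(0)/f'(0))}$. You instead isolate the logarithmic singularity through $\cosh(y/L)-\cos(x/L)=2\abs{\sin(z/(2L))}^2$, note that the regular part of that term has vanishing gradient at the origin (evenness of $\sin w/w$), and evaluate the image term directly; the half-angle identities then give $-\tfrac{1}{4\pi L}\coth(1/L)$, as claimed. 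For $\eta_*$, the paper's starting point is \Cref{eq:leading_order_surface_infinite_depth}, which carries a mean-subtraction term that you dropped; the paper disposes of it by exhibiting the closed form of $\chi$ and its periodic elementary antiderivative, and then obtains the Fourier coefficients of $\chi$ by expanding that antiderivative in a geometric series and differentiating termwise---so no squaring of series is ever needed. Your route (Poisson-kernel expansion of the trace $\mathbf{G}_y(\cdot,1)$, then a Cauchy product) is computationally heavier but sound: with $p=e^{-1/L}$ the convolution coefficient is $\sum_{k\in\Z}p^{\abs{k}+\abs{n-k}}=p^n\bigl(n+\tfrac{1+p^2}{1-p^2}\bigr)$, and combining this with $c_1=-\tfrac{1+p^2}{4\pi L(1-p^2)}$ leaves exactly $np^n$ for $n\geq 1$ and $0$ for $n=0$. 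That vanishing constant term is precisely the zero-mean statement the paper proves via the antiderivative, so it retroactively legitimizes your omission of the mean-subtraction term; starting from \Cref{eq:leading_order_surface_infinite_depth} and observing that your computation kills the correction would make the logic airtight. In exchange for the extra computation, your argument is self-contained given \Cref{prop:stream_function_periodic} (it never needs the conformal map), and it yields the Fourier series of $\chi$ and its zero mean in one stroke.
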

    \begin{proof}
        Recall how the wave velocity appeared on the right-hand side of \Cref{eq:phi_gradient}. By using the final part of \Cref{thm:greens_functions}, we find
        \[
            c_1 =\frac{i}{4\pi}\overline{\left(\frac{f''(0)}{f'(0)}\right)} = - \frac{1}{4\pi L}\coth(1/L),
        \]
        where $f$ is the conformal map introduced in \Cref{eq:infinite_depth_conformal_map} in the proof of \Cref{prop:stream_function_periodic}.
        
        We now move to the surface profile. From \cite{Shatah2013} we know that
        \begin{equation}
            \label{eq:leading_order_surface_infinite_depth}
            \eta_* = -(g-\alpha^2\partial_x^2)^{-1}\left(\chi - \frac{1}{2\pi L}\int_{-\pi L}^{\pi L} \chi\,d\mu\right),
        \end{equation}
        where $\chi$ is defined by
        \[
            \chi(x) \coloneqq c_1 \mathbf{G}_y(x,1) + \frac{1}{2}\mathbf{G}_y(x,1)^2.
        \]
        Written out, we have
        \[
            \chi(x) =\frac{1}{8\pi^2 L^2} \frac{\cosh(1/L)\cos(x/L)-1}{(\cos(x/L)-\cosh(1/L))^2}
        \]
        with the elementary antiderivative
        \[
            \ad{\chi}{1}(x)=-\frac{1}{8\pi^2 L} \frac{\sin(x/L)}{\cos(x/L)-\cosh(1/L)}.
        \]
        In particular, this means that
        \[
            \int_{-\pi L}^{\pi L} \chi\,d\mu = \ad{\chi}{1}(\pi L)-\ad{\chi}{1}(-\pi L)=0,
        \]
        so that \Cref{eq:leading_order_surface_infinite_depth} reduces to
        \begin{equation}
            \label{eq:leading_order_surface_infinite_depth_reduced}
            \eta_* = -(g-\alpha^2\partial_x^2)^{-1}\chi.
        \end{equation}
        
        In order to find the Fourier series for $\eta_*$, we require the Fourier series of $\chi$. We may write
        \[
            \ad{\chi}{1}(x)=\frac{1}{i}\left[1+\frac{e^{-ix/L-1/L}}{1-e^{-ix/L-1/L}} - \frac{1}{1-e^{ix/L-1/L}}\right],
        \]
        which, by expanding into geometric series, means that
        \[
            \ad{\chi}{1}(x) = i\sum_{n=-\infty}^\infty \sgn(n)e^{-\abs{n}/L}e^{inx/L}.
        \]
        Hence, by termwise differentiation, we obtain
        \[
            \chi(x) = \frac{1}{4\pi^2 L^2}\sum_{n=1}^\infty n e^{-n/L} \cos(n x /L),
        \]
        which, combined with \Cref{eq:leading_order_surface_infinite_depth_reduced}, yields the result.
    \end{proof}
    
    One may note that
    \[
        c_1 = -\frac{1}{4\pi} + O(1/L^2)
    \]
    as $L \to \infty$, which agrees with the speed of the solitary waves on infinite depth. When $L$ is large, the surface profile is very similar to the surface in the localized case, see \Cref{subfig:periodic_surface_long_waves}. At the other extreme, the first terms in the Fourier series will dominate.
    
    \begin{figure}[htb]
        \centering
        \begin{subfigure}[b]{0.495\linewidth}
            \includegraphics{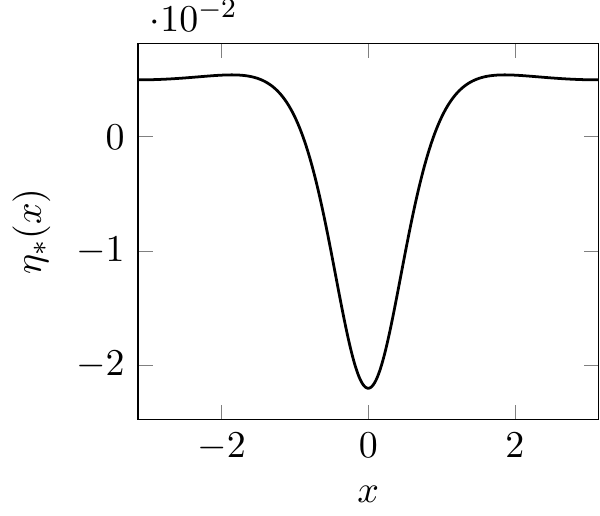}
            \caption{$L=1$ (Whole period)}
            \label{subfig:periodic_surface_short_waves}
        \end{subfigure}
        \begin{subfigure}[b]{0.495\linewidth}
            \includegraphics{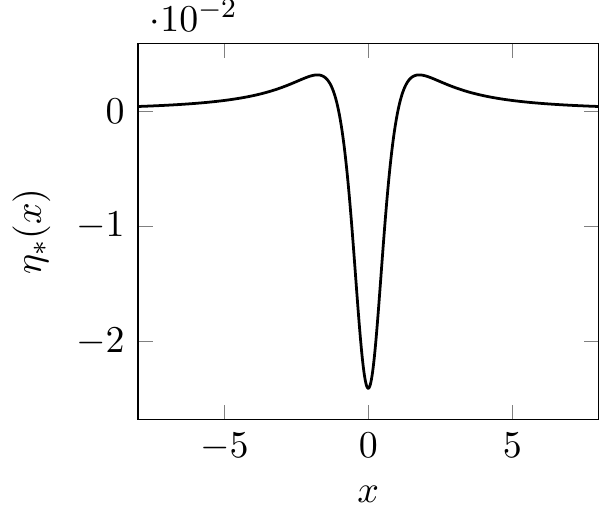}
            \caption{$L=100$ (Only part of a period)}
            \label{subfig:periodic_surface_long_waves}
        \end{subfigure}
        \caption{The leading order surface profile term, $\eta_*$, when $g=1, \alpha^2=0.01$, cf. \cite[Figure 1]{Shatah2013}.}
        \label{fig:periodic_surface}
    \end{figure}
    
\appendix
\section{}
    \label{appendix}
    In this appendix, we provide two theorems that are used in order to get exact expressions for the rotational part of the stream function. Except for the final part, \Cref{thm:greens_functions} is a standard result \cite[p. 166]{Markushevich1965a}. \Cref{thm:greens_functions_mixed} is a less well known extension of \Cref{thm:greens_functions}.
    \begin{theorem}[Green's functions in $\R^2$]
        \label{thm:greens_functions}
        Suppose that $\Omega \subsetneq \R^2$ is a simply connected domain and that $z_0 \in \Omega$. Furthermore, suppose that $\map{f}{\Omega}{\mathbb{D}}$ is a bijective conformal map onto the open unit disk, extending continuously to a function $\overline{\Omega} \to \overline{\mathbb{D}}$ and satisfying $f(z_0)=0$. Then the function $\map{\varphi}{\Omega}{\R}$ defined by
        \[
            \varphi(z) \coloneqq \frac{1}{2\pi}\log(|f(z)|)
        \]
        is in $L_\textnormal{loc}^1(\Omega)$, extends continuously to the boundary of $\Omega$, and satisfies
        \begin{align*}
            \Delta \varphi &= \delta_{z_0},\\
            \restr{\varphi}{\partial \Omega} &= 0.
        \end{align*}
        Furthermore, the harmonic function $h$ defined by
        \[
            h(z) \coloneqq \varphi(z) - \frac{1}{2\pi} \log{(|z-z_0|)}
        \]
        satisfies
        \[
            \nabla h (z_0) = \frac{1}{4\pi} \overline{\left(\frac{f''(z_0)}{f'(z_0)}\right)}
        \]
        after identifying $\R^2$ and $\C$ via $(x,y) \mapsto x+iy$.
    \end{theorem}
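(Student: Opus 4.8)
The plan is to reduce every claim to the factorization of $f$ at its unique zero. Since $\map{f}{\Omega}{\mathbb{D}}$ is a conformal bijection with $f(z_0)=0$, the zero at $z_0$ is simple (because $f'(z_0)\neq 0$) and is the only zero of $f$ in $\Omega$ (because $0$ has a single preimage). I would therefore write $f(z)=(z-z_0)g(z)$ with $\map{g}{\Omega}{\C}$ holomorphic and nowhere vanishing, so that
\[
    \varphi(z)=\frac{1}{2\pi}\log\abs{z-z_0}+\frac{1}{2\pi}\log\abs{g(z)}=\Gamma(z-z_0)+h(z),
\]
where I use $\Gamma(z)=\frac{1}{2\pi}\log\abs{z}$ from \Cref{prop:delta_vorticity} and set $h\coloneqq\frac{1}{2\pi}\log\abs{g}$. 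Because $g$ is holomorphic and nonvanishing, $\log g$ exists locally and $h=\re\bigl(\frac{1}{2\pi}\log g\bigr)$ is harmonic on all of $\Omega$; this is precisely the function $h$ appearing in the statement.

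With this decomposition the distributional and boundary claims become routine. Local integrability follows because $\Gamma(\cdot-z_0)$ is locally integrable (its only singularity is the integrable logarithm at $z_0$) while $h$ is continuous, and $\Delta\varphi=\Delta\Gamma(\cdot-z_0)+\Delta h=\delta_{z_0}$ by \Cref{prop:delta_vorticity} together with harmonicity of $h$. For the boundary condition I would first record the standard boundary-correspondence fact that $\abs{f}=1$ on $\partial\Omega$: if some $z\in\partial\Omega$ had $f(z)=w_0$ with $\abs{w_0}<1$, then taking $z_n\to z$ in $\Omega$ and using continuity of the holomorphic inverse $\map{f^{-1}}{\mathbb{D}}{\Omega}$ would give $z_n=f^{-1}(f(z_n))\to f^{-1}(w_0)\in\Omega$, forcing $z\in\Omega$, a contradiction. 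Since $\abs{f}=1$ on $\partial\Omega$, the extended $\log\abs{f}$ is continuous and vanishes there.

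The genuinely new content is the gradient formula, and this is where I would concentrate. The idea is to pass from the real harmonic function $h$ to the holomorphic function $F\coloneqq\frac{1}{2\pi}\log g$, of which it is the real part, and exploit the identification of $\R^2$ with $\C$. Writing $F=u+iv$, the Cauchy--Riemann equations give $F'=u_x-iu_y$, so that $\overline{F'}=u_x+iu_y$ corresponds exactly to the gradient $\nabla u=(u_x,u_y)$ under $(x,y)\mapsto x+iy$. Hence $\nabla h(z_0)=\overline{F'(z_0)}$, and it remains to evaluate $F'(z_0)=\frac{1}{2\pi}\,g'(z_0)/g(z_0)$. Taylor expanding $f(z)=f'(z_0)(z-z_0)+\tfrac12 f''(z_0)(z-z_0)^2+\cdots$ and dividing by $(z-z_0)$ yields $g(z_0)=f'(z_0)$ and $g'(z_0)=\tfrac12 f''(z_0)$, whence
\[
    \nabla h(z_0)=\overline{F'(z_0)}=\frac{1}{4\pi}\overline{\left(\frac{f''(z_0)}{f'(z_0)}\right)},
\]
as claimed.

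I expect the only real subtlety to be bookkeeping rather than any deep difficulty: one must correctly track the complex conjugate and the factor $\tfrac12$ from the expansion of $g$ (which is what turns $\frac{1}{2\pi}$ into $\frac{1}{4\pi}$) in the gradient identification. Once the factorization $f=(z-z_0)g$ is in place, the distributional identity is immediate from \Cref{prop:delta_vorticity}, and the boundary behavior is the standard conformal-mapping argument sketched above.
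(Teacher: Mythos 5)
Your proof is correct and follows essentially the same route as the paper's: the factorization $f(z)=(z-z_0)g(z)$ with $g$ holomorphic and nonvanishing, the decomposition $\varphi = \Gamma(\cdot-z_0)+\re\bigl(\tfrac{1}{2\pi}\log g\bigr)$ combined with \Cref{prop:delta_vorticity}, and the Cauchy--Riemann/Taylor computation giving $g(z_0)=f'(z_0)$, $g'(z_0)=\tfrac12 f''(z_0)$ are exactly the published argument. The only difference is that you spell out the boundary-correspondence step ($\abs{f}=1$ on $\partial\Omega$, via continuity of $f^{-1}$), which the paper asserts as immediate.
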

    \begin{proof}
        We first check the boundary values of the function $\varphi$. By assumption, $f$ extends continuously to $\partial \Omega$, and every point on $\partial \Omega$ must necessarily be mapped to the unit circle. It is thus immediate that $\varphi$ also extends continuosly to the boundary, and moreover, vanishes there.
        
        Identify now $\R^2$ and $\C$. Observe that since $f(z_0)=0$, we have
        \[
            f(z)=g(z)(z-z_0),\quad z \in \Omega
        \]
        for some holomorphic function $g$, where $|g|>0$. Indeed, we must have $g(z_0)=f'(z_0) \neq 0$ because $f$ is injective, and the injectivity of $f$ also ensures that there can be no other roots. Thus
        \[
            \varphi(z) = \frac{1}{2\pi}\log{(\abs{z-z_0})} + h(z),
        \]
        where
        \[
            h(z) \coloneqq \frac{1}{2\pi}\re\log{(g(z))}
        \]
        is harmonic by $|g|>0$ and the Cauchy-Riemann equations. Hence, by \Cref{prop:delta_vorticity}, the function $\varphi$ is $L_\textnormal{loc}^1$ and satisfies
        \[
            \Delta \varphi = \delta_{z_0}.
        \]
        
        The last assertion follows by observing that one must necessarily have $g'(z_0) = \frac{1}{2}f''(z_0)$, meaning that
        \[
            \left(\frac{1}{2\pi} \log(g(\cdot))\right)'(z_0) = \frac{1}{2\pi} \frac{g'(z_0)}{g(z_0)} = \frac{1}{4\pi} \frac{f''(z_0)}{f'(z_0)},
        \]
        whence we deduce from the Cauchy-Riemann equations that
        \[
            \nabla h (z_0) = \frac{1}{4\pi} \overline{\left(\frac{f''(z_0)}{f'(z_0)}\right)}. \qedhere
        \]
    \end{proof}

    \begin{theorem}[Green's functions in $\R^2$, mixed]
        \label{thm:greens_functions_mixed}
        Suppose that $\Omega \subsetneq \R^2$ is a simply connected domain and that $z_0 \in \Omega$. Furthermore, assume that $\partial \Omega = \Gamma_D \sqcup \Gamma_N$, where $\Gamma_N$ is $C^1$ and open in $\partial \Omega$. Finally, suppose that $\map{f}{\Omega}{\mathbb{D} \setminus ((-1,-a]\times \{0\})}$, where $a > 0$, is a bijective conformal map of $\Omega$ onto the unit disk with a slit, satisfying $f(z_0)=0$ and extending continuously to the boundary. This map should send $\Gamma_D$ to the unit circle and $\Gamma_N$ to the interval $(-1,a] \times \{0\}$, and should extend analytically across $\Gamma_N$ (when viewed as a map on $\C$). Then the function $\map{\varphi}{\Omega}{\R}$ defined by
        \[
            \varphi(z) \coloneqq \frac{1}{2\pi} \log{(\abs{f(z)})}
        \]
        is in $L_\textnormal{loc}^1(\Omega)$, extends continuously to the boundary and satisfies
        \begin{align*}
            \Delta \varphi &= \delta_{z_0},\\
            \restr{\varphi}{\Gamma_D} &= 0,\\
            \restr{\partial_n \varphi}{\Gamma_N} &= 0,
        \end{align*}
        where $\partial_n$ denotes the normal derivative.
    \end{theorem}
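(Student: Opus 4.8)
The plan is to reduce the first three conclusions to the argument already given for \Cref{thm:greens_functions}, and to treat the Neumann condition on $\Gamma_N$ as the single genuinely new ingredient. For the first part I would argue verbatim as before: since $f$ is a bijection with $f(z_0)=0$ and $0$ lies in the slit disk, $f$ has a single, simple zero at $z_0$, so $f(z)=(z-z_0)g(z)$ for a holomorphic, nonvanishing $g$ on $\Omega$ (with $g(z_0)=f'(z_0)\neq 0$ by injectivity). This yields
\[
    \varphi(z) = \frac{1}{2\pi}\log\abs{z-z_0} + \frac{1}{2\pi}\re\log g(z),
\]
in which the second term is harmonic; hence $\varphi \in L^1_{\textnormal{loc}}(\Omega)$ and $\Delta\varphi = \delta_{z_0}$ by \Cref{prop:delta_vorticity}, exactly as in \Cref{thm:greens_functions}. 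The continuous extension to $\partial\Omega$ and the vanishing $\restr{\varphi}{\Gamma_D}=0$ follow in the same way, since $f$ maps $\Gamma_D$ into the unit circle, where $\abs{f}=1$ and therefore $\log\abs{f}=0$.

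The key new observation is that $\varphi$ is the real part of the locally defined holomorphic function $F \coloneqq \frac{1}{2\pi}\log f$, whose harmonic conjugate is $\psi \coloneqq \frac{1}{2\pi}\arg f$. I would exploit that $f$ extends analytically across $\Gamma_N$ and maps $\Gamma_N$ into the real axis on which the slit lies, and that $f$ does not vanish there (the slit excludes the origin). Consequently, near any interior point $p\in\Gamma_N$ a continuous branch of $F=\varphi+i\psi$ is holomorphic up to and across $\Gamma_N$, and because $f(\Gamma_N)$ lies on a single ray from the origin, the conjugate $\psi$ is constant along $\Gamma_N$. To convert this into the vanishing of the normal derivative, write $T$ for a unit tangent and $N$ for the associated unit normal of the $C^1$ curve $\Gamma_N$; the Cauchy--Riemann equations for $F=\varphi+i\psi$ then give the standard identity $\partial_n\varphi = \pm\,\partial_t\psi$ along $\Gamma_N$ (the sign depending on the chosen orientation of the normal). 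Since the tangential derivative of the locally constant function $\psi$ vanishes, we obtain $\restr{\partial_n\varphi}{\Gamma_N}=0$, as claimed.

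I expect the main obstacle to be the boundary-regularity bookkeeping rather than any hard estimate. One must ensure that $\psi$ and its tangential derivative are meaningful up to $\Gamma_N$: this is precisely what the hypothesis that $f$ extend analytically across $\Gamma_N$, together with $f\neq 0$ there, provides, as it renders $F$, and hence $\varphi$ and $\psi$, real-analytic in a full two-sided neighborhood of each interior point of $\Gamma_N$, so that the $C^1$ hypothesis on $\Gamma_N$ suffices to define $\partial_t\psi$ and $\partial_n\varphi$ as boundary values. Since the assertion is purely local and pointwise, the two-sided nature of the slit causes no trouble: approaching the slit from above gives $\arg f\to\pi$ and from below $\arg f\to-\pi$, and on each side $\psi$ is separately constant. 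Finally, the tip of the slit and the endpoints of $\Gamma_N$ need not be addressed, because $\Gamma_N$ is open in $\partial\Omega$ and the Neumann condition is required only on $\Gamma_N$ itself.
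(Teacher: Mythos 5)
Your proposal is correct and takes essentially the same approach as the paper: the paper's proof consists of the single remark that everything except the Neumann condition carries over verbatim from \Cref{thm:greens_functions}, and that $\restr{\partial_n \varphi}{\Gamma_N} = 0$ ``follows by using conformality.'' Your harmonic-conjugate/Cauchy--Riemann argument (with $\psi = \tfrac{1}{2\pi}\arg f$ locally constant along $\Gamma_N$) is exactly the detailed content behind that one-line remark.
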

    \begin{proof}
        The only change from \Cref{thm:greens_functions} is checking that the normal derivative vanishes on $\Gamma_N$. This follows by using conformality.
    \end{proof}
    
% \bib, bibdiv, biblist are defined by the amsrefs package.
\begin{bibdiv}
    \begin{biblist}
        
        \bib{Babenko1987}{article}{
            author={Babenko, K.~I.},
            title={Some remarks on the theory of surface waves of finite amplitude},
            date={1987},
            journal={Dokl. Akad. Nauk SSSR},
            volume={294},
            number={5},
            pages={1033\ndash 1037},
        }
        
        \bib{Burton2011}{article}{
            author={Burton, G.~R.},
            author={Toland, J.~F.},
            title={Surface waves on steady perfect-fluid flows with vorticity},
            date={2011},
            journal={Comm. Pure Appl. Math.},
            volume={64},
            number={7},
            pages={975\ndash 1007},
        }
        
        \bib{Constantin2014}{article}{
            author={{Constantin}, A.},
            author={{Strauss}, W.},
            author={{Varvaruca}, E.},
            title={Global bifurcation of steady gravity water waves with critical
                layers},
            date={2014},
            journal={preprint},
            eprint={arXiv:1407.0092},
        }
        
        \bib{Constantin2001}{article}{
            author={Constantin, Adrian},
            title={On the deep water wave motion},
            date={2001},
            journal={J. Phys. A},
            volume={34},
            number={7},
            pages={1405\ndash 1417},
        }
        
        \bib{Constantin2011}{book}{
            author={Constantin, Adrian},
            title={Nonlinear water waves with applications to wave-current
                interactions and tsunamis},
            series={CBMS-NSF Regional Conference Series in Applied Mathematics},
            publisher={Society for Industrial and Applied Mathematics (SIAM),
                Philadelphia, PA},
            date={2011},
            volume={81},
        }
        
        \bib{Constantin2007}{article}{
            author={Constantin, Adrian},
            author={Ehrnstr{\"o}m, Mats},
            author={Wahl{\'e}n, Erik},
            title={Symmetry of steady periodic gravity water waves with vorticity},
            date={2007},
            journal={Duke Math. J.},
            volume={140},
            number={3},
            pages={591\ndash 603},
        }
        
        \bib{Constantin2004}{article}{
            author={Constantin, Adrian},
            author={Strauss, Walter},
            title={Exact steady periodic water waves with vorticity},
            date={2004},
            journal={Comm. Pure Appl. Math.},
            volume={57},
            number={4},
            pages={481\ndash 527},
        }
        
        \bib{Constantin2011b}{article}{
            author={Constantin, Adrian},
            author={Strauss, Walter},
            title={Periodic traveling gravity water waves with discontinuous
                vorticity},
            date={2011},
            journal={Arch. Ration. Mech. Anal.},
            volume={202},
            number={1},
            pages={133\ndash 175},
        }
        
        \bib{Constantin2011a}{article}{
            author={Constantin, Adrian},
            author={Varvaruca, Eugen},
            title={Steady periodic water waves with constant vorticity: regularity
                and local bifurcation},
            date={2011},
            journal={Arch. Ration. Mech. Anal.},
            volume={199},
            number={1},
            pages={33\ndash 67},
        }
        
        \bib{Craig1993}{article}{
            author={Craig, W.},
            author={Sulem, C.},
            title={Numerical simulation of gravity waves},
            date={1993},
            journal={J. Comput. Phys.},
            volume={108},
            number={1},
            pages={73\ndash 83},
        }
        
        \bib{Craig1992}{article}{
            author={Craig, W.},
            author={Sulem, C.},
            author={Sulem, P.-L.},
            title={Nonlinear modulation of gravity waves: a rigorous approach},
            date={1992},
            journal={Nonlinearity},
            volume={5},
            number={2},
            pages={497\ndash 522},
        }
        
        \bib{Crandall1971}{article}{
            author={Crandall, Michael~G.},
            author={Rabinowitz, Paul~H.},
            title={Bifurcation from simple eigenvalues},
            date={1971},
            journal={J. Functional Analysis},
            volume={8},
            pages={321\ndash 340},
        }
        
        \bib{Deny1953--54}{article}{
            author={Deny, J.},
            author={Lions, J.~L.},
            title={Les espaces du type de {B}eppo {L}evi},
            date={1953--54},
            journal={Ann. Inst. Fourier, Grenoble},
            volume={5},
            pages={305\ndash 370},
        }
        
        \bib{Dubreil-Jacotin1934}{article}{
            author={Dubreil-Jacotin, M.-L.},
            title={Sur la d{\'e}termination rigoureuse des ondes permanentes
                p{\'e}riodiques d'ampleur finie},
            date={1934},
        }
        
        \bib{Ehrnstrom2012}{article}{
            author={Ehrnstr{\"o}m, Mats},
            author={Escher, Joachim},
            author={Villari, Gabriele},
            title={Steady water waves with multiple critical layers: interior
                dynamics},
            date={2012},
            journal={J. Math. Fluid Mech.},
            volume={14},
            number={3},
            pages={407\ndash 419},
        }
        
        \bib{Ehrnstrom2011}{article}{
            author={Ehrnstr{\"o}m, Mats},
            author={Escher, Joachim},
            author={Wahl{\'e}n, Erik},
            title={Steady water waves with multiple critical layers},
            date={2011},
            journal={SIAM J. Math. Anal.},
            volume={43},
            number={3},
            pages={1436\ndash 1456},
        }
        
        \bib{Ehrnstroem2009}{article}{
            author={Ehrnstr{\"o}m, Mats},
            author={Holden, Helge},
            author={Raynaud, Xavier},
            title={Symmetric waves are traveling waves},
            date={2009},
            journal={Int. Math. Res. Not. IMRN},
            number={24},
            pages={4578\ndash 4596},
        }
        
        \bib{Ehrnstrom2008}{article}{
            author={Ehrnstr{\"o}m, Mats},
            author={Villari, Gabriele},
            title={Linear water waves with vorticity: rotational features and
                particle paths},
            date={2008},
            journal={J. Differential Equations},
            volume={244},
            number={8},
            pages={1888\ndash 1909},
        }
        
        \bib{Ehrnstroem2015}{article}{
            author={Ehrnstr{\"o}m, Mats},
            author={Wahl{\'e}n, Erik},
            title={Trimodal {S}teady {W}ater {W}aves},
            date={2015},
            journal={Arch. Ration. Mech. Anal.},
            volume={216},
            number={2},
            pages={449\ndash 471},
        }
        
        \bib{Escher2011}{article}{
            author={Escher, Joachim},
            author={Matioc, Anca-Voichita},
            author={Matioc, Bogdan-Vasile},
            title={On stratified steady periodic water waves with linear density
                distribution and stagnation points},
            date={2011},
            journal={J. Differential Equations},
            volume={251},
            number={10},
            pages={2932\ndash 2949},
        }
        
        \bib{Gamelin2001}{book}{
            author={Gamelin, Theodore~W.},
            title={Complex analysis},
            series={Undergraduate Texts in Mathematics},
            publisher={Springer-Verlag, New York},
            date={2001},
        }
        
        \bib{Gerstner1809}{article}{
            author={Gerstner, Franz},
            title={Theorie der {W}ellen},
            date={1809},
            journal={Annalen Der Physik},
            volume={32},
            pages={412\ndash 445},
        }
        
        \bib{Gilbarg2001}{book}{
            author={Gilbarg, David},
            author={Trudinger, Neil~S.},
            title={Elliptic partial differential equations of second order},
            series={Classics in Mathematics},
            publisher={Springer-Verlag, Berlin},
            date={2001},
        }
        
        \bib{Groves2008}{article}{
            author={Groves, M.~D.},
            author={Wahl{\'e}n, E.},
            title={Small-amplitude {S}tokes and solitary gravity water waves with an
                arbitrary distribution of vorticity},
            date={2008},
            journal={Phys. D},
            volume={237},
            number={10-12},
            pages={1530\ndash 1538},
        }
        
        \bib{Henry2014}{article}{
            author={Henry, David},
            author={Matioc, Anca-Vocihita},
            title={Global bifurcation of capillary--gravity-stratified water waves},
            date={2014},
            journal={Proc. Roy. Soc. Edinburgh Sect. A},
            volume={144},
            number={4},
            pages={775\ndash 786},
        }
        
        \bib{Hirt2013}{article}{
            author={Hirt, Christian},
            author={Claessens, Sten},
            author={Fecher, Thomas},
            author={Kuhn, Michael},
            author={Pail, Roland},
            author={Rexer, Moritz},
            title={New ultrahigh-resolution picture of earth's gravity field},
            date={2013},
            journal={Geophysical Research Letters},
            volume={40},
            number={16},
            pages={4279\ndash 4283},
        }
        
        \bib{Hur2006}{article}{
            author={Hur, Vera~Mikyoung},
            title={Global bifurcation theory of deep-water waves with vorticity},
            date={2006},
            journal={SIAM J. Math. Anal.},
            volume={37},
            number={5},
            pages={1482\ndash 1521 (electronic)},
        }
        
        \bib{Hur2008}{article}{
            author={Hur, Vera~Mikyoung},
            title={Symmetry of solitary water waves with vorticity},
            date={2008},
            journal={Math. Res. Lett.},
            volume={15},
            number={3},
            pages={491\ndash 509},
        }
        
        \bib{Johnson1997}{book}{
            author={Johnson, R.~S.},
            title={A modern introduction to the mathematical theory of water waves},
            series={Cambridge Texts in Applied Mathematics},
            publisher={Cambridge University Press, Cambridge},
            date={1997},
        }
        
        \bib{Lannes2013}{book}{
            author={Lannes, David},
            title={The water waves problem},
            series={Mathematical Surveys and Monographs},
            publisher={American Mathematical Society},
            address={Providence, RI},
            date={2013},
            volume={188},
        }
        
        \bib{Lighthill1978}{book}{
            author={Lighthill, James},
            title={Waves in fluids},
            publisher={Cambridge University Press, Cambridge-New York},
            date={1978},
        }
        
        \bib{Marchioro1994}{book}{
            author={Marchioro, Carlo},
            author={Pulvirenti, Mario},
            title={Mathematical theory of incompressible nonviscous fluids},
            series={Applied Mathematical Sciences},
            publisher={Springer-Verlag},
            address={New York},
            date={1994},
            volume={96},
        }
        
        \bib{Mardare2008}{article}{
            author={Mardare, Sorin},
            title={On {P}oincar\'e and de {R}ham's theorems},
            date={2008},
            journal={Rev. Roumaine Math. Pures Appl.},
            volume={53},
            number={5-6},
            pages={523\ndash 541},
        }
        
        \bib{Markushevich1965}{book}{
            author={Markushevich, A.~I.},
            title={Theory of functions of a complex variable. {V}ol. {I}},
            publisher={Prentice-Hall, Inc., Englewood Cliffs, N.J.},
            date={1965},
        }
        
        \bib{Markushevich1965a}{book}{
            author={Markushevich, A.~I.},
            title={Theory of functions of a complex variable. {V}ol. {II}},
            publisher={Prentice-Hall, Inc., Englewood Cliffs, N.J.},
            date={1965},
        }
        
        \bib{Matioc2014}{article}{
            author={Matioc, Bogdan-Vasile},
            title={Global bifurcation for water waves with capillary effects and
                constant vorticity},
            date={2014},
            journal={Monatsh. Math.},
            volume={174},
            number={3},
            pages={459\ndash 475},
        }
        
        \bib{Mei1984}{article}{
            author={Mei, Chiang~C.},
            title={The applied dynamics of ocean surface waves},
            date={1984},
            journal={Ocean Engineering},
            volume={11},
            number={3},
            pages={321\ndash },
        }
        
        \bib{Nekrasov1921}{article}{
            author={Nekrasov, AI},
            title={On steady waves},
            date={1921},
            journal={Izv. Ivanovo-Voznesensk. Politekhn. In-ta},
            volume={3},
        }
        
        \bib{Plotnikov2004}{article}{
            author={Plotnikov, P.~I.},
            author={Toland, J.~F.},
            title={Convexity of {S}tokes waves of extreme form},
            date={2004},
            journal={Arch. Ration. Mech. Anal.},
            volume={171},
            number={3},
            pages={349\ndash 416},
        }
        
        \bib{Runst1996}{book}{
            author={Runst, Thomas},
            author={Sickel, Winfried},
            title={Sobolev spaces of fractional order, {N}emytskij operators, and
                nonlinear partial differential equations},
            series={de Gruyter Series in Nonlinear Analysis and Applications},
            publisher={Walter de Gruyter \& Co.},
            address={Berlin},
            date={1996},
            volume={3},
        }
        
        \bib{Shatah2013}{article}{
            author={Shatah, Jalal},
            author={Walsh, Samuel},
            author={Zeng, Chongchun},
            title={Travelling water waves with compactly supported vorticity},
            date={2013},
            journal={Nonlinearity},
            volume={26},
            number={6},
            pages={1529\ndash 1564},
        }
        
        \bib{Shatah2008}{article}{
            author={Shatah, Jalal},
            author={Zeng, Chongchun},
            title={Geometry and a priori estimates for free boundary problems of the
                {E}uler equation},
            date={2008},
            journal={Comm. Pure Appl. Math.},
            volume={61},
            number={5},
            pages={698\ndash 744},
        }
        
        \bib{Toland1996}{article}{
            author={Toland, J.~F.},
            title={Stokes waves},
            date={1996},
            journal={Topol. Methods Nonlinear Anal.},
            volume={7},
            number={1},
            pages={1\ndash 48},
        }
        
        \bib{Vanden-Broeck1996}{article}{
            author={Vanden-Broeck, J-M},
            title={Periodic waves with constant vorticity in water of infinite
                depth},
            date={1996},
            journal={IMA J. Appl. Math.},
            volume={56},
            number={3},
            pages={207\ndash 217},
        }
        
        \bib{Varholm2014}{thesis}{
            author={Varholm, K.},
            title={Water waves with compactly supported vorticity},
            type={Master's Thesis},
            date={2014},
        }
        
        \bib{Wahlen2009}{article}{
            author={Wahl{\'e}n, Erik},
            title={Steady water waves with a critical layer},
            date={2009},
            journal={J. Differential Equations},
            volume={246},
            number={6},
            pages={2468\ndash 2483},
        }
        
        \bib{Walsh2009}{article}{
            author={Walsh, Samuel},
            title={Stratified steady periodic water waves},
            date={2009},
            journal={SIAM J. Math. Anal.},
            volume={41},
            number={3},
            pages={1054\ndash 1105},
        }
        
        \bib{Walsh2014a}{article}{
            author={Walsh, Samuel},
            title={Steady stratified periodic gravity waves with surface tension
                {II}: global bifurcation},
            date={2014},
            journal={Discrete Contin. Dyn. Syst.},
            volume={34},
            number={8},
            pages={3287\ndash 3315},
        }
        
        \bib{Zakharov1968}{article}{
            author={Zakharov, V.E.},
            title={Stability of periodic waves of finite amplitude on the surface of
                a deep fluid},
            date={1968},
            journal={Journal of Applied Mechanics and Technical Physics},
            volume={9},
            number={2},
            pages={190\ndash 194},
        }
        
    \end{biblist}
\end{bibdiv}

\end{document}